\newtheorem{thm}[equation]{Theorem}
\newtheorem{cor}[equation]{Corollary}
\newtheorem{defn}[equation]{Definition}
\newtheorem{lem}[equation]{Lemma}
\newtheorem{rem}[equation]{Remark}
\def\R{{\mathbb{R}}}
\def\N{{\mathbb{N}}}
\def\Z{{\mathbb{Z}}}
\newcommand{\supp}{\mbox{supp}}
\renewcommand{\a}{\alpha}
\renewcommand{\b}{\beta}
\newcommand{\g}{\gamma}
\renewcommand{\d}{\delta}
\newcommand{\la}{\lambda}
\newcommand{\eps}{\epsilon}
\newcommand{\e}{\varepsilon}
\renewcommand{\t}{\tau}
\newcommand{\te}{\theta}
\newcommand{\s}{\sigma}
\newcommand{\vp}{\varphi}
\newcommand{\8}{\infty}
\newcommand{\vt}{\vartheta}
\newcommand{\vr}{\varrho}
 \numberwithin{equation}{section}
\begin{document}

\title[] {Weak type $(1, 1)$ inequalities for discrete rough maximal functions}
\author[M.Mirek]
{Mariusz Mirek}
\address{M.Mirek \\
Universit\"{a}t Bonn \\
Mathematical Institute\\
Endenicher Allee 60\\
D--53115 Bonn \\
Germany \&\\
Uniwersytet Wroc{\l}awski\\
Mathematical Institute \\
Plac Grunwaldzki 2/4\\
 50--384 Wroc{\l}aw\\
 Poland}
 \email{mirek@math.uni.wroc.pl}

\thanks{
The author was supported by NCN grant DEC--2012/05/D/ST1/00053}
\maketitle

\begin{abstract}
The aim of this paper is to show that the discrete maximal function
$$\mathcal{M}_{h}f(x)=\sup_{N\in\mathbb{N}}\frac{1}{|\mathbf{N}_{h}\cap[1, N]|}\Big|\sum_{n\in
\mathbf{N}_{h}\cap[1, N]}f(x-n)\Big|,\ \ \mbox{for \ $x\in\mathbb{Z}$},$$
is of weak type $(1, 1)$, where $\mathbf{N}_{h}=\{n\in\mathbb{N}:
\exists_{m\in\mathbb{N}}\ n=\lfloor h(m)\rfloor\}$  for an appropriate function $h$.
As a consequence we also obtain pointwise ergodic theorem along the set $\mathbf{N}_{h}$.
\end{abstract}

\section{Introduction and statement of results}
Recently, Urban and Zienkiewicz in \cite{UZ} proved that the maximal function
\begin{align}\label{maximal:1}
  \mathcal{M}f(x)=\sup_{N\in\N}\frac{1}{N}\Big|\sum_{n=1}^Nf(x-a_n)\Big|,
\ \ \mbox{for \ $x\in\Z$},
\end{align}
is of weak type $(1, 1)$, for $a_n=\lfloor n^c\rfloor$ where $1<c<1.001$ giving a negative answer for Rosenblatt--Wierdl's conjecture -- for more details and the historical background we refer to \cite{RW} page 74. Not long afterwards, LaVictoire \cite{LaV} and Christ \cite{C1} provided some new examples of sequences $(a_n)_{n\in\N}$ having Banach density $0$, for which maximal function $\mathcal{M}f$ is of weak type of $(1, 1)$.

The main aim of this article is to study maximal functions
\begin{align}\label{max:1}
  \mathcal{M}_{h}f(x)=\sup_{N\in\N}\frac{1}{|\mathbf{N}_{h}\cap[1, N]|}\Big|\sum_{n\in
\mathbf{N}_{h}\cap[1, N]}f(x-n)\Big|,\ \ \mbox{for \ $x\in\Z$},
\end{align}
defined along subsets of integers $\mathbf{N}_{h}$ of the form
\begin{align}\label{def:N}
  \mathbf{N}_{h}=\{n\in\N:
\exists_{m\in\N}\ n=\lfloor h(m)\rfloor\},
\end{align}
where $h$ is an appropriate function, see Definition \ref{def:1}. We are going to consider such functions $h$ for which $\mathcal{M}_{h}f$ is of weak type $(1, 1)$ -- see Theorem \ref{thm:1max} below. Our motivation to study maximal functions \eqref{max:1} for arithmetic sets defined in \eqref{def:N} is that: on the one hand, we were inspired by the series of papers of Bourgain \cite{B1},
\cite{B2} and \cite{B3}  where he proved $\ell^p(\Z)$ -- boundedness $(p>1)$ of ergodic averages modeled on integer valued polynomials and the recent results of Buczolich and Mauldin \cite{BM} and LaVictoire \cite{LaV1}. They showed that the pointwise convergence of ergodic averages along $p(n)=n^k$ for $k\ge2$ fails on $L^1$. On the other hand, we did not know (apart from the example given in \cite{UZ}) any considerable examples of sequences (given by a concrete formula) for which $\mathcal{M}f$ is of weak type of $(1, 1)$. Similar problems were studied in \cite{BKQW} in the context of $L^p$ -- boundedness $(p>1)$ of ergodic averaging operators, but the case of $p=1$ remained unresolved until the results of \cite{B}, \cite{UZ}, \cite{LaV} and \cite{C1}.
Here we will make the first attempt at characterizing a class of functions $h$ for which maximal function in \eqref{max:1} is of weak type $(1, 1)$.

 The sets  $\mathbf{N}_{h}$,  considered as subsets of the set of prime numbers $\mathbf{P}$, have great importance in analytic number theory. Namely, in 1953  Piatetski--Shapiro  established an asymptotic formula for
$$\mathbf{P}_{\g}=\{p\in\mathbf{P}: \exists_{n\in\N}\
p=\lfloor n^{1/{\g}}\rfloor\}=\mathbf{N}_{x^{1/{\g}}}\cap\mathbf{P},$$
of fixed type $\g<1$ ($\g$ is sufficiently close to $1$). More precisely,  it was shown in \cite{PS} that
$$|\mathbf{P}_{\g}\cap[1, x]|\sim\frac{x^{\g}}{\log x}  \ \ \mbox{as \ $x\to\8$},$$ for every $\g\in(11/12, 1)$.
Recently, the author \cite{M} proved $\ell^p(\Z)$ -- boundedness $(p>1)$  of maximal functions modeled on  subsets of primes of the form $\mathbf{N}_{h}\cap\mathbf{P}$, for $h$ as in Definition \ref{def:1}. In \cite{M} we  have also obtained related pointwise ergodic theorems and showed that the ternary Goldbach problem has a solution in the primes belonging to $\mathbf{N}_{h}\cap\mathbf{P}$. On the other hand, in \cite{M1}, we proved a counterpart of Roth theorem for the Piatetski--Shapiro primes.

Throughout the paper we will use the convention that $C > 0$ stands for a large positive constant whose value may change from line to line. For two quantities $A>0$ and $B>0$ we say that $A\lesssim B$ ($A\gtrsim B$) if there exists an absolute constant $C>0$ such that $A\le CB$ ($A\ge CB$). If $A\lesssim B$ and $A\gtrsim B$ hold simultaneously then we will shortly write that $A\simeq B$. We will also write $A\lesssim_{\d} B$ ($A\gtrsim_{\d} B$) to indicate that the constant $C>0$ depends on some $\d>0$.

\begin{defn}\label{def:1}
Let $c\in(1, 2)$ and $\mathcal{F}_c$ be the family of all functions $h:[x_0, \8)\mapsto [1, \8)$ (for some $x_0\ge1$)  satisfying
\begin{enumerate}
\item[(i)] $h\in \mathcal{C}^3([x_0, \8))$ and
$$h'(x)>0,\ \ \ \ h''(x)>0, \ \ \mbox{for every \  $x\ge x_0$.}$$
\item[(ii)] There exists a real valued function $\vartheta\in\mathcal{C}^3([x_0, \8))$ and a constant $C_h>0$ such that
\begin{align}\label{eq1}
  h(x)=C_hx^c\ell_h(x), \ \ \mbox{where}\ \ \ell_h(x)=e^{\int_{x_0}^x\frac{\vt(t)}{t}dt}, \ \ \mbox{for every \  $x\ge x_0$,}
\end{align}
and
\begin{align}\label{eq2}
  \lim_{x\to\8}\vartheta(x)=0,\ \ \lim_{x\to\8}x\vartheta'(x)=0,\ \ \lim_{x\to\8}x^2\vartheta''(x)=0\ \ \lim_{x\to\8}x^3\vartheta'''(x)=0.
\end{align}
\end{enumerate}
\end{defn}
Among the functions belonging to the family $\mathcal{F}_c$ are (up to multiplicative constant $C_h>0$)
\begin{align*}
  h_1(x)=x^c\log^Ax,\ \ h_2(x)=x^ce^{A\log^Bx},\ \ h_3(x)=x^cl_m(x),
\end{align*}
where $c\in(1, 2)$, $A\in\R$, $B\in(0, 1)$, $C>0$, $l_1(x)=\log x$ and $l_{m+1}(x)=\log(l_m(x))$, for $m\in\N$.

From now on we will focus our attention  on  subsets of integers $\mathbf{N}_h$ defined in \eqref{def:N} with $h\in\mathcal{F}_c$. Let $\d_n(x)$ stands for Dirac's delta, i.e. $\d_n(x)=1$ if $x=n$, and $\d_n(x)=0$ otherwise. Our main result is the following.
\begin{thm}\label{thm:1max}
Assume that $c\in(1, 30/29)$ and $h\in\mathcal{F}_c$. Let $\eta\in\mathcal{C}^{\8}(\R)$ be a smooth cut--off function supported in $(1/2, 4)$ such that $\eta(x)=1$ for $x\in[1, 2]$ and $0\le\eta(x)\le 1$ for $x\in\R$. Define a maximal function
\begin{align}\label{thm:1max1}
  \mathcal{M}_{h}f(x)=\sup_{N\in\N}|K_{h, N}*f(x)|\ \ \mbox{for  \  $x\in\Z$,}
\end{align}
corresponding with the kernel
\begin{align}\label{thm:1max2}
  K_{h, N}(x)=\frac{1}{|\mathbf{N}_h\cap[1, N]|}\sum_{n\in\mathbf{N}_h\cap[1, N]}\d_n(x)\eta\left(\frac{n}{N}\right) \ \ \mbox{for  \  $x\in\Z$.}
\end{align}
Then
\begin{align}\label{thm:1max3}
  \|\mathcal{M}_{h}f\|_{\ell^{1, \8}(\Z)}\lesssim\|f\|_{\ell^1(\Z)},
\end{align}
for every $f\in\ell^1(\Z)$. In particular $\mathcal{M}_{h}f$ is bounded on $\ell^p(\Z)$ for every $f\in\ell^p(\Z)$ and $p>1$.
\end{thm}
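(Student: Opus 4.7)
The strategy I would follow is modeled on Urban--Zienkiewicz \cite{UZ}: one isolates a smooth ``main'' portion of the averaging kernel $K_{h,N}$ comparable to a classical Hardy--Littlewood average and treats the oscillatory remainder via Fourier/van der Corput methods combined with a Calder\'on--Zygmund decomposition at the final step. Let $\phi=h^{-1}$, which is well-defined on $[h(x_0),\8)$ by (i) of Definition \ref{def:1}. The starting point is the Piatetski--Shapiro type identity
\begin{align*}
 \mathbf{1}_{\mathbf{N}_h}(n) \;=\; \phi(n+1) - \phi(n) + \psi(-\phi(n+1)) - \psi(-\phi(n)),
\end{align*}
where $\psi(t)=t-\lfloor t\rfloor-1/2$. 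Up to an $O(\phi''(n))$ correction the leading term is $\phi'(n)\asymp n^{1/c-1}\ell_h(n)^{-1/c}$. Splitting $K_{h,N}=K_{h,N}^{\mathrm{main}}+K_{h,N}^{\mathrm{err}}$ accordingly and using $|\mathbf{N}_h\cap[1,N]|\asymp\phi(N)$, one finds that $K_{h,N}^{\mathrm{main}}$ is a positive smooth density pointwise dominated by $CN^{-1}\mathbf{1}_{[N/2,4N]}$. Consequently $\sup_N|K_{h,N}^{\mathrm{main}}\st f|$ is controlled by the classical Hardy--Littlewood maximal function on $\Z$, which is of weak type $(1,1)$.

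For the oscillatory error kernel I would insert the Fourier expansion $\psi(t)=\sum_{k\neq 0}(2\pi ik)^{-1}\ep{kt}$ and, via partial summation against $\phi'$, represent $K_{h,N}^{\mathrm{err}}\st f$ as an absolutely convergent series in $k$ of convolutions with oscillatory kernels
\begin{align*}
 L_{k,N}(n) \;=\; \frac{\phi'(n)}{\phi(N)}\,\eta(n/N)\,\ep{-k\phi(n)}.
\end{align*}
The third-derivative control on $\phi$ coming from (ii) of Definition \ref{def:1}, in particular the full strength of \eqref{eq2}, combined with a third-derivative van der Corput estimate yields a pointwise bound $|\widehat{L_{k,N}}(\xi)|\lesssim N^{-\sigma(c)}|k|^{\b}$ uniform in $\xi\in\T$, with some $\sigma(c)>0$ and some $\b<1$ when $c$ is close to $1$. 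Summing in $k$ and invoking Plancherel produces $\|K_{h,N}^{\mathrm{err}}\st f\|_{\ld{\Z}}\lesssim N^{-\sigma'(c)}\|f\|_{\ld{\Z}}$, which, combined with the main-term bound, already gives boundedness of $\mathcal{M}_h$ on $\ld{\Z}$.

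The final and most delicate step is upgrading this $\ell^2$ information to the weak $(1,1)$ inequality \eqref{thm:1max3}. I would perform a Calder\'on--Zygmund decomposition $f=g+b$ at level $\a$ with $b=\sum_Q b_Q$, each $b_Q$ mean zero and supported in a dyadic interval $Q$, and discard the exceptional set $\bigcup_Q 3Q$ of measure $\lesssim\|f\|_{\ell^1(\Z)}/\a$. The good part $g$ is dealt with by the $\ell^2$ estimate of the previous paragraph. For the bad part I would group $b=\sum_s b^{(s)}$ according to $|Q|=2^s$ and, for each dyadic scale $N=2^k$, split into two regimes relative to a threshold $2^s\sim N^{\k}$ with an optimally chosen $\k\in(0,1)$. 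In the ``small cube'' regime $s\le\k k$ the mean-zero property of $b_Q$ is played against the regularity of $K_{h,N}^{\mathrm{main}}$ and against a sharper van der Corput bound on $K_{h,N}^{\mathrm{err}}$; this is exactly where the quantitative hypothesis $c<30/29$ is consumed, to force the resulting gains in $k-s$ to be summable. In the ``large cube'' regime $s>\k k$ a crude $\ell^1\to\ell^1$ bound (with no smallness) is interpolated against the $\ell^2$ gain to recover geometric decay in $s-k$. Summing over $s$ and $k$ yields \eqref{thm:1max3}; the $\ell^p$ statement for $p>1$ follows by interpolation with the trivial $\ell^\8$ bound. The hardest part of the argument will be calibrating the van der Corput numerology in the small-cube regime tightly enough to cover the entire range $c\in(1,30/29)$ stated in the theorem.
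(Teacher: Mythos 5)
There is a genuine gap, and it lies exactly where the real difficulty of the theorem sits: your plan for the bad part of the Calder\'on--Zygmund decomposition would not go through. Splitting $K_{h,N}=K^{\mathrm{main}}_{h,N}+K^{\mathrm{err}}_{h,N}$ and playing the mean--zero property of $b_Q$ against the regularity of $K^{\mathrm{main}}_{h,N}$ only handles the smooth piece; the error piece $K^{\mathrm{err}}_{h,N}$ is just as rough as $K_{h,N}$ itself (a difference of normalized point masses of height $\simeq\varphi(N)^{-1}$ and a spread-out density of height $\simeq N^{-1}$), so its $\ell^1$ norm is $\simeq 1$ and cancellation of $b_Q$ buys nothing against it. The ``sharper van der Corput bound'' you invoke for $K^{\mathrm{err}}_{h,N}$ is an $\ell^2$ (Fourier sup-norm) statement, and it cannot be applied to $b_Q$ because a Calder\'on--Zygmund decomposition gives no control of $\|b_Q\|_{\ell^2}$: the enemy is precisely $f$ equal to a single large point mass, for which your small-cube regime produces nothing summable. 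This is why the paper does not decompose $K_{h,N}$ at all, but instead analyzes the autocorrelation $K_{h,N}*\widetilde{K}_{h,N}$ and runs a Fefferman--Christ--Urban--Zienkiewicz scheme (Theorem \ref{thm:3CZ}, Lemma \ref{lem:4}): the bad functions are additionally truncated at the scale-dependent height $\lambda d_n$ with $d_n\simeq\varphi(2^n)$ (the pieces $b_s^n$, $B_s^n$, $g_s^n$), the part above that height is handled by a support-counting argument using $|\supp K_n|=d_n$, and only the truncated mean-zero part is paired in $\ell^2$ via $\langle K_n*B,K_n*B'\rangle=\langle K_n*\widetilde{K}_n*B,B'\rangle$, where the delta mass of size $\simeq d_n^{-1}$ at $0$ is absorbed because $\|B_s^n\|_{\ell^2}^2\lesssim\lambda d_n\|B_s^n\|_{\ell^1}$. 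None of these ingredients appears in your outline.

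The two concrete lemmas your sketch would also need, and does not supply, are the structural facts about $K_{h,N}*\widetilde{K}_{h,N}$: the near-diagonal bound $|K_{h,N}*\widetilde{K}_{h,N}(x)|\lesssim N^{-1}$ for $0<|x|\le\varphi(N)$, which in the paper is a combinatorial counting argument (Lemma \ref{lem:1ker0}) and is not accessible by exponential-sum methods, and the decomposition $K_{h,N}*\widetilde{K}_{h,N}=\varphi(N)^{-1}\delta_0+G_N+E_N$ for $|x|>\varphi(N)$ with $G_N$ slowly varying and $|E_N|\lesssim N^{-1-\chi}$ (Lemma \ref{lem:1ker}), whose proof requires bilinear-type phases $m_1\varphi(n)+m_2\varphi(n+x)$ and the two-frequency van der Corput estimate of Lemma \ref{vdc:lem2}; this is where the restriction $c<30/29$ is actually consumed, not in a small-cube/large-cube calibration. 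By contrast, your first two steps are essentially sound and close in spirit to the paper: the sawtooth expansion of $\mathbf{1}_{\mathbf{N}_h}$, the asymptotic $|\mathbf{N}_h\cap[1,N]|\sim\varphi(N)$, and an $\ell^2$ bound for the maximal function (the paper's Lemma \ref{lemform1} plays this role) are all correct, but they only yield the $p>1$ theory and the good-function estimate; they do not, by themselves or via the route you describe, yield the weak type $(1,1)$ inequality.
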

The boundedness of \eqref{thm:1max1} implies the boundedness of \eqref{max:1} (possibly with a different constant) and vice versa. Therefore, it will cause no confusion if we use the same letter $\mathcal{M}_{h}f$ in the definitions \eqref{max:1} and \eqref{thm:1max1}.
The proof of Theorem \ref{thm:1max} (see Section \ref{sectmax}) will be based on the concepts of \cite{UZ}. In \cite{UZ} the authors used a subtle version of Calder\'{o}n--Zygmund decomposition, which was pioneered by Fefferman \cite{F} and later on  developed by Christ \cite{C}, to study maximal functions. Fefferman's ideas turned out to be applicable to the discrete settings as it was shown in \cite{UZ}, and recently also in \cite{LaV} and \cite{C1}.
 Heuristically speaking, the weak type
$(1, 1)$ bound of $\mathcal{M}_hf$ is obtained by considering the recalcitrant part of the Calder\'{o}n--Zygmund decomposition in $\ell^2$ (see Lemma \ref{lem:4} and Theorem \ref{thm:3CZ} in Section \ref{sectmax}), using the fact that
$\langle K_{h, N}*f, K_{h, N}*g\rangle=\langle K_{h, N}*\widetilde{K}_{h, N}*f, g\rangle$, and decomposing $K_{h, N}*\widetilde{K}_{h, N}(x)$\footnote{where $\widetilde{K}_{h, N}(x)=K_{h, N}(-x)$}
into several manageable pieces (a delta mass at $0$, a slowly varying function $G_{N}(x)$, and a small error term $E_{N}(x)$, see Section \ref{section4}) obtained by special Van der Corput estimates, (we refer to Section \ref{section3}).

 As we mentioned before our motivations to study such maximal functions are derived in part by scant knowledge of the structure of  functions $h$ for which $\mathcal{M}_{h}f$ is of weak type $(1, 1)$. The family $\mathcal{F}_c$ was studied in \cite{M} to generate various thin subsets of primes in the context
of pointwise ergodic theorems and it turned out to be a good candidate to improve
qualitatively  theorem from \cite{UZ}. On the other hand the family $\mathcal{F}_c$ gives rise to renew the discussion initiated in \cite{BKQW} and sheds some new light on $L^1$ -- pointwise ergodic theorems which have not been brought up there.
It is worth pointing out that the complexity of the family $\mathcal{F}_c$  causes
some obstructions which did not occur in \cite{UZ}. Namely, we had to completely change   the method of approximation of the kernel $K_{h, N}*\widetilde{K}_{h, N}(x)$ compared to the method form \cite{UZ} and this is the novelty of this paper (see Section \ref{section3} and Section \ref{section4}). Their approach is inadequate here since it leads us to study exponential sums with a complicated form of a phase function, and loosely speaking this is the reason why we prefer to consider $\mathbf{N}_h\cap[1, N]$ in \eqref{max:1} instead of $\{\lfloor h(m)\rfloor: m\in[1, N]\}$.

Now we have to emphasize that our method does not settle the case when $c=1$. It would be nice to know, for instance, if $\mathcal{M}_hf$ is of weak type $(1, 1)$ for $h(x)=x\log x$. We hope to return this matter at a future time.
Although the argument as stated works only for $1 < c < 30/29$, the
obstacles involved in getting a similar result for $1 < c < 2$ pale in comparison to the obstacles for $c > 2$,
since at that point $K_{h, N}*\widetilde{K}_{h, N}(x)$ no longer has any useful properties. Nevertheless, LaVictoire \cite{LaV} and Christ \cite{C1} provided a certain wide class of sequences for which $\mathcal{M}f$ from \eqref{maximal:1} is of weak type $(1, 1)$.

 If it comes to $\ell^p(\Z)$ -- boundedness of $\mathcal{M}_hf$ for $p>1$, one can conclude, thanks to Lemma \ref{lemform1}, that it holds for all $h\in\mathcal{F}_c$ provided that $c\in[1, 4/3)$. However, if $c=1$ then the conditions in \eqref{eq2} from Definition \ref{def:1} must be modified in the following way.
\begin{rem}\label{rem:1}
If $c=1$, then we additionally assume that $\vt(x)$ is positive, decreasing and  for every $\varepsilon>0$
  \begin{align}\label{eq3}
    \frac{1}{\vt(x)}\lesssim_{\varepsilon}x^{\varepsilon}, \ \ \mbox{and} \ \ \lim_{x\to\8}\frac{x}{h(x)}=0.
  \end{align}
  Furthermore,
  \begin{align}\label{eq4}
  \lim_{x\to\8}\vartheta(x)=0,\ \ \lim_{x\to\8}\frac{x\vartheta'(x)}{\vt(x)}=0,\ \ \lim_{x\to\8}\frac{x^2\vartheta''(x)}{\vt(x)}=0,\ \ \lim_{x\to\8}\frac{x^3\vartheta'''(x)}{\vt(x)}=0.
\end{align}
\end{rem}

On the one hand, our approach supplies one more different method to the techniques developed in  \cite{BKQW} which permits us to treat with $L^p$ -- boundedness for ergodic averages. On the other hand, it is worth noting that some of the $L^p$ results for $p > 1$ are new, as there are some $h\in\mathcal{F}_c$ which do not belong to any Hardy field and are thus not covered by the results of \cite{BKQW}.

Theorem \ref{thm:1max} is the main ingredient in the following.
\begin{thm}\label{ergthm}
Assume that $c\in(1, 30/29)$ and $h\in\mathcal{F}_c$. Let $(X, \mathcal{B}(X), \mu, T)$ be a dynamical system, where $\mu$ is a $\sigma$--finite measure and $T$ is an invertible and measure preserving transformation on $X$. Then for every $f\in L^p(X, \mu)$ where $p\ge1$, the ergodic averages
\begin{align}\label{ergh}
  A_{h, N}f(x)=\frac{1}{|\mathbf{N}_h\cap[1, N]|}\sum_{n\in\mathbf{N}_h\cap[1, N]}f(T^nx) \ \ \mbox{for  \  $x\in X$},
\end{align}
converges $\mu$--almost everywhere on $X$.
\end{thm}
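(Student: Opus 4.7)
The plan is to follow the classical Calder\'on transference scheme: first transfer the weak type $(1,1)$ bound for $\mathcal{M}_{h}$ from $\ell^1(\Z)$ to the dynamical system, then combine it with pointwise convergence on a dense subclass via the Banach principle.

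Step 1 (Transference of the maximal inequality). Using the invertibility of $T$ and the fact that $T$ is measure preserving, I would run Calder\'on's transference argument: for fixed $N_0$ and a measurable $f\ge0$, truncate to a window of length $\sim M\gg N_0$, define the ``orbit function'' $F_x(n)=f(T^nx)\mathbf{1}_{[-M,M]}(n)$ on $\Z$, apply Theorem \ref{thm:1max} to $F_x$, integrate in $x$, and let $M\to\infty$. Since $\mu$ is only $\sigma$--finite, I first restrict to an exhausting sequence of sets of finite measure and use invariance to reduce to the finite measure case. This yields
\begin{equation*}
\mu\bigl\{x\in X:\sup_{N\in\N}|A_{h,N}f(x)|>\lambda\bigr\}\lesssim\lambda^{-1}\|f\|_{L^1(X,\mu)},
\end{equation*}
together with $L^p(X,\mu)$--boundedness of the supremum for $p>1$ by Marcinkiewicz interpolation (the endpoint $p=\infty$ is trivial).

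Step 2 (Pointwise convergence on a dense class). For $p>1$, pointwise convergence of $A_{h,N}f$ on $L^p(X,\mu)$ is already contained in \cite{M}, which established the corresponding $\ell^p(\Z)$--theory and the associated pointwise ergodic theorems for the family $\mathcal{F}_c$. Since $L^1(X,\mu)\cap L^2(X,\mu)$ is dense in $L^1(X,\mu)$, and convergence on this intersection follows from the $p=2$ case of \cite{M}, I have a dense subclass $\mathcal{D}\subset L^p(X,\mu)$ on which $A_{h,N}f$ converges $\mu$--a.e.\ for every $p\ge 1$.

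Step 3 (Banach principle conclusion). Given $f\in L^p(X,\mu)$ and $\e>0$, pick $g\in\mathcal{D}$ with $\|f-g\|_{L^p}<\e$, and write
\begin{equation*}
\limsup_{M,N\to\infty}|A_{h,N}f-A_{h,M}f|\le 2\sup_{N}|A_{h,N}(f-g)|+\limsup_{M,N\to\infty}|A_{h,N}g-A_{h,M}g|.
\end{equation*}
The second term vanishes a.e.\ by Step 2. For $p>1$ the first term is controlled in $L^p$--norm by the transferred $L^p$ maximal bound; for $p=1$ it is controlled in measure by the weak type $(1,1)$ bound from Step 1. In either case, letting $\e\to 0$ shows that $\{A_{h,N}f(x)\}_{N\in\N}$ is Cauchy $\mu$--a.e., hence convergent.

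The main obstacle is really Step 1 rather than Step 3: the transference argument must be executed carefully because $\mu$ is $\sigma$--finite and the averages $A_{h,N}$ sit along the thin sparse set $\mathbf{N}_h\cap[1,N]$ (of cardinality $\simeq N/h^{-1}(N)$ rather than $N$). The window length $M$ in the transference has to be chosen to dominate all relevant $N\le N_0$ simultaneously, and one must verify that the normalization in \eqref{thm:1max2} matches that of \eqref{ergh}; this is precisely the content of the remark following Theorem \ref{thm:1max}, which identifies the maximal operator defined via $K_{h,N}$ with the one defined via $|\mathbf{N}_h\cap[1,N]|^{-1}\sum$.
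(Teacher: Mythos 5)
There is a genuine gap, and it sits exactly where you wave your hands: Step 2. You claim that almost everywhere convergence of $A_{h,N}f$ for $f$ in a dense subclass (say $L^2$) is ``already contained in \cite{M}'', but \cite{M} concerns maximal functions and ergodic theorems along the sets $\mathbf{N}_h\cap\mathbf{P}$, i.e.\ thin subsets of the \emph{primes}, not along $\mathbf{N}_h$ itself, so it cannot be quoted for \eqref{ergh}. More fundamentally, for averages along a sparse set there is no off-the-shelf dense class: the classical Riesz/coboundary decomposition that works for Birkhoff averages does not give convergence of $A_{h,N}g$ for coboundaries $g=u-u\circ T$, and a maximal inequality alone (weak $(1,1)$ or $L^p$) never yields convergence without such a class. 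Establishing the $L^2$ pointwise convergence is precisely the content of the paper's Section on Theorem \ref{ergthm}: one passes by summation by parts to the weighted averages $A^1_{h,N}f=\frac1N\sum_{n\in\mathbf{N}_h\cap[1,N]}\vp'(n)^{-1}f(T^nx)$, proves the dyadic maximal bound \eqref{ergmax} by transference and interpolation from Theorem \ref{thm:1max}, and then proves the oscillation inequality \eqref{ergosc} by comparing the kernel $K^1_{h,N}$ with the ordinary Ces\`aro kernel $K^2_{h,N}$ through Parseval and the exponential-sum approximation of Lemma \ref{lemform1} (uniform in the frequency $\a$), invoking Bourgain's oscillation estimates \cite{B3} for the standard averages. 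None of this appears in your proposal, and without it Step 3 has nothing to feed on.

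Your Steps 1 and 3 are fine as far as they go: Calder\'on transference of the weak type $(1,1)$ bound (with the reduction of the $\sigma$--finite case and the matching of the normalizations $\vp(N)\sim|\mathbf{N}_h\cap[1,N]|$) and the Banach principle are routine and are also implicitly used in the paper. But you misidentify the main obstacle: it is not the transference window bookkeeping, it is the construction of a dense class of functions for which the sparse averages converge, which the paper obtains via the oscillation inequality \eqref{ergosc}, Lemma \ref{lemform1}, and the summation-by-parts reduction from $A_{h,N}$ to $A^1_{h,N}$. As written, your argument is circular at that point: it assumes the ergodic theorem on $L^2$ (attributed to an inapplicable reference) in order to prove the ergodic theorem.
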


The paper is organized as follows. In Section \ref{section2} we give the necessary properties of function $h\in\mathcal{F}_c$ and its inverse $\vp$. In Section \ref{section3} we estimate some exponential sums which allow us to decompose the kernel $K_{h, N}*\widetilde{K}_{h, N}(x)$ in the penultimate section. Assuming momentarily Theorem \ref{thm:1max} (its proof has been postponed to the last section), we prove Theorem \ref{ergthm} in Section  \ref{secterg}.

Despite the fact that Theorem \ref{thm:1max} works only for $c\in(1, 30/29)$ we decided to formulate the results in Section \ref{section2} and Section \ref{section3} also for $c=1$ (see Remark \ref{rem:1}), mainly due to new examples of functions in the family $\mathcal{F}_1$.

\section*{Acknowledgements}
I would like to thank Jacek Zienkiewicz for years of discussions on related problems.

The author is grateful to the referee for careful reading
of the manuscript and useful remarks that led to the improvement
of the presentation.

\section{Basic properties of functions $h$ and $\vp$}\label{section2}
In this section we gather all necessary properties of function $h\in\mathcal{F}_c$ and its inverse $\vp$ and we follow the notation used in Section 2 from \cite{M1}.
\begin{lem}\label{filem}
Assume that $c\in[1, 2)$ and $h\in\mathcal{F}_c$. Then for every $i=1, 2, 3$ there exists a function $\vt_i:[x_0, \8)\mapsto\R$ such that
 \begin{align}\label{heq}
   xh^{(i)}(x)=h^{(i-1)}(x)(\a_i+\vt_i(x)),\ \ \mbox{for every \ $x\ge x_0$,}
 \end{align}
 where $\a_i=c-i+1$, $\vt_1(x)=\vt(x)$,
 \begin{align}\label{heq1}
   \vt_i(x)=\vt_{i-1}(x)+\frac{x\vt_{i-1}'(x)}{\a_{i-1}+\vt_{i-1}(x)},\ \ \mbox{for $i=2, 3$}\ \ \mbox{and}\ \ \lim_{x\to\8}\vartheta_i(x)=0,\ \ \mbox{for $i=1, 2, 3$}.
 \end{align}
 If $c=1$, then there exist constants $0<c_1\le c_2$ and a function $\vr:[x_0, \8)\mapsto[c_1, c_2]$, such that
  \begin{align}\label{vt2c1}
   \vt_2(x)=\vt(x)\vr(x),\ \ \mbox{for every \ $x\ge x_0$ \ and } \lim_{x\to\8}\frac{x\vt_2'(x)}{\vt_2(x)}=0.
 \end{align}
 In particular \eqref{heq} with $i=2$ reduces to
 \begin{align}\label{heqc1}
   xh''(x)=h'(x)\vt(x)\vr(x),\ \ \mbox{for every \ $x\ge x_0$.}
 \end{align}
 The cases for $i=1, 3$ remain unchanged.
\end{lem}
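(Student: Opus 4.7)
The plan is to proceed by induction on $i$, using the logarithmic-derivative representation coming from \eqref{eq1}. For the base case $i=1$, I would differentiate $h(x)=C_h x^c \ell_h(x)$ directly. Since $\log \ell_h(x) = \int_{x_0}^x \vt(t)/t\, dt$, we get $\ell_h'(x)/\ell_h(x) = \vt(x)/x$, hence $h'(x)/h(x) = c/x + \vt(x)/x$. Multiplying by $x$ gives $xh'(x) = h(x)(c+\vt(x))$, which is \eqref{heq} with $\a_1=c$ and $\vt_1 = \vt$; the limit $\vt_1 \to 0$ is part of \eqref{eq2}.

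For $i=2,3$ I would use a purely formal induction step. Assume $xh^{(i-1)}(x) = h^{(i-2)}(x)(\a_{i-1}+\vt_{i-1}(x))$ and differentiate both sides, obtaining
\begin{equation*}
h^{(i-1)}(x) + xh^{(i)}(x) = h^{(i-1)}(x)(\a_{i-1}+\vt_{i-1}(x)) + h^{(i-2)}(x)\vt_{i-1}'(x).
\end{equation*}
Since $\a_{i-1}-1 = \a_i$ and $h^{(i-2)}(x) = xh^{(i-1)}(x)/(\a_{i-1}+\vt_{i-1}(x))$ (using the inductive identity itself, valid because the denominator is nonzero in the range under consideration), substituting gives
\begin{equation*}
xh^{(i)}(x) = h^{(i-1)}(x)\Bigl(\a_i + \vt_{i-1}(x) + \frac{x\vt_{i-1}'(x)}{\a_{i-1}+\vt_{i-1}(x)}\Bigr),
\end{equation*}
which is the required identity with $\vt_i$ defined as in \eqref{heq1}. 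The limits $\vt_i(x)\to 0$ for $c\in(1,2)$ then follow by a telescoping argument from \eqref{eq2}: at each step, $\vt_{i-1}\to 0$ by induction, while $x\vt_{i-1}'(x)\to 0$ because $x\vt_{i-1}'$ is a polynomial combination of $\vt, x\vt', x^2\vt''$, \ldots divided by powers of $\a_k + \vt_k$ (each bounded away from zero when $c>1$, since $\a_k \geq c-2 > -1$ and the first nontrivial case is $\a_2 = c-1 > 0$).

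The genuine technical point is the $c=1$ case, where $\a_2 = 0$ so the denominator in the step $i=3$ is only $\vt_2(x)$, and one must quantify how $\vt_2$ vanishes. For this I would write
\begin{equation*}
\frac{\vt_2(x)}{\vt(x)} = 1 + \frac{x\vt'(x)}{\vt(x)(1+\vt(x))},
\end{equation*}
which, by the first limit in \eqref{eq4} and $\vt\to 0$, tends to $1$. Choosing $x_0$ large enough, the ratio lies in a compact subinterval $[c_1,c_2]\subset(0,\infty)$, yielding $\vt_2 = \vt\,\vr$ with $\vr$ as claimed, and in particular $\vt_2>0$. To show $x\vt_2'/\vt_2 \to 0$, I would differentiate the explicit formula $\vt_2 = \vt + x\vt'/(1+\vt)$ and expand
\begin{equation*}
\frac{x\vt_2'(x)}{\vt_2(x)} = \frac{1}{\vt_2(x)}\Bigl[x\vt'(x) + \frac{x\vt'(x)+x^2\vt''(x)}{1+\vt(x)} - \frac{(x\vt'(x))^2}{(1+\vt(x))^2}\Bigr],
\end{equation*}
then divide through by $\vt$ (using $\vt_2 \simeq \vt$) and estimate each term with \eqref{eq4}, noting that $x\vt' = \vt\cdot(x\vt'/\vt)\to 0$ so the quadratic term is negligible. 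The main obstacle, and essentially the only nonroutine piece, is this bookkeeping at $c=1$: one has to verify that the normalized limits in \eqref{eq4} are preserved when passing from $\vt_1$ to $\vt_2$, and the calculation above is exactly what makes that work.
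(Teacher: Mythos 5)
Your proof is correct and follows essentially the same route as the paper: the paper defers this lemma to \cite{M1}, but its in-text proof of the analogous Lemma \ref{funlemfi} for $\vp$ proceeds by exactly your scheme --- differentiate the identity $xh^{(i-1)}=h^{(i-2)}(\a_{i-1}+\vt_{i-1})$, substitute it back to get the recursion \eqref{heq1}, and at $c=1$ use \eqref{eq4} to control $\vt_2/\vt$ and $x\vt_2'/\vt_2$. The only cosmetic point is that $\vr=\vt_2/\vt$ is bounded in some $[c_1,c_2]\subset(0,\8)$ on all of $[x_0,\8)$, not just for large $x$, since $\vt_2=xh''/h'>0$ and $\vt>0$ make the ratio continuous and positive on compacta while your limit argument handles the tail.
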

\begin{proof}
For the proof we refer to \cite{M1}.
\end{proof}

\begin{lem}\label{formfunlem}
Assume that $c\in[1, 2)$, $h\in\mathcal{F}_c$, $\g=1/c$ and let $\vp:[h(x_0), \8)\mapsto[x_0, \8)$ be its inverse. Then there exists a function $\te:[h(x_0),\8)\mapsto\R$ such that $x\vp'(x)=\vp(x)(\g+\te(x))$ and
  \begin{align}\label{funfi}
  \vp(x)=x^{\g}\ell_{\vp}(x),\ \ \ \mbox{where}\ \ \ \ell_{\vp}(x)=e^{\int_{h(x_0)}^x\frac{\te(t)}{t}dt+D},
\end{align}
  for every $x\ge h(x_0)$, where $D=\log (x_0/h(x_0)^{\g})$ and $\lim_{x\to\8}\te(x)=0$. Moreover,
  \begin{align}\label{tetadef}
  \te(x)=\frac{1}{(c+\vartheta(\vp(x)))}-\g
  =-\frac{\vartheta(\vp(x))}{c(c+\vartheta(\vp(x)))}.
\end{align}
  Additionally, for every $\e>0$
  \begin{align}\label{slowhfi}
    \lim_{x\to\8}x^{-\e}L(x)=0,\ \ \ \mbox{and}\ \ \ \lim_{x\to\8}x^{\e}L(x)=\8,
  \end{align}
  where $L(x)=\ell_h(x)$ or $L(x)=\ell_{\vp}(x)$. In particular, for every $\e>0$
  \begin{align}\label{ratefi}
    x^{\g-\e}\lesssim_{\e}\vp(x),\ \ \ \mbox{and}\ \ \ \lim_{x\to\8}\frac{\vp(x)}{x}=0.
  \end{align}
  Finally, $x\mapsto x\vp(x)^{-\d}$ is increasing for every $\d< c$, (if $c=1$, even $\d\le1$ is allowed) and for every $x\ge h(x_0)$ we have
  \begin{align}\label{compfi}
    \vp(x)\simeq\vp(2x),\ \ \mbox{and}\ \ \vp'(x)\simeq\vp'(2x).
  \end{align}
\end{lem}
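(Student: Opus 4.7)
The plan is to derive everything from the inversion identity $h(\vp(x)) = x$ combined with the structural relation provided by Lemma \ref{filem}. Differentiating gives $\vp'(x) = 1/h'(\vp(x))$, and then \eqref{heq} with $i=1$, namely $y h'(y) = h(y)(c + \vt(y))$, together with $h(\vp(x)) = x$, yields
\begin{equation*}
  x\vp'(x) = \frac{\vp(x)}{c + \vt(\vp(x))}.
\end{equation*}
Setting $\te(x) := 1/(c + \vt(\vp(x))) - \g$ simultaneously produces the identity $x\vp'(x) = \vp(x)(\g + \te(x))$ and the explicit formula \eqref{tetadef}. Since $\vp$ is strictly increasing with $\vp(x) \to \infty$ (because $h$ maps $[x_0,\infty)$ bijectively onto $[h(x_0),\infty)$ with $h(y)\to\infty$), the assumption $\vt(y) \to 0$ from \eqref{eq2} (or \eqref{eq4} when $c=1$) forces $\te(x) \to 0$.

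Next, integrating $(\log \vp(x))' = (\g + \te(x))/x$ from $h(x_0)$ to $x$ and using $\vp(h(x_0)) = x_0$ gives
\begin{equation*}
  \log \vp(x) = \log x_0 + \g \log\bigl(x/h(x_0)\bigr) + \int_{h(x_0)}^{x} \frac{\te(t)}{t}\, dt,
\end{equation*}
which rearranges to \eqref{funfi} with $D = \log(x_0 / h(x_0)^{\g})$. The slow variation statement \eqref{slowhfi} then follows in the standard way: given $\e > 0$, choose $x_1$ so that $|\vt(t)|, |\te(t)| < \e$ for $t \ge x_1$, split the integrals defining $\log \ell_h(x)$ and $\log \ell_{\vp}(x)$ at $x_1$, and bound the tails by $\e \log(x/x_1)$; this yields both $x^{-\e} L(x) \to 0$ and $x^{\e} L(x) \to \infty$ for $L \in \{\ell_h, \ell_{\vp}\}$. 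Inserted into \eqref{funfi} this gives $\vp(x) \gtrsim_{\e} x^{\g - \e}$, and for $c > 1$ the companion bound $\vp(x) \lesssim_{\e} x^{\g + \e}$ with $\g < 1$ forces $\vp(x)/x \to 0$; for $c = 1$ this last statement is precisely the hypothesis $x/h(x) \to 0$ from \eqref{eq3}.

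For monotonicity of $x \mapsto x\vp(x)^{-\d}$, I differentiate
\begin{equation*}
  \frac{d}{dx}\bigl(x\vp(x)^{-\d}\bigr) = \vp(x)^{-\d}\bigl(1 - \d(\g + \te(x))\bigr).
\end{equation*}
When $\d < c$ one has $\d\g < 1$, and since $\te(x) \to 0$ the bracket is eventually positive, so after possibly enlarging $x_0$ the claim follows. The endpoint case $c = 1$, $\d = 1$ (where $\d\g = 1$) requires $\te(x) \le 0$, which is guaranteed by \eqref{tetadef} combined with the sign hypothesis $\vt > 0$ from Remark \ref{rem:1}. Finally, \eqref{compfi} is immediate from \eqref{funfi}: one has $\vp(2x)/\vp(x) = 2^{\g} \exp\bigl(\int_{x}^{2x} \te(t)/t\, dt\bigr)$, and since $\te \to 0$ the integral is $o(1)$; the doubling for $\vp'$ follows from $\vp'(x) = \vp(x)/(x(c + \vt(\vp(x))))$ and the fact that $c + \vt(\vp(x))$ is eventually comparable to $c$. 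The only genuine subtlety is the $c = 1$ endpoint of the monotonicity statement, which is exactly why Remark \ref{rem:1} imposes positivity of $\vt$; everything else is routine bookkeeping of the asymptotic $\vt, \te \to 0$.
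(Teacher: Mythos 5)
Your proposal is correct and takes the expected route: the paper itself gives no proof here (it defers to \cite{M1}), and the natural argument is exactly yours --- differentiate $h(\vp(x))=x$, invoke \eqref{heq} with $i=1$ to get $x\vp'(x)=\vp(x)/(c+\vt(\vp(x)))$, read off \eqref{tetadef}, and integrate the logarithmic derivative to obtain \eqref{funfi}, with the remaining assertions following from $\te\to0$ (plus \eqref{eq3} and the positivity of $\vt$ at the $c=1$ endpoints). Two cosmetic points: in the slow-variation step, bounding the tail of $\int\te(t)/t\,dt$ by $\e\log(x/x_1)$ with the \emph{same} $\e$ only gives boundedness of $x^{-\e}L(x)$, so run it with $\e/2$ (or any $\e'<\e$); and, as you note, the monotonicity of $x\mapsto x\vp(x)^{-\d}$ is an eventual statement (for $x$ large depending on $\d$), which is how it is used in the paper.
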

\begin{proof}
For the proof we refer to \cite{M1}.
\end{proof}
The next lemma will be very important in the sequel.
\begin{lem}\label{intlem}
Assume that $h\in\mathcal{F}_c$ and let $\vp:[h(x_0), \8)\mapsto[x_0, \8)$ be its inverse. Then
\begin{align}\label{intlemform}
  p\in\mathbf{N}_{h} \Longleftrightarrow\ \lfloor-\vp(p)\rfloor-\lfloor-\vp(p+1)\rfloor=1,
\end{align}
for all sufficiently large $p\in\mathbf{N}_{h}$.
\end{lem}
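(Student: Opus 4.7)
The plan is to recast both sides of the claimed equivalence as statements about integers in a short interval and then compare them directly. First I would apply $\lfloor -x \rfloor = -\lceil x \rceil$ to obtain
$$\lfloor-\vp(p)\rfloor-\lfloor-\vp(p+1)\rfloor = \lceil \vp(p+1) \rceil - \lceil \vp(p) \rceil,$$
which is exactly the number of integers lying in the half-open interval $(\vp(p), \vp(p+1)]$. On the other hand, by Definition \ref{def:1}(i), $h$ is strictly increasing with inverse $\vp$, so the equation $p = \lfloor h(m) \rfloor$ is equivalent to $\vp(p) \le m < \vp(p+1)$; thus $p \in \mathbf{N}_h$ if and only if $[\vp(p), \vp(p+1)) \cap \Z \ne \emptyset$.

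The key quantitative input is the estimate $\vp(p+1) - \vp(p) < 1$ for all sufficiently large $p$. I would deduce this from Lemma \ref{formfunlem}: the identity $x\vp'(x) = \vp(x)(\g + \te(x))$ combined with $\te(x) \to 0$ and $\vp(x)/x \to 0$ (the latter coming from \eqref{ratefi}) forces $\vp'(x) \to 0$, and the mean value theorem applied on $[p, p+1]$ then gives the bound. Consequently, for large $p$ both $[\vp(p), \vp(p+1))$ and $(\vp(p), \vp(p+1)]$ contain at most one integer.

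For the forward implication, given $p \in \mathbf{N}_h$ with witness $m$ satisfying $\vp(p) \le m < \vp(p+1)$, I read off $\lceil \vp(p) \rceil \le m$ and $\lceil \vp(p+1) \rceil \ge m+1$, so the floor difference is at least $1$; the upper bound follows from the length estimate, since two integers in $(\vp(p), \vp(p+1)]$ would force $\vp(p+1) - \vp(p) > 1$. For the converse, assuming the floor difference equals $1$, I set $m := \lceil \vp(p) \rceil$; then $m \ge \vp(p)$ by the definition of the ceiling, and $m < \vp(p+1)$ because otherwise $\lceil \vp(p+1) \rceil \le m = \lceil \vp(p) \rceil$ would contradict the hypothesis. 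Thus $m \in \N$ witnesses $p \in \mathbf{N}_h$.

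The main technical ingredient is the decay $\vp'(x) \to 0$, which is furnished directly by Lemma \ref{formfunlem}; beyond that, the only care required is at the interval endpoints (when $\vp(p)$ or $\vp(p+1)$ happens to be an integer), and the strict inequalities in the membership criterion together with the length-less-than-one bound handle these edge cases automatically. I do not foresee a serious obstacle.
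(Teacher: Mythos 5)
Your argument is correct and is essentially the standard proof that the paper itself defers to \cite{M1} (and implicitly uses again in Lemma \ref{lemform1}): invert $p=\lfloor h(m)\rfloor$ to $\vp(p)\le m<\vp(p+1)$, use $\vp(p+1)-\vp(p)=\vp'(\xi)\to 0$ from Lemma \ref{formfunlem} so that this interval contains at most one integer, and match that with the floor difference. The only slip is cosmetic: $\lceil\vp(p+1)\rceil-\lceil\vp(p)\rceil$ counts the integers of $[\vp(p),\vp(p+1))$ rather than of $(\vp(p),\vp(p+1)]$, but since either interval has length less than one for large $p$, your two implications go through unchanged.
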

\begin{proof}
For the proof we refer to \cite{M1}.
\end{proof}

We finish this section by proving Lemma \ref{funlemfi}.

\begin{lem}\label{funlemfi}
Assume that $c\in[1, 2)$, $h\in\mathcal{F}_c$, $\g=1/c$ and let $\vp:[h(x_0), \8)\mapsto[x_0, \8)$ be its inverse. Then
 for every $i=1, 2, 3,$ there exists  a function $\theta_i:[h(x_0), \8)\mapsto\R$ such that
 \begin{align}\label{fiequat}
   x\vp^{(i)}(x)=\vp^{(i-1)}(x)(\b_i+\theta_i(x)), \ \ \mbox{for every \  $x\ge h(x_0)$,}
 \end{align}
  where $\b_i=\g-i+1$,  $\lim_{x\to\8}\te_i(x)=0$ and $\lim_{x\to\8}x\te_i'(x)=0$. If $c=1$, then there exists a positive function $\s:[h(x_0), \8)\mapsto(0, \8)$ and a function $\t:[h(x_0), \8)\mapsto \R$ such that \eqref{fiequat} with $i=2$ reduces to
\begin{align}\label{fiequat1}
  x\vp''(x)=\vp'(x)\s(x)\t(x),\ \ \mbox{for every \  $x\ge h(x_0)$ \ and } \lim_{x\to\8}\frac{x\te_2'(x)}{\te_2(x)}=0.
\end{align}
The cases for $i=1, 3$ remain unchanged. Moreover, $\s(x)$  is decreasing, $\lim_{x\to\8}\s(x)=0,$ $\s(2x)\simeq\s(x),$ and  $\s(x)^{-1}\lesssim_{\varepsilon}x^{\varepsilon},$
for every $\varepsilon>0$. Finally, there are constants $0<c_3\le c_4$ such that $c_3\le-\t(x)\le c_4$ for every $x\ge h(x_0)$.
\end{lem}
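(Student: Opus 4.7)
The plan is to prove \eqref{fiequat} inductively on $i$, using the identity $h'(\varphi(x))\varphi'(x)=1$ and the already-established relations for $h$ from Lemma \ref{filem}. The base case $i=1$ is just a rewriting of the formula $x\varphi'(x)=\varphi(x)(\gamma+\theta(x))$ furnished by Lemma \ref{formfunlem}, with $\theta_1:=\theta$. The assertions $\theta_1(x)\to0$ and $x\theta_1'(x)\to0$ follow from differentiating the explicit expression \eqref{tetadef} via the chain rule and invoking $\lim_{y\to\infty}y\vartheta'(y)=0$ from \eqref{eq2} (or \eqref{eq4} if $c=1$), together with the decay of $\varphi(x)/x$ in \eqref{ratefi}.

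For $i=2$, differentiate $h'(\varphi(x))\varphi'(x)=1$ to get $\varphi''(x)=-h''(\varphi(x))\varphi'(x)^3$, then substitute \eqref{heq} (with $i=2$ and $x$ replaced by $\varphi(x)$) and combine with the $i=1$ identity, using $h'(\varphi(x))\varphi'(x)=1$ to absorb one factor of $\varphi'$. This yields
\[
x\varphi''(x)=-(c-1+\vartheta_2(\varphi(x)))(\gamma+\theta(x))\varphi'(x),
\]
which has the desired form with $\beta_2=\gamma-1$ and
\[
\theta_2(x)=-(c-1)\theta(x)-\vartheta_2(\varphi(x))\bigl(\gamma+\theta(x)\bigr);
\]
the constant terms cancel since $-(c-1)\gamma=\gamma-1=\beta_2$. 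For $c>1$, the convergence $\theta_2(x)\to0$ follows from $\vartheta_2\circ\varphi\to0$ and $\theta\to0$. For $c=1$ we have $\beta_2=0$, and \eqref{vt2c1} produces the splitting $\theta_2=\sigma\tau$ with $\sigma(x)=\vartheta(\varphi(x))$ and $\tau(x)=-\varrho(\varphi(x))(1+\theta(x))$; the monotonicity, decay, doubling property and lower bound $\sigma^{-1}\lesssim_\varepsilon x^\varepsilon$ of $\sigma$ descend from the corresponding properties of $\vartheta$ in Remark \ref{rem:1} together with \eqref{ratefi}, \eqref{compfi}, and \eqref{eq3}, while the bounds $c_3\le-\tau\le c_4$ come from $c_1\le\varrho\le c_2$ (Lemma \ref{filem}) and $\theta\to0$.

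For $i=3$, differentiate the identity from the previous step:
\[
x\varphi'''(x)=\varphi''(x)\bigl(\beta_2-1+\theta_2(x)\bigr)+\varphi'(x)\theta_2'(x).
\]
Since $\beta_2-1=\beta_3$, this rearranges to $\theta_3(x)=\theta_2(x)+\varphi'(x)\theta_2'(x)/\varphi''(x)$. When $c>1$, the $i=2$ identity gives $\varphi''/\varphi'\sim\beta_2/x$, so the additional term is comparable to $x\theta_2'(x)/\beta_2$, and vanishes at infinity by the chain rule combined with $\lim_{y\to\infty}y\vartheta_2'(y)=0$, which is obtained by differentiating the recursion \eqref{heq1} and applying \eqref{eq2}. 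When $c=1$, $\varphi''/\varphi'=\sigma\tau/x$, so the extra term has magnitude comparable to $x\theta_2'(x)/\theta_2(x)$, and this is precisely the refined limit \eqref{fiequat1} that must be verified; it follows from \eqref{eq4} (which gives $y\vartheta'(y)/\vartheta(y)\to0$) via the chain rule applied to $\sigma=\vartheta\circ\varphi$ and $\tau=-\varrho(\varphi)(1+\theta)$. The estimate $x\theta_3'(x)\to0$ is obtained by differentiating the formula for $\theta_3$ once more and appealing to $\lim_{y\to\infty}y^3\vartheta'''(y)=0$ in \eqref{eq2} (resp.\ \eqref{eq4}).

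The main obstacle is the bookkeeping of the derivative decay conditions: the hypotheses \eqref{eq2} (and the stronger \eqref{eq4} when $c=1$) on $\vartheta$ and its first three derivatives must be pushed through composition with $\varphi$ and the algebraic manipulations above at each level of the induction, and in the $c=1$ case the refined quotient estimate $x\theta_2'/\theta_2\to0$ requires specifically the quotient form of the limits in \eqref{eq4}. Apart from this, no new ideas beyond Lemma \ref{filem}, Lemma \ref{formfunlem}, and the chain rule are needed.
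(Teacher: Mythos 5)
Your proposal is correct, and at its core it is the same verification as the paper's: write each $\te_i$ as an explicit expression in $\vt,\vt',\vt'',\vt'''$ composed with $\vp$, obtain level $i=3$ from the recursion $\te_3=\te_2+\frac{x\te_2'}{\b_2+\te_2}$ gotten by differentiating \eqref{fiequat} at level $2$, and push the limits \eqref{eq2} (resp.\ \eqref{eq4}) through the chain rule. The one place you genuinely deviate is $i=2$: you transfer \eqref{heq} for $h''$ through the inverse-function identity $h'(\vp(x))\vp'(x)=1$, getting $\te_2(x)=-(c-1)\te(x)-\vt_2(\vp(x))(\g+\te(x))$, whereas the paper differentiates the $i=1$ identity for $\vp$ directly and records $\te_2(x)=\Theta_2(\vp(x))$ with $\Theta_2$ written purely in terms of $\vt,\vt'$ as in \eqref{te2}; a one-line computation using $\vt_2=\vt+\frac{y\vt'}{c+\vt}$ shows the two expressions define the same function, so nothing changes downstream. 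Similarly, for $c=1$ your factorization $\s=\vt\circ\vp$, $\t=-\vr(\vp(x))(1+\te(x))$ coincides with the paper's, reached by quoting $\vt_2=\vt\vr$ from \eqref{vt2c1} instead of factoring $\vt(\vp(x))$ out of the explicit formula by hand; this buys slightly less bookkeeping, since the decay of $y\vt_2'(y)$ and of $y\vr'(y)$ needed for $\te_3$ and for $x\te_2'/\te_2\to0$ can be imported from Lemma \ref{filem} and \eqref{vt2c1} rather than recomputed, while the paper's fully explicit $\Theta_2,\Theta_3$ make the same limits visible by inspection. One small caution: the doubling $\s(2x)\simeq\s(x)$ really rests on the quotient limit $\lim_{y\to\8}\frac{y\vt'(y)}{\vt(y)}=0$ from \eqref{eq4} applied via the mean value theorem (as in the paper), not on \eqref{eq3}; a decreasing $\vt$ with $\vt(y)^{-1}\lesssim_{\e}y^{\e}$ alone need not be doubling, so in that step you should cite \eqref{eq4} explicitly rather than \eqref{eq3}.
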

\begin{proof}
The proof is based on simple computations. However, for the convenience of the reader we shall present the details. In fact, \eqref{fiequat} for $i=1$ with $\te_1(x)=\te(x)$, has been shown in  Lemma \ref{formfunlem}. Arguing likewise in the proof of Lemma \ref{filem} we obtain \eqref{fiequat} for $i=2, 3$. More precisely,
  \begin{align}\label{te1}
    \te_1(x)=\te(x)=-\frac{\vartheta(\vp(x))}{c(c+\vartheta(\vp(x)))}=\frac{1}{c+\vartheta(\vp(x))}-\g,
  \end{align}
  and $\te_1'(x)=\te'(x)=\left(\frac{1}{c+\vartheta(\vp(x))}-\g\right)'
  =-\frac{\vartheta'(\vp(x))\vp'(x)}{(c+\vartheta(\vp(x)))^2}.$ Thus
  \begin{align}\label{te2}
  \te_2(x)&=\te(x)+\frac{x\te'(x)}{\g+\te(x)}=\frac{1}{c+\vartheta(\vp(x))}-\g-
  \frac{\vartheta'(\vp(x))\vp(x)}{(c+\vartheta(\vp(x)))^2}=\Theta_2(\vp(x)),
\end{align}
where $\Theta_2(x)=\frac{1}{c+\vartheta(x)}-\g-\frac{\vartheta'(x)x}{(c+\vartheta(x))^2}$ and $\te_2'(x)=\Theta_2'(\vp(x))\vp'(x)$ with
\begin{align*}
  \Theta_2'(x)=\left(\frac{1}{c+\vartheta(x)}-\g-
  \frac{\vartheta'(x)x}{(c+\vartheta(x))^2}\right)'
  =-\frac{(\vt''(x)x+2\vt'(x))(c+\vartheta(x))
  -2\vt'(x)^2x}{(c+\vartheta(x))^3},
\end{align*}
\begin{align*}
  x\te_2'(x)=\frac{\Theta_2'(\vp(x))\vp(x)}{c+\vt(\vp(x))}=-\frac{(\vt''(\vp(x))\vp(x)^2+2\vt'(\vp(x))\vp(x))(c+\vartheta(\vp(x)))
  -2\vt'(\vp(x))^2\vp(x)^2}{(c+\vartheta(\vp(x)))^4}.
\end{align*}
Finally,
\begin{align}\label{te3}
  \te_3(x)=\te(x)+\frac{x\te'(x)}{\g+\te(x)}+\frac{x\te_2'(x)}{\g-1+\te_2(x)}=\te_2(x)+\frac{x\te_2'(x)}{\g-1+\te_2(x)},
\end{align}
Therefore, $\te_3(x)$ can be rewritten as $\te_3(x)=\Theta_3(\vp(x))$ where
\begin{align*}
  \Theta_3(x)&=\frac{1}{c+\vartheta(x)}-\g-\frac{\vartheta'(x)x}{(c+\vartheta(x))^2}-
  \frac{\frac{(\vt''(x)x^2+2\vt'(x)x)(c+\vartheta(x))
  -2\vt'(x)^2x^2}{(c+\vartheta(x))^4}}{\frac{1}{c+\vartheta(x)}-1-\frac{\vartheta'(x)x}{(c+\vartheta(x))^2}}\\
  &=\Theta_2(x)-\frac{(\vt''(x)x^2+2\vt'(x)x)}{(c+\vartheta(x))^2-(c+\vartheta(x))^3-\vartheta'(x)x(c+\vartheta(x))}\\
  &+\frac{2\vt'(x)^2x^2}{(c+\vartheta(x))^3-(c+\vartheta(x))^4-\vartheta'(x)x(c+\vartheta(x))^2},
\end{align*}
and $\te_3'(x)=\Theta_3'(\vp(x))\vp'(x)$ where
\begin{multline}\label{Tet3}
   \Theta_3'(x)=\Theta_2'(x)-\frac{\vt'''(x)x^2+4\vt''(x)x+2\vt'(x)}{(c+\vartheta(x))^2(1-c-\vartheta(x))-\vartheta'(x)x(c+\vartheta(x))}\\
   +\frac{(\vt''(x)x^2+2\vt'(x)x)((c+\vartheta(x))\vt'(x)-3(c+\vartheta(x))^2\vt'(x)
   -\vt''(x)x(c+\vartheta(x))-\vt'(x)^2x)}{(((c+\vartheta(x))^2(1-c-\vartheta(x))-\vartheta'(x)x(c+\vartheta(x)))^2}\\
   +\frac{4\vt'(x)x(\vt''(x)x+\vt'(x))}{(c+\vartheta(x))^3(1-c-\vartheta(x))-\vartheta'(x)x(c+\vartheta(x))^2}\\
   -\frac{2\vt'(x)^2x^2(2(c+\vartheta(x))^2\vt'(x)-4(c+\vartheta(x))^3\vt'(x)
   -\vartheta''(x)x(c+\vartheta(x))^2-2\vartheta'(x)^2x(c+\vartheta(x)))}
   {((c+\vartheta(x))^3(1-c-\vartheta(x))-\vartheta'(x)x(c+\vartheta(x))^2)^2}.
\end{multline}
These computations and \eqref{eq2} yield $\lim_{x\to\8}\te_i(x)=0$ and $\lim_{x\to\8}x\te_i'(x)=0$ for every $i=1, 2, 3$. The proof will be completed, if we elaborate the case $c=1$. We know that $x\vp''(x)=\vp'(x)\te_2(x)$,
with
\begin{align*}
  \te_2(x)&=-\frac{\vartheta(\vp(x))}{1+\vartheta(\vp(x))}-
  \frac{\vartheta'(\vp(x))\vp(x)}{(1+\vartheta(\vp(x)))^2}
  =\vartheta(\vp(x))\left(-\frac{1}{1+\vartheta(\vp(x))}-
  \frac{\vartheta'(\vp(x))\vp(x)}{\vartheta(\vp(x))(1+\vartheta(\vp(x)))^2}\right).
\end{align*}
Therefore \eqref{fiequat1} is proved with $\s(x)=\vt(\vp(x))$ and
$$\t(x)=-\left(\frac{1}{1+\vartheta(\vp(x))}+
\frac{\vartheta'(\vp(x))\vp(x)}{\vartheta(\vp(x))(1+\vartheta(\vp(x)))^2}\right).$$
In order to show that $\s(2x)\simeq\s(x)$ it is enough to prove that $\vt(2x)\simeq\vt(x)$.
Notice that for some $\xi_x\in(0, 1)$ we have
\begin{align*}
  \left|\frac{\vt(2x)}{\vt(x)}-1\right|=\left|\frac{(x+\xi_xx)\vt'(x+\xi_xx)}{\vt(x+\xi_xx)}\right|
  \frac{x}{x+\xi_xx}\frac{\vt(x+\xi_xx)}{\vt(x)}\le\left|\frac{(x+\xi_xx)\vt'(x+\xi_xx)}{\vt(x+\xi_xx)}\right|
  \ _{\overrightarrow{x\to\8}}\ 0,
\end{align*}
since $\vt(x)$ is decreasing.
It is easy to see that $$\s(x)^{-1}\lesssim x^{\e}, \ \ \mbox{for every $\e>0$,}$$
since $\vt(x)^{-1}\lesssim_{\varepsilon}x^{\varepsilon}$ for every $\varepsilon>0$ and by \eqref{ratefi}. Furthermore, there exist $0<c_3\le c_4$ such that $c_3\le -\t(x)\le c_4$ for every $x\ge h(x_0)$, by \eqref{eq4}. The only what is left is to verify that $\lim_{x\to\8}\frac{x\te_2'(x)}{\te_2(x)}=0$ and $\lim_{x\to\8}x\te_3'(x)=0$. Indeed,
\begin{align*}
  \lim_{x\to\8}\frac{x\te_2'(x)}{\te_2(x)}=\lim_{x\to\8}\frac{\frac{(\vt''(\vp(x))\vp(x)^2+2\vt'(\vp(x))\vp(x))
  (1+\vartheta(\vp(x)))
  -2\vt'(\vp(x))^2\vp(x)^2}{\vartheta(\vp(x))(1+\vartheta(\vp(x)))^4}}{\frac{1}{1+\vartheta(\vp(x))}+
  \frac{\vartheta'(\vp(x))\vp(x)}{\vartheta(\vp(x))(1+\vartheta(\vp(x)))^2}}=0.
\end{align*}
In order to show that $\lim_{x\to\8}x\te_3'(x)=0$ it suffices to prove that
\begin{align*}
  \lim_{x\to\8}x\te_3'(x)=\lim_{x\to\8}\frac{\Theta_3(\vp(x))\vp(x)}{1+\vt(\vp(x))}=0,
\end{align*}
but this follows from \eqref{eq4} and \eqref{Tet3}, since $\lim_{x\to\8}x\Theta_3'(x)=0$. This completes the proof.
\end{proof}

\section{Estimates for some exponential sums}\label{section3}
The aim of this section is to establish Lemma \ref{vdc:lem1} and \ref{vdc:lem2} which will be essential for us and will be applied repeatedly in the sequel. Both proofs are based on Van der Corput's type estimates. In this section we will assume that $c\in[1, 4/3)$, $\g=1/c$, $h\in\mathcal{F}_c$ and $\vp$ is the inverse function to $h$.
 \begin{lem}[Van der Corput]\label{vdc}
Assume that $a, b\in\R$ and $a<b$. Let $F\in\mathcal{C}^2([a, b])$ be a real valued function and let $I$ be a subinterval of $[a, b]$. If there exists $\lambda>0$ and $r\ge 1$  such that
\begin{align*}
  \lambda\lesssim |F''(x)|\lesssim r\lambda,\ \ \mbox{for every \ $x\in I$,}
\end{align*}
then
$$\Big|\sum_{k\in I}e^{2\pi i F(k)}\Big|\lesssim r|I|\lambda^{1/2}+\lambda^{-1/2}.$$
\end{lem}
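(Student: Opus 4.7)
The plan is to derive Lemma \ref{vdc} via the classical reduction to a first-derivative test. Up to replacing $F$ by $-F$, I may assume $F''(x)>0$ on $I$, so that $F'$ is strictly increasing there. Since the trivial bound $|\sum_{k\in I}\ep{F(k)}|\le |I|+1$ already yields the conclusion when $\la\ge 1/4$ (as $r\ge 1$), I may also assume $\la<1/4$. The first step is to partition $I$ into the sub-intervals
$$J_m=\{x\in I\colon F'(x)\in(m-\tfrac12,m+\tfrac12]\},\qquad m\in\Z,$$
and observe that the number of non-empty $J_m$ is at most $|F'(\sup I)-F'(\inf I)|+1\le r\la|I|+1$, using the upper bound $|F''|\lesssim r\la$ together with $F'(\sup I)-F'(\inf I)=\int_I F''(x)\,dx$.

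On each $J_m$, for integer $k$, I rewrite $\ep{F(k)}=\ep{G_m(k)}$ with $G_m(x):=F(x)-mx$. Then $|G_m'(x)|\le 1/2$ and $G_m''(x)\simeq \la$ on $J_m$. The goal is to establish the single-piece estimate
$$\Big|\sum_{k\in J_m}\ep{G_m(k)}\Big|\lesssim\la^{-1/2},$$
since summing this over the $\lesssim r\la|I|+1$ nonempty $J_m$ produces the claimed bound $r|I|\la^{1/2}+\la^{-1/2}$.

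To establish the single-piece estimate, I would split $J_m=J_m'\cup J_m''$ where $J_m'=\{x\in J_m\colon |G_m'(x)|<\la^{1/2}\}$. Because $G_m'$ is strictly monotone with $G_m''\simeq \la$, the set $J_m'$ is an interval on which $G_m'$ sweeps through an arc of length $2\la^{1/2}$ at rate comparable to $\la$, hence has length $\lesssim \la^{-1/2}$, and the trivial bound handles the corresponding partial sum. The complement $J_m''$ consists of at most two intervals on which $G_m'$ is monotone and satisfies $\la^{1/2}\le |G_m'(x)|\le 1/2$; in particular the distance from $G_m'(x)$ to $\Z$ is $\ge\la^{1/2}$. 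The Kusmin-Landau first-derivative test then yields the same bound $\lesssim \la^{-1/2}$ on each component of $J_m''$.

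The main technical point I expect is the bookkeeping needed to verify that Kusmin-Landau is applicable on each monotone component of $J_m''$ (including boundary $J_m$'s that are truncated by the endpoints of $I$, for which a single component may already be empty) and to combine the pieces cleanly; after that the conclusion is a straightforward application of the two classical bounds together with the count of non-empty $J_m$'s.
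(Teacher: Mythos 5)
The paper offers no proof of Lemma \ref{vdc} at all: it is quoted from \cite{IK} (Corollary 8.13, p.~208). What you write is essentially the standard argument behind that citation — partition $I$ according to which interval $(m-\tfrac12,m+\tfrac12]$ contains $F'$, subtract the linear phase $mx$, apply the Kusmin--Landau first-derivative test where $|G_m'|\ge\lambda^{1/2}$, use the trivial count on the short transition set where $|G_m'|<\lambda^{1/2}$, and multiply by the number $\lesssim r\lambda|I|+1$ of pieces. The core of this is correct: the transition set is a single subinterval of length $\lesssim\lambda^{-1/2}$ (since $G_m'$ moves at speed $\simeq\lambda$ through an arc of length $2\lambda^{1/2}$), so its trivial bound is $\lesssim\lambda^{-1/2}+1\lesssim\lambda^{-1/2}$ precisely because you have reduced to $\lambda<1/4$; and Kusmin--Landau is legitimately applicable on the at most two remaining monotone components, where the distance of $G_m'$ to $\mathbb{Z}$ equals $|G_m'|\in[\lambda^{1/2},\tfrac12]$. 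Summing $\lesssim\lambda^{-1/2}$ over $\lesssim r\lambda|I|+1$ pieces gives exactly $r|I|\lambda^{1/2}+\lambda^{-1/2}$.

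The one step that does not hold as stated is the opening reduction: the trivial bound $|I|+1$ is dominated by $r|I|\lambda^{1/2}+\lambda^{-1/2}$ for $\lambda\ge 1/4$ only when $|I|\gtrsim1$. If $|I|<1$ and $\lambda$ is large, both terms on the right can be much smaller than $1$, while the sum may consist of a single unimodular term; indeed in that corner the inequality of the lemma itself fails (take $F(x)=\lambda x^2/2$ and $I$ a tiny interval around an integer, $\lambda\to\infty$), so no argument can close it. This is an artifact of the loose formulation common in the literature rather than a defect of your main construction, and it is immaterial for the paper: in Lemmas \ref{vdc:lem1} and \ref{vdc:lem2} the right-hand sides are always $\gtrsim1$, so the corner case is absorbed by the trivial bound there. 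To make your write-up airtight, replace ``$\lambda\ge1/4$ is handled by the trivial bound'' with the correct dichotomy: dispose of the case $|I|\ge1$, $\lambda\ge1/4$ trivially (then $|I|+1\le 2|I|\lesssim r|I|\lambda^{1/2}$), run your argument for $\lambda<1/4$, and either add the hypothesis excluding the remaining corner or note explicitly that it is the regime in which the stated bound is not claimed (equivalently, state the conclusion with an extra $+1$ on the right, which is how it is used here).
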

Proof of Lemma \ref{vdc} can be found in \cite{IK}, see Corollary 8.13, page 208. Lemma \ref{vdc:lem1} is a rather straightforward application of Lemma \ref{vdc}, whereas the estimate given in Lemma \ref{vdc:lem2} is more involved and its proof will explore brilliant ideas from \cite{UZ}.

Throughout the paper, we will use the following version  of summation by parts.
\begin{lem}\label{sbp}
Let $u(n)$ and $g(n)$ be arithmetic functions and $a, b\in\Z$ such that $0\le a<b$. Define the sum function
$U_a(t)=\sum_{a+1\le n\le t}u(n), \ \ \mbox{for any \ $t\ge a+1$}.$ Then
 \begin{align}\label{sbp1}
  \sum_{n=a+1}^bu(n)g(n)=U_a(b)g(b)-\sum_{n=a+1}^{b-1}U_a(n)(g(n+1)-g(n)).
\end{align}
Let $x$ and $y$ be real numbers such that $0\le y<x$. If $g\in\mathcal{C}^1([y, x])$, then
\begin{align}\label{sbp2}
 \sum_{y<n\le x}u(n)g(n)=U_{\lfloor y\rfloor}(x)g(x)-\int_y^xU_{\lfloor y\rfloor}(t)g'(t)dt.
\end{align}
\end{lem}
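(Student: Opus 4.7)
The statement is the standard Abel summation formula in its discrete and continuous forms, so the plan is essentially bookkeeping rather than any deep argument. The only care needed is handling the non-integer endpoints $y$ and $x$ in part \eqref{sbp2}.

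\textbf{Part \eqref{sbp1}.} The plan is the textbook telescoping argument. Write $u(n) = U_a(n) - U_a(n-1)$ for $a+1 \le n \le b$, with the convention $U_a(a) = 0$ (empty sum). Substitute this into $\sum_{n=a+1}^b u(n) g(n)$ and split into two sums. Reindex the second sum by $n \mapsto n+1$ to obtain
\begin{equation*}
\sum_{n=a+1}^b U_a(n) g(n) - \sum_{n=a}^{b-1} U_a(n) g(n+1).
\end{equation*}
Peel off the top term $U_a(b) g(b)$ from the first sum and the bottom term $U_a(a) g(a+1) = 0$ from the second, then recombine the remaining ranges $a+1 \le n \le b-1$ to produce $-\sum_{n=a+1}^{b-1} U_a(n)(g(n+1) - g(n))$. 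This yields \eqref{sbp1}.

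\textbf{Part \eqref{sbp2}.} The plan is to reduce to part \eqref{sbp1} by setting $a = \lfloor y \rfloor$ and $b = \lfloor x \rfloor$, noting that $\{n \in \Z : y < n \le x\} = \{n \in \Z : \lfloor y \rfloor + 1 \le n \le \lfloor x \rfloor\}$, so that
\begin{equation*}
\sum_{y < n \le x} u(n) g(n) = U_{\lfloor y \rfloor}(\lfloor x \rfloor) g(\lfloor x \rfloor) - \sum_{n=\lfloor y \rfloor + 1}^{\lfloor x \rfloor - 1} U_{\lfloor y \rfloor}(n)(g(n+1) - g(n)).
\end{equation*}
The remaining work is to rewrite this expression in the target form. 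I would use two observations: $t \mapsto U_{\lfloor y \rfloor}(t)$ is constant on each half-open interval $[n, n+1)$, and it vanishes on $[y, \lfloor y \rfloor + 1)$. Hence for $a+1 \le n \le b-1$,
\begin{equation*}
U_{\lfloor y \rfloor}(n)(g(n+1) - g(n)) = \int_n^{n+1} U_{\lfloor y \rfloor}(t) g'(t)\, dt.
\end{equation*}
Summing over $n$ and splitting $\int_y^x = \int_y^{\lfloor y \rfloor + 1} + \int_{\lfloor y \rfloor + 1}^{\lfloor x \rfloor} + \int_{\lfloor x \rfloor}^x$, the first integral is $0$ (by the vanishing), the middle integral equals the sum over $n$, and the last integral equals $U_{\lfloor y \rfloor}(x)(g(x) - g(\lfloor x \rfloor))$ since $U_{\lfloor y \rfloor}(t) = U_{\lfloor y \rfloor}(\lfloor x \rfloor) = U_{\lfloor y \rfloor}(x)$ on $[\lfloor x \rfloor, x]$. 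Substituting back and using $U_{\lfloor y \rfloor}(\lfloor x \rfloor) = U_{\lfloor y \rfloor}(x)$ collapses everything to \eqref{sbp2}.

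\textbf{Main obstacle.} There is no serious obstacle; the only subtlety is ensuring the endpoint contributions match. In particular one must verify that replacing $\lfloor x \rfloor$ by $x$ in the boundary term is compensated exactly by the tail integral $\int_{\lfloor x \rfloor}^x$, and that the integral $\int_y^{\lfloor y \rfloor + 1}$ contributes nothing — both of which follow from the piecewise-constant structure of $U_{\lfloor y \rfloor}$.
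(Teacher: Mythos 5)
Your argument is correct: the telescoping identity for \eqref{sbp1} and the reduction of \eqref{sbp2} to \eqref{sbp1} via the piecewise-constant extension of $U_{\lfloor y\rfloor}$ (with the endpoint integrals over $[y,\lfloor y\rfloor+1)$ and $[\lfloor x\rfloor,x]$ accounting exactly for the boundary discrepancies) is the standard Abel summation proof. The paper itself gives no proof, deferring to Theorem A.4 of Nathanson's book, and your argument is essentially that standard one; the only thing left implicit is the degenerate case $\lfloor y\rfloor=\lfloor x\rfloor$, where both sides of \eqref{sbp2} vanish trivially.
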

We encourage the reader to compare Lemma \ref{sbp} with \cite{Nat} Theorem A.4, page 304. In the sequel we will use the following identity.
\begin{align}\label{functfi}
  t^2\vp''(t)=\left\{ \begin{array} {ll}
\vp(t)(\g+\te_1(t))(\g-1+\te_2(t)), & \mbox{if $c>1$,}\\
\vp(t)(\g+\te_1(t))\s(t)\t(t), & \mbox{if $c=1$.}
\end{array}
\right.
\end{align}
\begin{lem}\label{vdc:lem1}
Assume that $N\ge1$, $x\in\Z$, $\a\in[0, 1]$, $m\in\Z\setminus\{0\}$, $l\ge1$ and $p, q\in\{0, 1\}$. If $N_{1, x}=\max\{N/2, N/2-x\}$, $N_{2, x}=\min\{4N, 4N-x\}$ then
\begin{align}\label{vdc:lem1e1}
 \bigg|\sum_{N_{1, x}<n\le N'\le N_{2, x}}e^{2\pi i(\a ln+m\vp(n+px+q))}\bigg|\lesssim |m|^{1/2}N\big(\vp(N)\s(N)\big)^{-1/2}.
\end{align}
For $c>1$ (see Section \ref{section2}) $\s$ is constantly equal to $1$.
\end{lem}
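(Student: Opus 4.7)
The approach is a direct application of Van der Corput's Lemma \ref{vdc} to the phase $F(n) = \alpha l n + m\varphi(n + px + q)$. Since the linear term contributes nothing to the second derivative, $F''(n) = m\varphi''(n + px + q)$, and the whole problem reduces to establishing two-sided bounds on $|\varphi''|$ throughout the summation range.

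First I would localize the argument of $\varphi''$. The definitions $N_{1,x} = \max\{N/2, N/2 - x\}$ and $N_{2,x} = \min\{4N, 4N - x\}$ are tailored so that, whenever the summation interval is nonempty, $n \in (N/2, 4N]$ and $n + x \in (N/2, 4N]$ simultaneously. Hence for every $(p,q) \in \{0,1\}^2$ the value $n + px + q$ lies in $(N/2, 4N + 1]$, an interval comparable to $[N, 4N]$. By \eqref{compfi} applied a bounded number of times this yields $\varphi(n + px + q) \simeq \varphi(N)$; similarly, in the case $c = 1$, Lemma \ref{funlemfi} gives $\sigma(n + px + q) \simeq \sigma(N)$, while for $c > 1$ we set $\sigma \equiv 1$.

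Second I would feed these estimates into the identity \eqref{functfi}. For $c \in (1, 4/3)$ the factor $(\gamma + \theta_1(t))(\gamma - 1 + \theta_2(t))$ tends to the fixed nonzero limit $\gamma(\gamma - 1)$ as $t \to \infty$ (since $\theta_1, \theta_2 \to 0$ by Lemma \ref{funlemfi}), hence is bounded above and away from zero on $[N/2, 4N + 1]$ for $N$ large. For $c = 1$ the factor $(\gamma + \theta_1(t))\tau(t)$ is likewise bounded above and away from zero, using $\theta_1 \to 0$ and $-\tau \in [c_3, c_4]$. Either way,
\begin{equation*}
|F''(n)| \;=\; |m|\,|\varphi''(n + px + q)| \;\simeq\; \frac{|m|\,\varphi(N)\sigma(N)}{N^2}
\end{equation*}
uniformly over the range of summation.

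Third I would apply Lemma \ref{vdc} on the subinterval $(N_{1,x}, N']$, which has length at most $7N/2$, with $\lambda := |m|\varphi(N)\sigma(N)/N^2$ and a universal constant $r$. This gives
\begin{equation*}
\bigg|\sum_{N_{1,x} < n \le N'} e^{2\pi i F(n)}\bigg| \;\lesssim\; N\lambda^{1/2} + \lambda^{-1/2} \;=\; |m|^{1/2}\bigl(\varphi(N)\sigma(N)\bigr)^{1/2} + \frac{N}{|m|^{1/2}\bigl(\varphi(N)\sigma(N)\bigr)^{1/2}}.
\end{equation*}
The second summand is bounded by $|m|^{1/2} N (\varphi(N)\sigma(N))^{-1/2}$ because $|m| \ge 1$. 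For the first summand, \eqref{ratefi} gives $\varphi(N)/N \to 0$ (and $\sigma(N) \le 1$), so $\varphi(N)\sigma(N) \le N$ for $N$ large, whence $(\varphi(N)\sigma(N))^{1/2} \le N(\varphi(N)\sigma(N))^{-1/2}$. For the finitely many remaining small values of $N$ the sum has $O(1)$ terms and the claim is trivial.

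The main obstacle is the bookkeeping: one has to verify carefully that all four cases of $(p, q) \in \{0,1\}^2$ and both signs of $x$ really do force $n + px + q$ into a window comparable to $N$ via the precise choices of $N_{1,x}$ and $N_{2,x}$, and one must confirm in both regimes $c > 1$ and $c = 1$ that the products appearing in \eqref{functfi} remain bounded above and away from zero, using the asymptotic properties of $\theta_i$, $\sigma$, and $\tau$ provided by Lemmas \ref{filem} and \ref{funlemfi}.
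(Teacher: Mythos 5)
Your proposal is correct and follows essentially the same route as the paper: both compute $F''(t)=m\vp''(t+px+q)$, use \eqref{functfi} together with $\vp(2x)\simeq\vp(x)$, $\s(2x)\simeq\s(x)$ to get $|F''|\simeq |m|\vp(N)\s(N)/N^2$ uniformly on the summation range, and then apply Lemma \ref{vdc} with this $\la$. Your explicit justification that the term $N\la^{1/2}=|m|^{1/2}(\vp(N)\s(N))^{1/2}$ is dominated by $|m|^{1/2}N(\vp(N)\s(N))^{-1/2}$ (via \eqref{ratefi}) is a step the paper absorbs silently, but it is the same argument.
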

\begin{proof}
We shall apply Lemma \ref{vdc} to the exponential sum in \eqref{vdc:lem1e1}. We can assume, without loss of generality, that $m>0$ and let  $F(t)=\a lt+m\vp(t+px+q)$
for $t\in (N_{1, x}, N_{2, x}]$. According to \eqref{functfi} we see that
\begin{align*}
  |F''(t)|=|m\vp''(t+px+q)|\simeq\frac{m\vp(t+px+q)\s(t+px+q)}{(t+px+q)^2}
  \simeq\frac{m\vp(N)\s(N)}{N^2},
\end{align*}
since $p, q\in\{0, 1\}$, $N/2<t+px+q\le5N$, $\vp(2x)\simeq\vp(x)$ and $\s(2x)\simeq\s(x)$. One can think that $\s$ is constantly equal to $1$, when $c>1$ (see Section \ref{section2}). Now by Lemma \ref{vdc}
we obtain
\begin{align*}
 \bigg|\sum_{N_{1, x}<n\le N'\le N_{2, x}}e^{2\pi i(\a ln+m\vp(n+px+q))}\bigg|&\lesssim
 N\cdot\frac{m^{1/2}\vp(N)^{1/2}\s(N)^{1/2}}{N}+\frac{N}{m^{1/2}\vp(N)^{1/2}\s(N)^{1/2}}\\
 &\lesssim m^{1/2}N\big(\vp(N)\s(N)\big)^{-1/2},
\end{align*}
and the proof of \eqref{vdc:lem1e1} follows.
\end{proof}

\begin{lem}\label{vdc:lem2}
Assume that $N\ge1$, $x\in\Z$, $\a\in[0, 1]$, $m_1, m_2\in\Z\setminus\{0\}$, and $l\ge1$. Let $N_{1, x}=\max\{N/2, N/2-x\}$, $N_{2, x}=\min\{4N, 4N-x\}$ and $m=\max\{|m_1|, |m_2|\}$. If $x\ge\vp(N)^{\kappa}$ for some $\kappa\in[0, 1]$, then
\begin{align}\label{vdc:lem2e1}
 \bigg|\sum_{N_{1, x}<n\le N'\le N_{2, x}}e^{2\pi i(\a ln+m_1\vp(n)+m_2\vp(n+x))}\bigg|\lesssim m^{2/3}N^{4/3}\s(N)^{-1/3}\vp(N)^{-(1+\kappa)/3}.
\end{align}
\end{lem}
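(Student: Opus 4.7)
The plan is to apply Van der Corput's A-process (Weyl differencing) to the exponential sum in $n$, and then to use Lemma \ref{vdc} (the B-process) on each of the resulting shift-differenced sums, optimizing over the shift parameter $H$.

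Writing $F(n) := \a l n + m_1\vp(n) + m_2\vp(n+x)$ for the phase in \eqref{vdc:lem2e1} and denoting the sum by $S$, the standard A-process (Cauchy--Schwarz in the shift $n\mapsto n+h$) gives, for any integer $H\in[1,N]$ to be chosen later,
\[
|S|^2 \lesssim \frac{N^2}{H} + \frac{N}{H}\sum_{h=1}^{H}|T_h|,\qquad T_h := \sum_{n} e^{2\pi i(F(n+h)-F(n))}.
\]
The heart of the matter is to estimate each $|T_h|$ via Lemma \ref{vdc}, applied to the shifted phase $G_h(n):=F(n+h)-F(n)$. A direct calculation yields
\[
G_h''(n)=m_1\bigl[\vp''(n+h)-\vp''(n)\bigr]+m_2\bigl[\vp''(n+h+x)-\vp''(n+x)\bigr],
\]
and the delicate point is that for $m_1\approx -m_2$ the two terms can almost cancel. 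To work around this, I would use the algebraic rewriting
\[
G_h''(n)=(m_1+m_2)\bigl[\vp''(n+h)-\vp''(n)\bigr]+m_2\bigl\{\vp''(n+h+x)-\vp''(n+x)-\vp''(n+h)+\vp''(n)\bigr\},
\]
in which the first bracket is $\simeq h\vp'''(n)$ and the second, being a mixed second--difference in $h$ and $x$, is $\simeq hx\vp^{(4)}(n)$. Using the size estimates $|\vp'''(t)|\simeq\vp(t)\s(t)/t^3$ and $|\vp^{(4)}(t)|\simeq\vp(t)\s(t)/t^4$ derived from Lemma \ref{funlemfi} (both brackets have the same sign, determined by that of $\vp'''$), one obtains, uniformly in $(m_1,m_2)$,
\[
mhx\,\vp(N)\s(N)/N^4 \ \lesssim\ |G_h''(n)|\ \lesssim\ mh\,\vp(N)\s(N)/N^3.
\]
Lemma \ref{vdc} with $\lambda:=mhx\,\vp(N)\s(N)/N^4$ and $r\simeq N/x$ then gives
\[
|T_h|\lesssim \bigl(mh\vp(N)\s(N)/x\bigr)^{1/2}+N^{2}\bigl(mhx\vp(N)\s(N)\bigr)^{-1/2}.
\]

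To conclude, I would sum over $h=1,\dots,H$, insert the result into the A-process bound, and choose $H$ to balance the three resulting error terms; the hypothesis $x\geq\vp(N)^\kappa$ enters precisely when converting the negative powers of $x$ into negative powers of $\vp(N)^\kappa$. After this optimization and some bookkeeping, the claimed bound $m^{2/3}N^{4/3}\s(N)^{-1/3}\vp(N)^{-(1+\kappa)/3}$ emerges.

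The principal obstacle is the cancellation in $G_h''(n)$: without exploiting the mixed-difference structure of the second bracket above, the lower bound on $|G_h''(n)|$ could drop essentially to zero when $m_1=-m_2$, rendering Lemma \ref{vdc} useless. The rewriting extracts a factor $hx$ that is present irrespective of the relative signs of $m_1$ and $m_2$, and it is exactly this factor, coupled with $x\geq\vp(N)^\kappa$, that produces the $\vp(N)^{-(1+\kappa)/3}$ improvement in the final estimate. Secondary care is needed when $c=1$, where Lemma \ref{funlemfi} supplies the extra $\s$--factors in the derivative estimates without changing the structure of the argument.
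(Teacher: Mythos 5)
There is a genuine gap, and it sits exactly at the point you flag as the ``principal obstacle.'' Your claimed uniform lower bound $mhx\,\vp(N)\s(N)/N^4\lesssim|G_h''(n)|$ is false. In your rewriting $G_h''(n)=(m_1+m_2)\bigl[\vp''(n+h)-\vp''(n)\bigr]+m_2\bigl\{\vp''(n+h+x)-\vp''(n+x)-\vp''(n+h)+\vp''(n)\bigr\}$ the two terms need not reinforce each other: the first bracket has the sign of $\vp'''$ while the mixed difference has the sign of $\vp^{(4)}$ (these are opposite for $\vp(t)\approx t^{\g}$, $0<\g<1$), and, more importantly, the integer coefficients $m_1+m_2$ and $m_2$ are unconstrained in sign and size. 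Writing $G_h''(n)=a\,(m_1+m_2\rho(n))$ with $a=\vp''(n+h)-\vp''(n)$ and $\rho(n)=1-\d(n)$, $\d(n)\simeq x/N$, one sees that whenever $|m_2|x/N\gtrsim1$ one can choose $m_1$ with $m_1+m_2$ equal to (roughly) the nearest integer to $m_2\d(n)$, and then $G_h''$ vanishes, or nearly vanishes, at some $n$ in the summation range --- e.g.\ $x\simeq N/2$, $m_2=10$, $m_1=-5$. This is precisely the same near-degeneracy that afflicts $F''(t)=m_1\vp''(t)+m_2\vp''(t+x)$ itself; differencing does not remove it, and without a lower bound valid on all but a controlled exceptional set, Lemma \ref{vdc} cannot be applied to $T_h$. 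The paper's proof confronts this head on: it works with $F''=\vp''(t)A(t)$, $A(t)=m_1+m_2\vp''(t+x)/\vp''(t)$, isolates a possible point $t_0$ where $A$ is small, proves $|A'(t)|\gtrsim x/N^2\ge\vp(N)^{\kappa}/N^2$ via the mean value theorem and Lemma \ref{funlemfi}, excises an interval of length $N_0$ around $t_0$ (estimated trivially), applies the second-derivative test on the two remaining pieces, and only then optimizes the exponents. Your argument has no analogue of this exceptional-interval step for the differenced phase, so the estimate for $T_h$ is unjustified.

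Two further problems, secondary but real. First, your size estimate $|\vp^{(4)}(t)|\simeq\vp(t)\s(t)/t^4$ is not available: Definition \ref{def:1} only assumes $h\in\mathcal{C}^3$ (and $\vt\in\mathcal{C}^3$), and Lemma \ref{funlemfi} controls $\vp^{(i)}$ only for $i\le3$, so $\vp^{(4)}$ need not even exist; the A-process route thus demands one more derivative than the paper's hypotheses provide, whereas the paper's proof uses nothing beyond $\vp'''$ and $\te_3$. Second, the final optimization over $H$ is not carried out, and it is not evident that it closes: even granting your bound for $T_h$, the term $N^3H^{-1/2}(mx\vp(N)\s(N))^{-1/2}$ arising from the A-process is not dominated by the target $m^{4/3}N^{8/3}\s(N)^{-2/3}\vp(N)^{-2(1+\kappa)/3}$ for the natural choices of $H$ (e.g.\ for $m$ bounded and $\vp(N)$ large), so the claimed exponents do not simply ``emerge'' and would need to be checked.
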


\begin{proof}
We shall apply Lemma \ref{vdc} to the exponential sum in \eqref{vdc:lem2e1}. Let $F(t)=\a lt+m_1\vp(t)+m_2\vp(t+x)$ for $t\in (N_{1, x}, N_{2, x}]$. Notice that according to \eqref{functfi} we have
\begin{align}\label{upest}
  |F''(t)|=|m_1\vp''(t)+m_2\vp''(t+x)|\lesssim \frac{m\vp(N)\s(N)}{N^2},
\end{align}
since $t, t+x\simeq N$, (if $c>1$ one can think that $\s$ is constantly equal to $1$). The lower bound for $|F''(t)|$ is much harder. We will follow the ideas from \cite{UZ} and we are going to prove that there exists $t_0\in(N_{1, x}, N_{2, x}]$ such that if
\begin{align*}
  |t-t_0|\ge N_0,\ \ \mbox{where}\ \ N_0=m^{a_1}N^{a_2}\s(N)^{-a_3}\vp(N)^{-a_4},
\end{align*}
for some $a_1, a_2, a_3, a_4\in\R$ which will be chosen later, then
\begin{align}\label{lowest}
  |F''(t)|\gtrsim m^{a_1}N^{a_2-4}\s(N)^{1-a_3}\vp(N)^{1+\kappa-a_4}.
\end{align}

Assume for a moment that \eqref{lowest} has been proved and let us finish the proof of \eqref{vdc:lem2e1}.
Combining \eqref{lowest} with \eqref{upest} we see
\begin{multline*}
  m^{a_1}N^{a_2-4}\s(N)^{1-a_3}\vp(N)^{1+\kappa-a_4}\lesssim|F''(t)|\\
  \lesssim
   m^{1-a_1}N^{2-a_2}\s(N)^{a_3}\vp(N)^{a_4-\kappa}\cdot m^{a_1}N^{a_2-4}\s(N)^{1-a_3}\vp(N)^{1+\kappa-a_4},
\end{multline*}
and Lemma \ref{vdc} can be applied with $r=m^{1-a_1}N^{2-a_2}\s(N)^{a_3}\vp(N)^{a_4-\kappa}$ and $\lambda=m^{a_1}N^{a_2-4}\s(N)^{1-a_3}\vp(N)^{1+\kappa-a_4}$. Indeed,
denote by $U(N')$ the sum in \eqref{vdc:lem2e1} and observe that
\begin{align*}
   |U(N')|
   \le \sum_{j=1}^3\bigg|\sum_{n\in A_j}e^{2\pi i(\a ln+m_1\vp(n)+m_2\vp(n+x))}\bigg|,
 \end{align*}
 where $A_1=(N_{1, x}, \min\{N', t_0-N_0\}]$, $A_2=(\max\{t_0+N_0, N_{1, x}\}, N']$ and $A_3=(\min\{N', t_0-N_0\}, \max\{t_0+N_0, N_{1, x}\}]$.
We shall apply Lemma \ref{vdc} to the first two sums, whereas the third one can be trivially estimated by $N_0$
if necessary, i.e. if $A_3\not=\emptyset$. Namely, we get
\begin{multline*}
  |U(N')|
  \lesssim N_0
  +N\cdot m^{1-a_1}N^{2-a_2}\s(N)^{a_3}\vp(N)^{a_4-\kappa}\cdot
  \big(m^{a_1}N^{a_2-4}\s(N)^{1-a_3}\vp(N)^{1+\kappa-a_4}\big)^{1/2}\\
  +\big(m^{a_1}N^{a_2-4}\s(N)^{1-a_3}\vp(N)^{1+\kappa-a_4}\big)^{-1/2}\\
  \lesssim N_0+m^{1-a_1/2}N^{1-a_2/2}\s(N)^{1/2+a_3/2}\vp(N)^{a_4/2+1/2-\kappa/2}\\
  +m^{-a_1/2}N^{2-a_2/2}\s(N)^{-1/2+a_3/2}\vp(N)^{a_4/2-1/2-\kappa/2}\\
  \lesssim N_0+m^{1-a_1/2}N^{2-a_2/2}\s(N)^{-1/2+a_3/2}\vp(N)^{a_4/2-1/2-\kappa/2}\\
  \lesssim m^{a_1}N^{a_2}\s(N)^{-a_3}\vp(N)^{-a_4}+m^{1-a_1/2}N^{2-a_2/2}\s(N)^{-1/2+a_3/2}\vp(N)^{a_4/2-1/2-\kappa/2}\\
  \lesssim m^{2/3}N^{4/3}\s(N)^{-1/3}\vp(N)^{-(1+\kappa)/3},
\end{multline*}
since the penultimate line forces some restrictions on $a_1, a_2, a_3, a_4$. Namely,
\begin{align*}
  a_1=1-a_1/2&\Longleftrightarrow a_1=2/3,\\
  a_2=2-a_2/2&\Longleftrightarrow a_2=4/3,\\
  -a_3=-1/2+a_3/2&\Longleftrightarrow a_3=1/3,\\
  -a_4=a_4/2-1/2-\kappa/2&\Longleftrightarrow a_4=(1+\kappa)/3.
\end{align*}
This proves \eqref{vdc:lem2e1}. Now what is left is to prove \eqref{lowest}. For this purpose we will proceed  as follows. Let
\begin{align*}
  F''(t)=\vp''(t)A(t),\ \ \mbox{where}\ \ A(t)=\left(m_1+m_2\frac{\vp''(t+x)}{\vp''(t)}\right).
\end{align*}
Let $\e'>0$ be a small enough real number whose precise value will be specified later.
If $|A(t)|\ge \e'm^{a_1}N^{a_2-2}\s(N)^{-a_3}\vp(N)^{\kappa-a_4}$ for every $t\in (N_{1, x}, N_{2, x}]$, then
\begin{align*}
  |F''(t)|=|\vp''(t)||A(t)|\gtrsim \e'm^{a_1}N^{a_2-4}\s(N)^{1-a_3}\vp(N)^{1+\kappa-a_4}.
\end{align*}
Assume now that there is some $t_0\in (N_{1, x}, N_{2, x}]$ such that $|A(t_0)|\le \e'm^{a_1}N^{a_2-2}\s(N)^{-a_3}\vp(N)^{\kappa-a_4}$. By the mean value theorem there is $\xi\in(0, 1)$ such that
\begin{align*}
 |A(t)-A(t_0)|=|t-t_0||A'(\xi_{t, t_0})|,
\end{align*}
where $\xi_{t, t_0}=t+\xi(t_0-t)$, if $t_0\ge t$ and $\xi_{t, t_0}=t_0+\xi(t-t_0)$, if $t_0<t$. In both cases
$\xi_{t, t_0}\simeq N$ and
$\xi_{t, t_0}+x\simeq N$, since $1\le\vp(N)^{\kappa}\le x\le 4N$.
Thus it is enough to estimate $|A'(t)|$ from below for any $t\simeq N$. Indeed, again by the mean value theorem, we see that for some $\xi_x\in(0, 1)$ we have
\begin{align*}
  A'(t)&=m_2\frac{\vp'''(t+x)\vp''(t)-\vp''(t+x)\vp'''(t)}{\vp''(t)^2}\\
  &=m_2\frac{\vp''(t+x)}{\vp''(t)}
  \left(\frac{\g-2+\te_3(t+x)}{t+x}-\frac{\g-2+\te_3(t)}{t}\right)\\
  &=xm_2\frac{\vp''(t+x)}{\vp''(t)}
  \left(\frac{2-\g-\te_3(t+\xi_xx)+
  (t+\xi_xx)\te_3'(t+\xi_xx)}{(t+\xi_xx)^2}\right).
\end{align*}
Therefore, there is a universal constant $C>0$ such that for sufficiently large $N\in\N$, by Lemma \ref{funlemfi}, we have
\begin{multline*}
  |A'(t)|
  \ge x\left|\frac{\vp''(t+x)}{\vp''(t)}\right|
  \left(\frac{2-\g}{(t+\xi_xx)^2}-\frac{|\te_3(t+\xi_xx)|+|(t+\xi_xx)\te_3'(t+\xi_xx)|}{(t+\xi_xx)^2}\right)\\
  \ge x\left|\frac{\vp''(t+x)}{\vp''(t)}\right|
  \frac{2-\g}{2(t+\xi_xx)^2}
  \ge C\frac{\vp(N)^{\kappa}}{N^2},
\end{multline*}
since $(t+\xi_xx)\simeq N$ and $|\te_3(t+\xi_xx)|+|(t+\xi_xx)\te_3'(t+\xi_xx)|\le (2-\g)/2$ for sufficiently large $N\in\N$.
This implies that, if $|t-t_0|\ge N_0=m^{a_1}N^{a_2}\s(N)^{-a_3}\vp(N)^{-a_4}$, then
\begin{align*}
 |A(t)-A(t_0)|=|t-t_0||A'(\xi_{t, t_0})|\ge C m^{a_1}N^{a_2-2}\s(N)^{-a_3}\vp(N)^{\kappa-a_4}.
\end{align*}
Finally, taking $\e'=C/2$, we obtain that
\begin{align*}
 |A(t)|\gtrsim m^{a_1}N^{a_2-2}\s(N)^{-a_3}\vp(N)^{\kappa-a_4},
\end{align*}
for every $|t-t_0|\ge N_0=m^{a_1}N^{a_2}\s(N)^{-a_3}\vp(N)^{-a_4}$ as desired and the proof of Lemma \ref{vdc:lem2} is completed.
\end{proof}

Now we have some refinements of Lemma \ref{vdc:lem1} and Lemma \ref{vdc:lem2}.
\begin{cor}\label{vdccor}
Assume that $N\ge1$, $x\in\Z$, $\a\in[0, 1]$, $m_1, m_2\in\Z\setminus\{0\}$, $l\ge1$ and $p, q\in\{0, 1\}$. Let $N_{1, x}=\max\{N/2, N/2-x\}$, $N_{2, x}=\min\{4N, 4N-x\}$. Then
\begin{multline}\label{vdccor1}
 \bigg|\sum_{N_{1, x}<n\le N'\le N_{2, x}}e^{2\pi i(\a ln+m_1\vp(n+px+q))}F_{m_1}^x(n)\bigg|\\
 \lesssim |m_1|^{1/2}N\big(\vp(N)\s(N)\big)^{-1/2}\Big(\sup_{n\in(N_{1, x}, N_{2, x}]}|F_{m_1}^x(n)|+N\sup_{n\in(N_{1, x}, N_{2, x}]}\left|F_{m_1}^x(n+1)-F_{m_1}^x(n)\right|\Big),
\end{multline}
where $F_{m_1}^x(n)$ is an arithmetic function.
If $x\ge\vp(N)^{\kappa}$ for some $\kappa\in[0, 1]$ and $m=\max\{|m_1|, |m_2|\}$, then
\begin{multline}\label{vdccor2}
 \bigg|\sum_{N_{1, x}<n\le N'\le N_{2, x}}e^{2\pi i(\a ln+m_1\vp(n)+m_2\vp(n+x))}F_{m_1, m_2}^x(n)\bigg|
 \lesssim m^{2/3}N^{4/3}\s(N)^{-\frac{1}{3}}\vp(N)^{-\frac{1+\kappa}{3}}\\
 \cdot\Big(\sup_{n\in(N_{1, x}, N_{2, x}]}|F_{m_1, m_2}^x(n)|
 +N\sup_{n\in(N_{1, x}, N_{2, x}]}\left|F_{m_1, m_2}^x(n+1)-F_{m_1, m_2}^x(n)\right|\Big),
\end{multline}
where $F_{m_1, m_2}^x(n)$ is an arithmetic function. For $c>1$ (see Section \ref{section2}) one may think that $\s$ is constantly equal to $1$.
\end{cor}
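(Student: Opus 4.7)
The plan is to deduce both estimates from Lemma \ref{vdc:lem1} and Lemma \ref{vdc:lem2} respectively by a straightforward summation by parts, using the key observation that the bounds in those lemmas are uniform in the upper endpoint $N'\in (N_{1,x}, N_{2,x}]$. Indeed, both proofs apply Lemma \ref{vdc} on an arbitrary subinterval, so the same bound controls the partial sum function up to any intermediate point.

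For \eqref{vdccor1}, I would set
$$u(n)=e^{2\pi i(\a ln+m_1\vp(n+px+q))},\qquad g(n)=F_{m_1}^x(n),$$
and apply formula \eqref{sbp1} of Lemma \ref{sbp} with $a=\lfloor N_{1,x}\rfloor$ and $b=\lfloor N'\rfloor$. This expresses the sum in question as
$$U_a(b)g(b)-\sum_{n=a+1}^{b-1}U_a(n)\bigl(g(n+1)-g(n)\bigr),$$
where $U_a(t)=\sum_{a+1\le n\le t}u(n)$. By Lemma \ref{vdc:lem1} applied with upper limit $t$ in place of $N'$, we have the uniform bound
$$\sup_{a+1\le t\le N_{2,x}}|U_a(t)|\lesssim |m_1|^{1/2}N\bigl(\vp(N)\s(N)\bigr)^{-1/2}.$$
The boundary term $U_a(b)g(b)$ is therefore bounded by this quantity times $\sup_n|F_{m_1}^x(n)|$, and the telescoping sum is bounded by the same quantity times the number of terms (which is $O(N)$) times $\sup_n|F_{m_1}^x(n+1)-F_{m_1}^x(n)|$. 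Adding the two contributions yields \eqref{vdccor1}.

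The proof of \eqref{vdccor2} is identical, with $u(n)=e^{2\pi i(\a ln+m_1\vp(n)+m_2\vp(n+x))}$ and $g(n)=F_{m_1,m_2}^x(n)$, except that the uniform partial-sum bound is now supplied by Lemma \ref{vdc:lem2}, giving
$$\sup_{a+1\le t\le N_{2,x}}|U_a(t)|\lesssim m^{2/3}N^{4/3}\s(N)^{-1/3}\vp(N)^{-(1+\kappa)/3}.$$
Combining the boundary term (which contributes $\sup_n|F_{m_1,m_2}^x(n)|$) with the discrete-derivative sum (which contributes $N\sup_n|F_{m_1,m_2}^x(n+1)-F_{m_1,m_2}^x(n)|$) yields \eqref{vdccor2}.

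There is no real obstacle here: the only thing to check is the uniformity in $N'$ of the van der Corput bounds from Lemma \ref{vdc:lem1} and Lemma \ref{vdc:lem2}, which is built into their statements since the subinterval on which Lemma \ref{vdc} is applied is arbitrary inside $(N_{1,x}, N_{2,x}]$. Everything else is a mechanical application of Abel summation, so no further delicate analysis is needed.
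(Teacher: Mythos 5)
Your proposal is correct and coincides with the paper's own proof: both apply the summation-by-parts identity \eqref{sbp1} of Lemma \ref{sbp} with the exponential as $u(n)$ and the arithmetic weight as $g(n)$, and then invoke the uniformity in the upper endpoint $N'$ of the bounds in Lemma \ref{vdc:lem1} and Lemma \ref{vdc:lem2} to control the boundary term and the $O(N)$-term telescoping sum. Nothing further is needed.
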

\begin{proof}
Let $U(N')$ denote the sum in \eqref{vdccor1} or \eqref{vdccor2} and $U_{N_{1, x}}(N')$ denotes the sum in \eqref{vdc:lem1e1} or \eqref{vdc:lem2e1} respectively. Finally, let $F(n)=F_{m_1}^x(n)$ or $F(n)=F_{m_1, m_2}^x(n)$. It is enough to apply \eqref{sbp1} to  $U(N')$. Namely, we have
\begin{align*}
  |U(N')|
  &\le\sup_{N'\in(N_{1, x}, N_{2, x}]}|U_{N_{1, x}}(N')F(N')|+\sum_{n=\lfloor N_{1, x}\rfloor+1}^{\lfloor N_{2, x}\rfloor}|U_{N_{1, x}}(n)|\left|F(n+1)-F(n)\right|\\
  &\lesssim\sup_{N'\in(N_{1, x}, N_{2, x}]}|U_{N_{1, x}}(N')|\Big(\sup_{n\in(N_{1, x}, N_{2, x}]}|F(n)|+N\sup_{n\in(N_{1, x}, N_{2, x}]}\left|F(n+1)-F(n)\right|\Big),
\end{align*}
and the proof follows from Lemma \ref{vdc:lem1} and \ref{vdc:lem2} respectively.
\end{proof}
We will show some application of Corollary \ref{vdccor}.  For this purpose let us define $\Phi(x)=\{x\}-1/2$ and expand $\Phi$ in the Fourier series (see \cite{HB} Section 2), i.e. we obtain
\begin{align}\label{four1}
  \Phi(t)=\sum_{0<|m|\le M}\frac{1}{2\pi i m}e^{-2\pi imt}+O\left(\min\left\{1, \frac{1}{M\|t\|}\right\}\right),
\end{align}
for $M>0$, where $\|t\|=\min_{n\in\Z}|t-n|$ is the distance of $t\in\R$ to the nearest integer. Parameter $M$ will give us some margin of flexibility in our further calculations and will allow us to produce the estimates with  the decay acceptable for us. Moreover,
\begin{align}\label{four2}
  \min\left\{1, \frac{1}{M\|t\|}\right\}=\sum_{m\in\Z}b_m e^{2\pi imt},
\end{align}
where
\begin{align}\label{fcoe2}
  |b_m|\lesssim \min\left\{\frac{\log M}{M}, \frac{1}{|m|}, \frac{M}{|m|^2}\right\}.
\end{align}

\begin{lem}\label{lem:3}
Assume that $N\ge1$, $p, q\in\{0, 1\}$, $x\in\Z$ and take $M\ge1$. Then
\begin{align}\label{lem:3e1}
  \sum_{n\in\N}\min\left\{1, \frac{1}{M\|\vp(n+px+q)\|}\right\}\eta\left(\frac {n}{N}\right)\eta\left(\frac{n+x}{N}\right)\lesssim \frac{N\log M}{M}+\frac{NM^{1/2}\log M}{(\s(N)\vp(N))^{1/2}},
\end{align}
where $\eta$ is a cut--off function as in Theorem \ref{thm:1max}.
\end{lem}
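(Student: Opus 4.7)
The plan is to insert the Fourier expansion \eqref{four2} into the left-hand side of \eqref{lem:3e1} and analyze the resulting double sum. Writing
\[
\min\left\{1,\frac{1}{M\|\vp(n+px+q)\|}\right\}=\sum_{m\in\Z}b_m e^{2\pi im\vp(n+px+q)},
\]
with the Fourier coefficient bound $|b_m|\lesssim\min\{\log M/M,\,1/|m|,\,M/|m|^2\}$ from \eqref{fcoe2}, and interchanging summations, the task reduces to estimating
\[
\sum_{m\in\Z}b_m\sum_{n\in\N}e^{2\pi im\vp(n+px+q)}\eta\!\left(\frac{n}{N}\right)\eta\!\left(\frac{n+x}{N}\right).
\]

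First I would peel off the $m=0$ term. Since $b_0\lesssim\log M/M$ and $\eta(n/N)\eta((n+x)/N)$ is supported on $(N_{1,x},N_{2,x}]$ and bounded by $1$, the zeroth term contributes $\lesssim N\log M/M$, matching the first piece of the claimed bound. For $m\neq 0$, the key observation is that the cut-off product $F^{x}(n):=\eta(n/N)\eta((n+x)/N)$ is an arithmetic function, independent of $m$, whose sup is $\lesssim 1$ and whose forward difference satisfies $|F^{x}(n+1)-F^{x}(n)|\lesssim 1/N$ by smoothness of $\eta$ and the chain rule, so the correction factor $\sup|F^x|+N\sup|F^x(n+1)-F^x(n)|$ in \eqref{vdccor1} is $O(1)$. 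Applying Corollary \ref{vdccor} with $\alpha l=0$, $m_1=m$, $F_{m_1}^{x}=F^{x}$, one obtains uniformly in $m\neq 0$
\[
\bigg|\sum_{n\in\N}e^{2\pi im\vp(n+px+q)}\eta\!\left(\tfrac{n}{N}\right)\eta\!\left(\tfrac{n+x}{N}\right)\bigg|\lesssim |m|^{1/2}N\bigl(\s(N)\vp(N)\bigr)^{-1/2}.
\]

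It remains to sum the resulting series in $m$, which I would split at $|m|=M$. For $1\le|m|\le M$ I would use $|b_m|\lesssim\log M/M$, yielding
\[
\frac{\log M}{M}\sum_{1\le|m|\le M}|m|^{1/2}\lesssim M^{1/2}\log M,
\]
while for $|m|>M$ I would use $|b_m|\lesssim M/|m|^2$, yielding $M\sum_{|m|>M}|m|^{-3/2}\lesssim M^{1/2}$. Combining these two contributions and multiplying by $N(\s(N)\vp(N))^{-1/2}$ produces the second term of \eqref{lem:3e1}, and adding the $m=0$ contribution completes the proof.

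The main obstacle (though very mild) is the verification that the cutoff factor $\eta(n/N)\eta((n+x)/N)$ fits the framework of Corollary \ref{vdccor} as a single arithmetic function $F^{x}(n)$ whose telescoping in \eqref{sbp1} costs only a bounded multiplicative constant, which is what lets us pass the Van der Corput bound of Lemma \ref{vdc:lem1} through the partial summation without any $m$-dependent loss beyond the $|m|^{1/2}$ already present; everything else is bookkeeping with the three-way envelope on $b_m$.
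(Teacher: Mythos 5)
Your proposal is correct and follows essentially the same route as the paper: expand via \eqref{four2}, treat the $m=0$ term trivially using $|b_0|\lesssim\log M/M$, apply Corollary \ref{vdccor} (estimate \eqref{vdccor1}) with $F_m^x(n)=\eta(n/N)\eta((n+x)/N)$ so the partial-summation factor is $O(1)$, and then split the $m$-sum at $M$ using the three-way bound \eqref{fcoe2}. No gaps.
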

\begin{proof}
Let $S$ denote the sum in \eqref{lem:3e1}. Now we see, according to \eqref{four2}, that
\begin{align*}
S&=\sum_{n\in\N}\sum_{m\in\Z}b_m e^{2\pi im\vp(n+px+q)}\eta\left(\frac {n}{N}\right)\eta\left(\frac{n+x}{N}\right)\\
 &\lesssim\sum_{m\in\Z}|b_m|\bigg|\sum_{n\in\N} e^{2\pi im\vp(n+px+q)}\eta\left(\frac {n}{N}\right)\eta\left(\frac{n+x}{N}\right)\bigg|.
\end{align*}
Using \eqref{vdccor1} with $F_m^x(n)=\eta\left(\frac {n}{N}\right)\eta\left(\frac{n+x}{N}\right)$ and bounds
\eqref{fcoe2} for $|b_m|$ we immediately obtain
\begin{align*}
  \sum_{m\ge0}&|b_m|\bigg|\sum_{n\in\N} e^{2\pi im\vp(n+px+q)}\eta\left(\frac {n}{N}\right)\eta\left(\frac{n+x}{N}\right)\bigg|\\
  &\lesssim \frac{N\ \log M}{M} +
  \bigg(\sum_{0<m\le M}+\sum_{m> M}\bigg)|b_m|\frac{m^{1/2}N}{\big(\s(N)\vp(N)\big)^{1/2}}\\
  &\lesssim\frac{N\ \log M}{M}+\sum_{0<m\le M}m^{1/2}\frac{\log M}{M}\frac{N}{\big(\s(N)\vp(N)\big)^{1/2}}
  +\sum_{m> M}\frac{M}{m^{3/2}}\frac{N}{\big(\s(N)\vp(N)\big)^{1/2}}\\
  &\lesssim\frac{N\ \log M}{M}+\log M M^{1/2}\frac{N}{\big(\s(N)\vp(N)\big)^{1/2}},
\end{align*}
as desired.
\end{proof}
Now we have another application of Corollary \ref{vdccor} and Lemma \ref{lem:3}.
\begin{lem}\label{lemform1}
Assume that $h\in\mathcal{F}_c$, $\vp$ be its inverse and $\g=1/c$. If $0<\g\le 1$ and $\chi>0$ satisfy $4(1-\g)+6\chi<1$, then there exists $\e>0$ such that for every $N\in\N$ and for every $\a\in[0, 1]$
\begin{align}\label{form1}
   \sum_{n\in\mathbf{N}_{h}\cap[1, N]}\vp'(n)^{-1}e^{2\pi i \a n}=\sum_{n\in[1, N]}e^{2\pi i \a n}+O(N^{1-\chi-\e}).
\end{align}
The implied constant is independent of $\a$ and $N$.
\end{lem}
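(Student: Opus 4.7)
The plan is to use Lemma \ref{intlem} to replace the sum over $\mathbf{N}_h$ by a sum over all integers in $[1,N]$, up to a manageable fractional-part correction. For all $n$ sufficiently large, $\mathbf{1}_{\mathbf{N}_h}(n) = \lfloor -\vp(n)\rfloor - \lfloor -\vp(n+1)\rfloor$ (the finitely many exceptional small $n$ contribute $O(1)$), and since $\lfloor -x\rfloor = -x - \Phi(-x) - 1/2$ one obtains
\[
\mathbf{1}_{\mathbf{N}_h}(n) = \big(\vp(n+1) - \vp(n)\big) + \big(\Phi(-\vp(n+1)) - \Phi(-\vp(n))\big).
\]
The mean value theorem together with \eqref{fiequat} (which gives $|\vp''(n)|/\vp'(n) \simeq 1/n$) yields $\vp'(n)^{-1}(\vp(n+1) - \vp(n)) = 1 + O(1/n)$, so the first bracket contributes $\sum_{n=1}^N e^{2\pi i\a n} + O(\log N)$ to the left-hand side of \eqref{form1}. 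It therefore remains to bound
\[
R := \sum_{n=1}^N \vp'(n)^{-1} \big(\Phi(-\vp(n+1)) - \Phi(-\vp(n))\big) e^{2\pi i\a n}
\]
by $O(N^{1-\chi-\e})$.

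For $R$ I apply the Fourier expansion \eqref{four1}--\eqref{fcoe2} of $\Phi$ with a truncation parameter $M$ to be optimized, splitting $R = \sum_{0 < |m| \le M}\frac{R_m}{2\pi i m} + \tilde E$, where $R_m = \sum_{n=1}^N \vp'(n)^{-1}(e^{2\pi im\vp(n+1)} - e^{2\pi im\vp(n)})e^{2\pi i\a n}$ and $\tilde E$ absorbs the $\min\{1, 1/(M\|\cdot\|)\}$-tail. Shifting the index $n\mapsto n+1$ in the first exponential of $R_m$ and approximating $\vp'(n-1)^{-1}$ by $\vp'(n)^{-1}$ (the error is $O(1/\vp(n))$ by \eqref{fiequat}, summing over $n$ to $O(N^{1-\g+\e'})$) reduces $R_m$ to $(e^{-2\pi i\a}-1)U_m$ plus a negligible remainder, where
\[
U_m := \sum_{n=1}^N \vp'(n)^{-1} e^{2\pi i(\a n + m\vp(n))}.
\]

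To bound $U_m$ I dyadically decompose $[1,N]$ into blocks $\sim 2^k$ and apply estimate \eqref{vdccor1} of Corollary \ref{vdccor} on each with the weight $F(n) = \vp'(n)^{-1}$; using \eqref{fiequat}, both $\sup|F|$ and $2^k\sup|\Delta F|$ are $\simeq 2^k/\vp(2^k)$, whence each block contributes $\lesssim |m|^{1/2}(2^k)^2\vp(2^k)^{-3/2}\s(2^k)^{-1/2}$. Since $c < 4/3$ forces $\g = 1/c > 3/4$ and thus $2 - 3\g/2 > 0$, the dyadic sum is dominated by its last block, and after absorbing the slowly varying $\ell_\vp$ (and $\s^{-1/2}$ when $c=1$, via \eqref{slowhfi} and Remark \ref{rem:1}) into $N^{\e'}$ one obtains $|U_m| \lesssim |m|^{1/2}N^{2-3\g/2+\e'}$. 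A parallel dyadic application of Lemma \ref{lem:3}, weighted by $\vp'(n)^{-1}$, gives
\[
|\tilde E| \lesssim \frac{N^{2-\g+\e'}\log M}{M} + N^{2-3\g/2+\e'} M^{1/2}\log M.
\]

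Summing the estimate for $R_m$ over $0 < |m| \le M$ against $1/|m|$ and combining with $\tilde E$ produces
\[
|R| \lesssim M^{1/2} N^{2-3\g/2+\e'}\log M + \frac{N^{2-\g+\e'}\log M}{M},
\]
and the balanced choice $M = N^{\g/3}$ yields $|R| \lesssim N^{2-4\g/3+2\e'}\log N$. Since the hypothesis $4(1-\g) + 6\chi < 1$ rewrites as $\chi < (4\g-3)/6 < 4\g/3 - 1$, the inequality $2 - 4\g/3 < 1 - \chi$ is strict, so choosing $\e,\e' > 0$ sufficiently small delivers the required bound $O(N^{1-\chi-\e})$ uniformly in $\a$. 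The main technical obstacle is the Van der Corput estimate for $U_m$ via Corollary \ref{vdccor}: its $|m|^{1/2}$-growth caps how large $M$ can be taken before the Fourier truncation error $\tilde E$ dominates, and the requirement that the dyadic sum converge sharply imposes the constraint $c < 4/3$; the slowly varying sub-factors, especially $\s$ in the borderline case $c = 1$, must be tracked carefully through Lemma \ref{funlemfi} and Remark \ref{rem:1}.
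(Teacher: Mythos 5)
Your argument is correct, and it runs along the paper's own lines: Lemma \ref{intlem} plus $\lfloor -y\rfloor=-y-\Phi(-y)-1/2$ to write $\mathbf{1}_{\mathbf{N}_h}(n)=(\vp(n+1)-\vp(n))+(\Phi(-\vp(n+1))-\Phi(-\vp(n)))$, the truncated Fourier expansion \eqref{four1}--\eqref{fcoe2} of $\Phi$, the Van der Corput input through Corollary \ref{vdccor}, and a Lemma \ref{lem:3}--type bound for the $\min\{1,1/(M\|\cdot\|)\}$ tail. The one genuine deviation is how the difference $e^{2\pi im\vp(n+1)}-e^{2\pi im\vp(n)}$ is handled: the paper keeps it inside the summation by parts as the weight $\vp'(n)^{-1}\big(e^{2\pi im(\vp(n+1)-\vp(n))}-1\big)$, whose size $O(m)$ (the smallness $|e^{i\theta}-1|\le|\theta|$ cancels the $\vp'(n)^{-1}$) leads to the term $M^{3/2}P\,(\s(P)\vp(P))^{-1/2}$ and the choice $M=P^{1+\chi+2\e}\vp(P)^{-1}$, while you shift the index to get $R_m=(e^{-2\pi i\a}-1)U_m+O(N^{1-\g+\e'})$ and estimate the single weighted sum $U_m$ with weight $\vp'(n)^{-1}\simeq N^{1-\g}$, giving $M^{1/2}N^{2-3\g/2+\e'}$ and the balanced choice $M=N^{\g/3}$. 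Your bookkeeping yields $O(N^{2-4\g/3+\e})$, which in fact closes under the weaker condition $4(1-\g)+3\chi<1$, so the stated hypothesis $4(1-\g)+6\chi<1$ is more than enough; conversely the paper's weighting keeps the $N$-power smaller at the cost of a higher power of $M$, and its choice of $M$ is tailored to land exactly on $P^{1-\chi-\e}$. Two cosmetic points: the positivity $2-3\g/2>0$ needed for your dyadic sum to be dominated by the top block holds simply because $\g\le 1<4/3$ (it has nothing to do with $\g>3/4$, which is what the hypothesis forces), and the shift error $O(N^{1-\g+\e'})$ picks up a harmless $\log M$ after summing over $|m|\le M$, which you should note is still $o(N^{1-\chi-\e})$ since $\chi<\g$ under the hypothesis.
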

\begin{proof}
According to Lemma \ref{filem} (we may assume that it holds for all $n\in\mathbf{N}_h$) and the definition of function $\Phi(x)=\{x\}-1/2$ we obtain
\begin{align*}
  \sum_{n\in\mathbf{N}_{h}\cap[1, N]}&\vp'(n)^{-1}e^{2\pi i \a n}=\sum_{n\in[1, N]} \vp'(n)^{-1}\big(\lfloor-\vp(n)\rfloor-\lfloor-\vp(n+1)\rfloor\big)e^{2\pi i \a n}\\
  &=\sum_{n\in[1, N]} \vp'(n)^{-1}\big(\vp(n+1)-\vp(n)\big)e^{2\pi i \a n}\\
  &+\sum_{n\in[1, N]} \vp'(n)^{-1}\big(\Phi(-\vp(n+1))-\Phi(-\vp(n))\big)e^{2\pi i \a n}\\
  &=\sum_{n\in[1, N]} e^{2\pi i \a n}
  +\sum_{n\in[1, N]} \vp'(n)^{-1}\big(\Phi(-\vp(n+1))-\Phi(-\vp(n))\big)e^{2\pi i \a n}+O(\log N).
\end{align*}
The proof will completed if we show that
\begin{align}\label{form11}
  \sup_{P\in[1, N]}\bigg|\sum_{P<n\le P'\le 2P} \vp'(n)^{-1}\big(\Phi(-\vp(n+1))-\Phi(-\vp(n))\big)e^{2\pi i \a n}\bigg|\lesssim N^{1-\chi-\e}.
\end{align}
Let $S(P')$ denote the sum in \eqref{form11}. It is easy to see that the  Fourier expansions \eqref{four1} of $\Phi(x)$ leads us to
that
\begin{align*}
S(P')&=\sum_{0<|m|\le M}\frac{1}{2\pi i m}\sum_{P<n\le P'\le 2P} \vp'(n)^{-1}\Big(e^{2\pi i(\a n+m\vp(n+1))}-e^{2\pi i(\a n+m\vp(n))}\Big)\\
&+O\bigg(\sum_{P<n\le P'\le 2P}\vp'(n)^{-1}\left(\min\left\{1, \frac{1}{M\|\vp(n)\|}\right\}+\min\left\{1, \frac{1}{M\|\vp(n+1)\|}\right\}\right)\bigg)\\
&=\sum_{0<|m|\le M}\frac{1}{2\pi i m}\sum_{P<n\le P'\le 2P} e^{2\pi i(\a n+m\vp(n))}\vp'(n)^{-1}\Big(e^{2\pi im(\vp(n+1)-\vp(n))}-1\Big)\\
&+O\bigg(\frac{P^2\log M}{\vp(P)M}+\log M M^{1/2}\frac{P^2}{\s(P)^{1/2}\vp(P)^{3/2}}\bigg),
\end{align*}
with some $M\ge1$ which will be chosen later. Applying Corollary \ref{vdccor} to the inner sum in the penultimate expression and taking $M=P^{1+\chi+2\e}\vp(P)^{-1}$ (where $0<\e<\chi/10$ and $\chi>0$ such that $4(1-\g)+6\chi<1$) we get
\begin{align*}
S(P')&=O\bigg(M^{3/2}\frac{P}{\s(P)^{1/2}\vp(P)^{1/2}}+\frac{P^2\log M}{\vp(P)M}+\log M M^{1/2}\frac{P^2}{\s(P)^{1/2}\vp(P)^{3/2}}\bigg)\\
&=O\bigg(\frac{P^{5/2+3\chi/2+3\e}}{\s(P)^{1/2}\vp(P)^{2}}+P^{1-\chi-2\e}\log P+\log P \frac{P^{5/2+\chi/2+\e}}{\s(P)^{1/2}\vp(P)^{2}}\bigg)\\
&=O\big(P^{1-\chi-\e}\big(P^{3/2+5\chi/2+5\e-2\g}+P^{-\e}\log P\big)\big)=O\big(P^{1-\chi-\e}\big),
\end{align*}
since $3+5\chi+10\e-4\g<4(1-\g)+6\chi-1<0$.
\end{proof}

A straightforward application of formula \eqref{form1} with $\a=0$ shows that $|\mathbf{N}_h\cap[1, N]|\sim\vp(N)$. Indeed, let $U(x)=\sum_{n\in\mathbf{N}_{h}\cap[1, x]}\vp'(n)^{-1}$. Then, applying Lemma \ref{sbp} and \eqref{form1} we obtain
\begin{align*}
  |\mathbf{N}_h\cap[1, N]|&=\sum_{n\in\mathbf{N}_{h}\cap[1, N]}1=\sum_{n\in\mathbf{N}_{h}\cap[1, N]}\vp'(n)^{-1}\vp'(n)=U(N)\vp'(N)-\int_{1}^NU(x)\vp''(x)dx\\
   &=N\vp'(N)+O(\vp(N)N^{-\chi-\e})-\int_{1}^Nx\vp''(x)dx+O\bigg(\int_{1}^Nx^{1-\chi-\e}|\vp''(x)|dx\bigg)\\
   &=\vp(N)+O(\vp(N)N^{-\chi'}),
\end{align*}
for some $\chi'>0$, thus $|\mathbf{N}_h\cap[1, N]|\sim\vp(N)$.

\section{Proof of Theorem \ref{ergthm}}\label{secterg}
The main aim of this section is to prove Theorem \ref{ergthm}. For this purpose we will proceed as follows.
First of all we show the pointwise convergence on $L^2(X,\mu)$ using Lemma \ref{lemform1}, then by Theorem \ref{thm:1max}, interpolation and standard density argument, we extend this result for all $f\in L^p(X,\mu)$, where $p\ge1$.  We start from very simple observation based on summation by parts. Namely, if
\begin{align}\label{ergcon1}
  A_{h, N}^1f(x)=\frac{1}{N}\sum_{n\in\mathbf{N}_{h}\cap[1, N]} \vp'(n)^{-1}\ f(T^nx) \ _{\overrightarrow{N\to\8}}\ f^*(x) \ \ \mbox{for $\mu$ -- a.e. $x\in X$,}
\end{align}
then
\begin{align}\label{ergcon2}
 A_{h, N}f(x)=\frac{1}{|\mathbf{N}_{h}\cap[1, N]|}\sum_{n\in\mathbf{N}_{h}\cap[1, N]}f(T^nx) \ _{\overrightarrow{N\to\8}}\ f^*(x) \ \ \mbox{for $\mu$ -- a.e. $x\in X$.}
\end{align}
Let $M_kf(x)=\sum_{n\in\mathbf{N}_{h}\cap[1, k]}f(T^nx)$ and $M_k^1f(x)=\sum_{n\in\mathbf{N}_{h}\cap[1, k]} \vp'(n)^{-1}\ f(T^nx)$ and $M_0f(x)=M_0^1f(x)=0$. Let $m_k=\sum_{n\in\mathbf{N}_{h}\cap[1, k]}1=|\mathbf{N}_{h}\cap[1, k]|\sim\vp(k)$ and
$m_k^1=\sum_{n\in\mathbf{N}_{h}\cap[1, k]}\vp'(n)^{-1}$. Then, for $f\ge0$, we have
\begin{multline*}
  A_{h, N}f(x)=
  \frac{1}{|\mathbf{N}_{h}\cap[1, N]|}\sum_{n\in\mathbf{N}_{h}\cap[1, N]}\vp'(n)\vp'(n)^{-1}f(T^nx)\\
  =\frac{1}{|\mathbf{N}_{h}\cap[1, N]|}\sum_{k=1}^N\vp'(k)\big(M_k^1f(x)-M_{k-1}^1f(x)\big)\\
  =\frac{\vp'(N)}{|\mathbf{N}_{h}\cap[1, N]|}M_N^1f(x)+
  \frac{1}{|\mathbf{N}_{h}\cap[1, N]|}\sum_{k=1}^{N-1}\left(\vp'(k)-\vp'(k+1)\right)M_k^1f(x)\\
  =\frac{m_N^1\vp'(N)}{|\mathbf{N}_{h}\cap[1, N]|}\frac{N}{m_N^1}A_{h, N}^1f(x)
  +
  \frac{1}{|\mathbf{N}_{h}\cap[1, N]|}\sum_{k=1}^{N-1}\left(m_k^1\vp'(k)-m_k^1\vp'(k+1)\right)\frac{k}{m_k^1}A_{h, k}^1f(x).
\end{multline*}
On the other hand
\begin{align*}
  \frac{m_N^1\vp'(N)}{|\mathbf{N}_{h}\cap[1, N]|}f^*(x)+
  \frac{1}{|\mathbf{N}_{h}\cap[1, N]|}\sum_{k=1}^{N-1}\left(m_k^1\vp'(k)-m_k^1\vp'(k+1)\right)f^*(x)\\
  =\frac{1}{|\mathbf{N}_{h}\cap[1, N]|}\sum_{k=1}^{N}\vp'(k)\big(m_k^1-m_{k-1}^1\big)f^*(x)=f^*(x).
\end{align*}
Let $\e>0$ such that for every $N> N_0$ we have
$$\left|\frac{N}{m_N^1}A_{h, N}^1f(x)-f^*(x)\right|<\e.$$
Since, $|\mathbf{N}_{h}\cap[1, N]|\ _{\overrightarrow{N\to\8}}\ \8$, we see
\begin{multline*}
  \limsup_{N\to\8}\bigg|\frac{1}{|\mathbf{N}_{h}\cap[1, N]|}\sum_{n\in\mathbf{N}_{h}\cap[1, N]}f(T^nx)-f^*(x)\bigg|
  \le\limsup_{N\to\8}\frac{m_N^1\vp'(N)}{|\mathbf{N}_{h}\cap[1, N]|}\left|\frac{N}{m_N^1}A_{h, k}^1f(x)-f^*(x)\right|\\
  +
  \limsup_{N\to\8}\frac{1}{|\mathbf{N}_{h}\cap[1, N]|}\left(\sum_{k=1}^{N_0}+\sum_{k=N_0+1}^{N-1}\right)\left(m_k^1\vp'(k)-m_k^1\vp'(k+1)\right)\left|\frac{k}{m_k^1}A_{h, k}^1f(x)-f^*(x)\right|
  \le \e,
\end{multline*}
and \eqref{ergcon2} is justified.

In order to prove \eqref{ergcon1} on $L^2(X, \mu)$ it suffices to show that
\begin{align}\label{ergmax}
  \Big\|\sup_{N\in\mathcal{D}}|A_{h, N}^1f|\Big\|_{L^2(X, \mu)}\lesssim\|f\|_{L^2(X, \mu)},
\end{align}
where $\mathcal{D}=\{2^n: n\in\N\}$ and
\begin{align}\label{ergosc}
  \sum_{j=1}^J\Big\|\sup_{\genfrac{}{}{0pt}{}{N_j<N\le N_{j+1}}{N\in Z_{\eps}}}|A_{h, N}^1f-A_{h, N_j}^1f|\Big\|_{L^2(X, \mu)}\le o(J)\|f\|_{L^2(X, \mu)},
\end{align}
where $Z_{\e}=\{\lfloor(1+\eps)^n\rfloor: n\in\N\}$ for some fixed $\eps>0$ and $(N_j)_{j\in\N}$ is any rapidly
increasing sequence $2N_j<N_{j+1}$. Using transference principle as in \cite{B3} we see that \eqref{ergmax}
and \eqref{ergosc} can be transferred to $\Z$ and \eqref{ergmax} follows from Theorem \ref{thm:1max} by interpolation. If it comes to \eqref{ergosc} we use Lemma \ref{lemform1}. Indeed, let
\begin{align*}
  K^1_{h, N}(x)=\frac{1}{N}\sum_{n\in\mathbf{N}_{h}\cap[1, N]} \vp'(n)^{-1}\d_n(x), \ \ \mbox{and}\ \ \
  K^2_{h, N}(x)=\frac{1}{N}\sum_{n\in[1, N]}\d_n(x),
\end{align*}
for $x\in\Z$ and observe
\begin{align*}
  \sum_{j=1}^J\Big\|\sup_{\genfrac{}{}{0pt}{}{N_j<N\le N_{j+1}}{N\in Z_{\eps}}}|K^1_{h, N}*f-K^1_{h, N_j}*f|\Big\|_{\ell^2(\Z)}
  &\lesssim \sum_{j=1}^J\Big\|\sup_{\genfrac{}{}{0pt}{}{N_j<N\le N_{j+1}}{N\in Z_{\eps}}}|K^2_{h, N}*f-K^2_{h, N_j}*f|\Big\|_{\ell^2(\Z)}\\
  &+\sum_{j=1}^J\bigg(\sum_{\genfrac{}{}{0pt}{}{N_j<N\le N_{j+1}}{N\in Z_{\eps}}}\big\|K^1_{h, N}*f-K^2_{h, N}*f\big\|^2_{\ell^2(\Z)}\bigg)^{1/2}\\
  &\lesssim o(J)\|f\|_{\ell^2(\Z)}+\sum_{j=1}^J N_j^{-\chi}\|f\|_{\ell^2(\Z)}\lesssim o(J)\|f\|_{\ell^2(\Z)},
\end{align*}
as desired. Since the first inequality was proved in  \cite{B3}, and the second one follows from Parseval's identity and Lemma \ref{lemform1}.

\section{Necessary approximations}\label{section4}
This section is devoted to the study of properties of the kernel $K_{h, N}(x)$ as defined in \eqref{thm:1max2} or more precisely $K_{h, N}*\widetilde{K}_{h, N}(x)$, where $\widetilde{K}_{h, N}(x)=K_{h, N}(-x)$. We shall show that $K_{h, N}*\widetilde{K}_{h, N}(x)$ can be split into a delta mass at $0$, a slowly varying function $G_{N}(x)$, and a small error term $E_{N}(x)$. From now on we will assume that $29/30<\g=1/c<1$. The case when $c=1$ is unavailable at this moment due to the lack of decay of order $1/N$ in Lemma \ref{lem:1ker0e1}. The best  we could do is $1/N^{1-\e}$ for arbitrary $\e>0$. If Lemma \ref{lem:1ker0e1} was true in the case $c=1$,  it must have been based on  completely new ideas. Since $|\mathbf{N}_h\cap[1, N]|\sim\vp(N)$ we will replace $|\mathbf{N}_h\cap[1, N]|$ by $\vp(N)$ in the definition of $K_{h, N}(x)$. We begin with the following observation
\begin{multline*}
  K_{h, N}*\widetilde{K}_{h, N}(x)=\frac{1}{\vp(N)^2}\sum_{m\in\mathbf{N}_h}\sum_{n\in\mathbf{N}_h}
  \d_{m}*\d_{-n}(x)\eta\left(\frac m N\right)\eta\left(\frac n N\right)\\
  =\frac{1}{\vp(N)^2}\sum_{m\in\mathbf{N}_h}\sum_{n\in\mathbf{N}_h}\d_{m}(x+n)\eta\left(\frac m N\right)\eta\left(\frac n N\right)
  =\frac{1}{\vp(N)^2}\sum_{n\in\mathbf{N}_h}\mathbf{1}_{\mathbf{N}_h}(x+n)
  \eta\left(\frac{n+x}{N}\right)\eta\left(\frac n N\right).
\end{multline*}
This also proves that $K_{h, N}*\widetilde{K}_{h, N}(x)=K_{h, N}*\widetilde{K}_{h, N}(-x)$ for all $x\in\Z$.
\begin{lem}\label{lem:1ker0}
Assume that $0<|x|\le\vp(N)$, then
 \begin{align}\label{lem:1ker0e1}
  |K_{h, N}*\widetilde{K}_{h, N}(x)|\lesssim \frac{1}{N}.
\end{align}
\end{lem}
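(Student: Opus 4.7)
Starting from the formula
\[
K_{h,N}\ast\widetilde{K}_{h,N}(x)\;=\;\frac{1}{\vp(N)^2}\sum_{n\in\mathbf{N}_h}\mathbf{1}_{\mathbf{N}_h}(x+n)\,\eta\!\left(\tfrac{n+x}{N}\right)\eta\!\left(\tfrac{n}{N}\right)
\]
derived just before the statement, the claim reduces to
\[
S(x)\;:=\;\sum_{n}\mathbf{1}_{\mathbf{N}_h}(n)\,\mathbf{1}_{\mathbf{N}_h}(n+x)\,\eta\!\left(\tfrac{n}{N}\right)\eta\!\left(\tfrac{n+x}{N}\right)\;\lesssim\;\frac{\vp(N)^2}{N}
\]
uniformly in $0<|x|\le\vp(N)$. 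By Lemma~\ref{intlem} combined with $\lfloor y\rfloor=y-\Phi(y)-\tfrac12$, for all sufficiently large $n$ we have the decomposition
\[
\mathbf{1}_{\mathbf{N}_h}(n)=a_n+b_n,\qquad a_n:=\vp(n+1)-\vp(n),\qquad b_n:=\Phi(-\vp(n+1))-\Phi(-\vp(n)),
\]
where $a_n\simeq\vp'(n)\simeq\vp(N)/N$ is a slowly varying "density" piece and $b_n$ is a bounded oscillatory fluctuation. Expanding the product gives $S(x)=S_{aa}+S_{ab}+S_{ba}+S_{bb}$ and it suffices to bound each of the four pieces by $\vp(N)^2/N$.

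The diagonal sum $S_{aa}=\sum_n a_na_{n+x}\,\eta\eta$ is immediate since it consists of $O(N)$ summands each of size $O((\vp(N)/N)^2)$. For the cross sums $S_{ab}$ and $S_{ba}$, I would apply the truncated Fourier expansion \eqref{four1}--\eqref{fcoe2} of $\Phi$ at a level $M\ge 1$ to the $b$-factor. Each resulting exponential sum $\sum_n a_n\,\eta\eta\,e^{\pm 2\pi i m\vp(n+x+q)}$ is of the form treated by Corollary~\ref{vdccor} with weight $F(n)=a_n\,\eta(n/N)\eta((n+x)/N)$: the estimates $\sup_n|F|+N\sup_n|F(n{+}1){-}F(n)|\lesssim\vp(N)/N$ are straightforward, and \eqref{vdccor1} then yields $|m|^{1/2}\vp(N)^{1/2}$ for each such inner sum. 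Summing against the $1/m$ Fourier coefficients and bounding the truncation error by Lemma~\ref{lem:3} produces an overall estimate that is balanced by a judicious choice of $M$; the required bound $\lesssim\vp(N)^2/N$ then holds because $\gamma=1/c>3/4$.

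The main obstacle is the doubly oscillatory sum $S_{bb}=\sum_n b_nb_{n+x}\,\eta\eta$. Here I would Fourier-expand both factors at a common level $M$; the main contribution is a double sum over $0<|m_1|,|m_2|\le M$ whose inner sums, after expanding the pair of first differences in $\vp$, are four exponential sums of the form $\sum_n e^{2\pi i(m_1\vp(n+p)+m_2\vp(n+x+q))}\,\eta\eta$. Each of these is controlled by the double-phase bound \eqref{vdccor2} of Corollary~\ref{vdccor} with $\kappa=0$ (which is permissible because $|x|\ge 1$), giving $\lesssim\max\{|m_1|,|m_2|\}^{2/3}N^{4/3}\vp(N)^{-1/3}$. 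To make the resulting double sum in $m_1,m_2$ converge at the rate $\vp(N)^2/N$, I would first extract the additional cancellation $|e^{2\pi i m_i(\vp(\cdot+1)-\vp(\cdot))}-1|\lesssim\min\{1,|m_i|\vp(N)/N\}$ in the low-frequency regime $|m_i|\lesssim N/\vp(N)$ before invoking Corollary~\ref{vdccor}, and handle the Fourier truncation error by Lemma~\ref{lem:3}. It is precisely in balancing these exponents that the stringent restriction $c<30/29$ is used; the optimal choice of $M$ then yields $S_{bb}\lesssim\vp(N)^2/N$, which completes the proof.
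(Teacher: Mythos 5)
Your reduction to bounding $S(x)=\sum_n \mathbf{1}_{\mathbf{N}_h}(n)\mathbf{1}_{\mathbf{N}_h}(n+x)\eta(\tfrac nN)\eta(\tfrac{n+x}N)$ by $\vp(N)^2/N$ is fine, and the terms $S_{aa}, S_{ab}, S_{ba}$ can indeed be handled as you sketch, but the treatment of $S_{bb}$ has a genuine gap, and it is precisely the point where this regime $0<|x|\le\vp(N)$ differs from Lemma \ref{lem:1ker}. Your plan invokes \eqref{vdccor2} with $\kappa=0$ (justified only by $|x|\ge1$), which gives the bound $m^{2/3}N^{4/3}\s(N)^{-1/3}\vp(N)^{-1/3}$ for the inner exponential sum; since $\vp(N)\le N$ this is never better than the trivial bound $N$, so the estimate is vacuous. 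Quantitatively, even after extracting the factors $\min\{1,|m_i|\vp(N)/N\}$ and summing against $1/(m_1m_2)$, the main term requires roughly $M\lesssim(\vp(N)/N)^{7/2}<1$, while controlling the Fourier truncation error via Lemma \ref{lem:3} at the level $\vp(N)^2/N$ forces $M\gtrsim N^2/\vp(N)^2\ge1$; no choice of $M$ reconciles these. This is not a defect of the particular Van der Corput lemma that a cleverer balancing could fix: when $x$ is small (say $x=1$) and $m_1\approx-m_2$, the phase $m_1\vp(n)+m_2\vp(n+x)\approx -m_1 x\vp'(n)$ is extremely flat, so there is essentially no oscillation to exploit in the near-antidiagonal block of the $(m_1,m_2)$-sum. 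That is exactly why the paper restricts the Fourier-expansion argument to $|x|>\vp(N)$ (where $\kappa=1$ is available) and proves the present lemma by a completely different route.

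The paper's own proof is elementary counting: writing $n=\lfloor h(k)\rfloor$ and $n+x=\lfloor h(k+s)\rfloor$, the constraints $n,n+x\simeq N$ force $k\simeq\vp(N)$ and, since $h(k+s)-h(k)\simeq s h'(\vp(N))\simeq sN/\vp(N)$ must lie in $[x-1,x+1]$, also $s\simeq x\vp(N)/N$; for each fixed such $s$ the increasing function $g(k)=h(k+s)-h(k)$ has increments $g(k+1)-g(k)\simeq sh''\simeq x/\vp(N)\lesssim1$, so at most $1+\vp(N)/x$ values of $k$ satisfy $x-1\le g(k)\le x+1$. Multiplying the two counts gives at most $\simeq\frac{x\vp(N)}{N}\cdot\frac{\vp(N)}{x}=\vp(N)^2/N$ admissible pairs, whence the bound $1/N$ after dividing by $\vp(N)^2$, uniformly in $0<|x|\le\vp(N)$. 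If you want to salvage your scheme you would need a substitute for \eqref{vdccor2} that wins for $1\le x\le\vp(N)$ (in particular handles the $m_1\approx-m_2$ block), and no such tool is available in the paper; the counting argument avoids the issue entirely.
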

\begin{proof}
Here we will use the argument from \cite{UZ} to show \eqref{lem:1ker0e1}. We may assume that
$0<x\le\vp(N)$, since $K_{h, N}*\widetilde{K}_{h, N}(x)$ is symmetric. Observe that
$K_{h, N}*\widetilde{K}_{h, N}(x)$ is nonzero if and only if $n, n+x\in\mathbf{N}_h$ and $n, n+x\simeq N$.
Thus we have to count the number of such $n$'s uniformly with respect to  $1\le x\le\vp(N)$. Observe that
\begin{multline}\label{ineq1}
  K_{h, N}*\widetilde{K}_{h, N}(x)
  =\frac{1}{\vp(N)^2}\sum_{(s, k)\in\N\times\N}\mathbf{1}_{\lfloor h(k+s)\rfloor}(x+\lfloor h(k)\rfloor)
  \eta\left(\frac{\lfloor h(k+s)\rfloor}{N}\right)\eta\left(\frac {\lfloor h(k)\rfloor}{N}\right)\\
  \le\frac{1}{\vp(N)^2}\sum_{(s, k)\in A_{N}}
  \eta\left(\frac{\lfloor h(k+s)\rfloor}{N}\right)\eta\left(\frac {\lfloor h(k)\rfloor}{N}\right),
\end{multline}
where $A_N=\{(s, k)\in\N\times\N: \vp(N/2)\le k\le \vp(5N),\ s\simeq \frac{x\vp(N)}{N},\ x-1\le h(k+s)-h(k)\le x+1\}$. The last inequality can be achieved as follows. Recall that
$n\in\mathbf{N}_h$ if and only if $n=\lfloor h(k)\rfloor$ for some $k\in\N$, but if $N/2\le n=\lfloor h(k)\rfloor\le 4N$, then $\vp(N/2)\le k\le \vp(5N)$. Moreover, if $\lfloor h(k)\rfloor+x=\lfloor h(k+s)\rfloor$ holds for some $(s, k)\in\N\times\N$, then $x-1\le h(k+s)-h(k)\le x+1$ is satisfied for the same pairs. Finally,
define $g(k)=h(k+s)-h(k)$ and observe
\begin{align*}
  g(k)=h(k+s)-h(k)=\int_k^{k+s}h'(t)dt\simeq sh'(\vp(N))\ \Longrightarrow\ x\simeq sh'(\vp(N))\simeq\frac{sN}{\vp(N)},
\end{align*}
this implies that $s\simeq \frac{x\vp(N)}{N}$ and justifies \eqref{ineq1}.
The task now is to estimate the cardinality of $A_N$. For this purpose it suffices to find the distance
between $g(k+1)$ and $g(k)$, since $g(k)$ is increasing. We see that there are $\xi_1, \xi_2\in(0, 1)$ such that
\begin{align*}
  g(k+1)-g(k)&=h'(k+s+\xi_1)-h'(k+\xi_1)
  =sh''(k+\xi_1+\xi_2s)
  \simeq
 \frac{sN}{\vp(N)^2}\simeq\frac{x}{\vp(N)}\lesssim1,
\end{align*}
since also $0<s\lesssim\vp(N)$. Combining these observations we see that for a fixed $s$ such that $s\simeq \frac{x\vp(N)}{N}$
 we have at most $1+\left(\frac{sN}{\vp(N)^2}\right)^{-1}\simeq\frac{\vp(N)}{x}$ values of $k\simeq\vp(N)$
for which the inequality $x-1\le h(k+s)-h(k)\le x+1$ holds. Therefore,
\begin{align*}
  K_{h, N}*\widetilde{K}_{h, N}(x)\lesssim \frac{1}{\vp(N)^2}\sum_{(s, k)\in A_{N}}
  \eta\left(\frac{\lfloor h(k+s)\rfloor}{N}\right)\eta\left(\frac {\lfloor h(k)\rfloor}{N}\right)
  \lesssim \frac{1}{\vp(N)^2}\cdot\frac{x\vp(N)}{N}\cdot\frac{\vp(N)}{x}=\frac{1}{N}.
\end{align*}
This completes the proof of \eqref{lem:1ker0e1}.
\end{proof}

\begin{lem}\label{lem:1ker}
There exists $\chi>0$ such that $K_{h, N}*\widetilde{K}_{h, N}(x)=G_N(x)+E_N(x)$ for all $|x|>\vp(N)$, where $|E_N(x)|\lesssim N^{-1-\chi}$ and
\begin{align}\label{lem:1ker1}
  G_N(x)=\frac{1}{\vp(N)^2}\sum_{n\in\N}\vp'(n)\vp'(n+|x|)\eta\left(\frac n N\right)\eta\left(\frac{n+|x|}{N}\right).
\end{align}
Moreover, $|G_N(x)|\lesssim N^{-1}$ and $|G_N(x+h)-G_N(x)|\lesssim N^{-2}|h|$ for all $x, h\in\Z$.
\end{lem}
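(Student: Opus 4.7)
The plan is to start from the expression
$$
K_{h,N}*\widetilde{K}_{h,N}(x)=\frac{1}{\vp(N)^{2}}\sum_{n\in\N}\mathbf{1}_{\mathbf{N}_{h}}(n)\,\mathbf{1}_{\mathbf{N}_{h}}(n+|x|)\,\eta(n/N)\,\eta((n+|x|)/N),
$$
which follows from the symmetry $K_{h,N}*\widetilde{K}_{h,N}(x)=K_{h,N}*\widetilde{K}_{h,N}(-x)$. By Lemma \ref{intlem} one may substitute $\mathbf{1}_{\mathbf{N}_{h}}(n)=(\vp(n+1)-\vp(n))+(\Phi(-\vp(n+1))-\Phi(-\vp(n)))$ for large $n$, where $\Phi(t)=\{t\}-1/2$. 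Expanding the resulting product of the two indicator factors generates four pieces: one ``smooth'' piece with two $\vp$-differences, two cross terms each with a $\vp$-difference and a $\Phi$-difference, and one pure $\Phi\cdot\Phi$ piece.

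For the smooth piece, the mean value theorem together with Lemma \ref{funlemfi} (which yields $\vp''(t)\simeq\vp(N)/N^{2}$ on the support) gives $\vp(n+1)-\vp(n)=\vp'(n)+O(\vp(N)/N^{2})$, and similarly at $n+|x|$. The product $\vp'(n)\vp'(n+|x|)$ is exactly the summand defining $G_{N}(x)$, while the remaining quadratic errors sum (after the $\vp(N)^{-2}$ normalization, over the $O(N)$ active indices) to $O(N^{-2})$ and are absorbed into $E_{N}(x)$ (any $\chi\le 1$ suffices for this part).

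For the three $\Phi$-pieces, the plan is to expand each $\Phi$-difference via the truncated Fourier series \eqref{four1} at a cutoff $M\ge 1$. The cross terms then reduce to oscillatory sums of the form $\sum_{n}e^{-2\pi i m\vp(n+|x|+q)}F_{m}^{x}(n)$ with amplitude $F_{m}^{x}(n)=(\vp(n+1)-\vp(n))\eta(n/N)\eta((n+|x|)/N)$; these are handled by \eqref{vdccor1}, with $|F_{m}^{x}(n)|\lesssim\vp(N)/N$ and $|F_{m}^{x}(n+1)-F_{m}^{x}(n)|\lesssim\vp(N)/N^{2}$ supplied by Lemma \ref{funlemfi}. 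The pure $\Phi\cdot\Phi$ piece reduces to oscillatory sums of the form $\sum_{n}e^{-2\pi i(m_{1}\vp(n+p)+m_{2}\vp(n+|x|+q))}\eta(n/N)\eta((n+|x|)/N)$, to which \eqref{vdccor2} applies with $\kappa=1$ -- this is precisely where the hypothesis $|x|>\vp(N)$ is used. The Fourier-tail errors $\min(1,1/(M\|\cdot\|))$ in each factor are dominated by Lemma \ref{lem:3}. Summing the $1/m$-weights against these estimates and choosing $M$ as a suitable power of $N$ balances the Van der Corput bounds against the Fourier tails.

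The size and regularity of $G_{N}$ follow directly: since $\vp'(n)\simeq\vp(N)/N$ on the support and the number of nonzero summands is $O(N)$, one has $|G_{N}(x)|\lesssim \vp(N)^{-2}\cdot N\cdot(\vp(N)/N)^{2}=1/N$, and the Lipschitz bound follows from $|\vp'(n+|x+h|)-\vp'(n+|x|)|\lesssim|h|\vp(N)/N^{2}$ together with the analogous Lipschitz estimate for $\eta$. The main obstacle is the quantitative balancing in the pure $\Phi\cdot\Phi$ piece: the bound $M^{2/3}N^{4/3}\vp(N)^{-2/3}$ supplied by \eqref{vdccor2} (recall $\sigma\equiv 1$ for $c>1$), combined with the $\vp(N)^{-2}$ normalization and with the Fourier-tail contributions from Lemma \ref{lem:3}, leaves room for a positive surplus $\chi>0$ exactly when $\vp(N)\gtrsim N^{29/30}$, which is precisely the source of the restriction $c\in(1,30/29)$ stated at the beginning of the section.
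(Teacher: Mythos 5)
Your proposal is correct and follows essentially the same route as the paper's proof: expand the indicators via Lemma \ref{intlem} and $\Phi(t)=\{t\}-1/2$, Fourier-expand $\Phi$ at a cutoff $M$, identify the main term with $G_N$, bound the oscillatory pieces by Corollary \ref{vdccor} (the two-phase bound \eqref{vdccor2} with $\kappa=1$ being exactly where $|x|>\vp(N)$ enters) and the Fourier tails by Lemma \ref{lem:3}, and then choose $M$ as a power of $N$ using $29/30<\g<1$, with the same direct estimates for the size and Lipschitz regularity of $G_N$. The only cosmetic deviations are that you split the exponential differences into separate sums instead of keeping the small factor $e^{2\pi i m(\vp(\cdot+1)-\vp(\cdot))}-1$ in the amplitude (so you implicitly need the evident shifted-phase variant of \eqref{vdccor2} with $\vp(n+p)$, $\vp(n+x+q)$, $p,q\in\{0,1\}$), and that you confine the Taylor expansion to the smooth piece, which sidesteps the paper's separate error term $\Theta(n,x)$.
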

\begin{proof}
We may assume that $x>\vp(N)$, since $K_{h, N}*\widetilde{K}_{h, N}(x)$ is symmetric.
In order to prove \eqref{lem:1ker1} we apply Lemma \ref{intlem} and notice that for $l\in\N$
\begin{align}\label{lem:1ker2}
  \lfloor-\vp(l)\rfloor-\lfloor-\vp(l+1)\rfloor=\vp(l+1)-\vp(l)+\Phi(-\vp(l+1))-\Phi(-\vp(l)),
\end{align}
where $\Phi(x)=\{x\}-1/2$. Recalling \eqref{four1} let us introduce
\begin{align}\label{four3}
\begin{split}
  \Delta_M(n+x)&=\sum_{0<|m|\le M}\frac{1}{2\pi i m} \Big(e^{2\pi im\vp(n+x+1)}-e^{2\pi im\vp(n+x)}\Big),\\
  \Pi_M(n+x)&=\big(\Phi(-\vp(n+x+1))-\Phi(-\vp(n+x))\big)-\Delta_M(n+x). \ \ \mbox{Moreover,}
  \end{split}
  \end{align}
  \begin{align}
  \label{four31}|\Delta_{M}(n+x)|&\lesssim\log M,\\
  \label{four32}|\Pi_M(n+x)|&\lesssim\min\left\{1, \frac{1}{M\|\vp(n+x)\|}\right\}+\min\left\{1, \frac{1}{M\|\vp(n+x+1)\|}\right\}.
\end{align}
Observe that for every $l\in\N$ there is  $\xi_l\in(0, 1)$ such that $\vp(l+1)=\vp(l)+\vp'(l)+\vp''(l+\xi_l)/2.$
Combining all these things we have
\begin{align}\label{expans1}
  \mathbf{1}_{\mathbf{N}_h}&(n)\mathbf{1}_{\mathbf{N}_h}(n+x)=\big(\lfloor-\vp(n)\rfloor-\lfloor-\vp(n+1)\rfloor\big)
  \big(\lfloor-\vp(n+x)\rfloor-\lfloor-\vp(n+x+1)\rfloor\big)\\
 \nonumber &=\big(\vp(n+1)-\vp(n)\big)\big(\vp(n+x+1)-\vp(n+x)\big)\\
 \nonumber &+\big(\vp(n+1)-\vp(n)\big)\big(\Phi(-\vp(n+x+1))-\Phi(-\vp(n+x))\big)\\
 \nonumber &+\big(\vp(n+x+1)-\vp(n+x)\big)\big(\Phi(-\vp(n+1))-\Phi(-\vp(n))\big)\\
 \nonumber &+\big(\Phi(-\vp(n+1))-\Phi(-\vp(n))\big)\big(\Phi(-\vp(n+x+1))-\Phi(-\vp(n+x))\big)\\
 \nonumber &=\vp'(n)\vp'(n+x)+\vp'(n)\Delta_M(n+x)+\vp'(n+x)\Delta_M(n)+\big(\vp'(n)\Pi_M(n+x)\\
 \nonumber &+\vp'(n+x)\Pi_M(n)\big)+\Delta_M(n)\Delta_M(n+x)+\big(\Delta_M(n)\Pi_M(n+x)+\Pi_M(n)\Delta_M(n+x)\big)\\
 \nonumber &+\Pi_M(n)\Pi_M(n+x)+\Theta(n, x),
\end{align}
where
\begin{multline}\label{defth}
  \Theta(n, x)=\vp'(n)\vp''(n+x+\xi_{n+x})/2+\vp''(n+\xi_n)\vp'(n+x)/2\\
  +\vp''(n+\xi_n)\vp''(n+x+\xi_{n+x})/4+\vp''(n+\xi_n)\big(\Phi(-\vp(n+x+1))-\Phi(-\vp(n+x))\big)/2\\
 +\vp''(n+x+\xi_{n+x})\big(\Phi(-\vp(n+1))-\Phi(-\vp(n))\big)/2.
\end{multline}
Therefore, according to \eqref{expans1} we have
\begin{align*}
  K_{h, N}*\widetilde{K}_{h, N}(x)=\sum_{j=1}^{8}I_j(x),
\end{align*}
where
\begin{align*}
  I_1(x)=\frac{1}{\vp(N)^2}\sum_{n\in\N}\vp'(n)\vp'(n+x)\eta\left(\frac n N\right)\eta\left(\frac{n+x}{N}\right),
\end{align*}
\begin{align*}
  I_2(x)=\frac{1}{\vp(N)^2}\sum_{0<|m|\le M}\frac{1}{2\pi i m}\sum_{n\in\N}e^{2\pi im\vp(n+x)}\cdot\Psi_2(m, n, x),
\end{align*}
where $\Psi_2(m, n, x)=\vp'(n)\big(e^{2\pi im(\vp(n+x+1)-\vp(n+x))}-1\big)\eta\left(\frac n N\right)\eta\left(\frac{n+x}{N}\right)$.
\begin{align*}
  I_3(x)=\frac{1}{\vp(N)^2}\sum_{0<|m|\le M}\frac{1}{2\pi i m}\sum_{n\in\N}e^{2\pi im\vp(n)}\cdot\Psi_3(m, n, x),
\end{align*}
where $\Psi_3(m, n, x)=\vp'(n+x)\big(e^{2\pi im(\vp(n+1)-\vp(n))}-1\big)\eta\left(\frac n N\right)\eta\left(\frac{n+x}{N}\right)$.
\begin{align*}
  I_4(x)=\frac{1}{\vp(N)^2}\sum_{n\in\N}\big(\vp'(n)\Pi_M(n+x)+\vp'(n+x)\Pi_M(n)\big)\eta\left(\frac n N\right)\eta\left(\frac{n+x}{N}\right),
\end{align*}
\begin{align*}
  I_5(x)=\frac{1}{\vp(N)^2}\sum_{0<|m_1|\le M}&\sum_{0<|m_2|\le M}\frac{1}{(2\pi i)^2 m_1m_2}\sum_{n\in\N}e^{2\pi i(m_1\vp(n)+m_2\vp(n+x))}\cdot\Psi_5(m_1, m_2, n, x),
\end{align*}
where $\Psi_5(m_1, m_2, n, x)=\big(e^{2\pi im_1(\vp(n+1)-\vp(n))}-1\big)
   \big(e^{2\pi im_2(\vp(n+x+1)-\vp(n+x))}-1\big)\eta\left(\frac n N\right)\eta\left(\frac{n+x}{N}\right)$.
\begin{align*}
  I_6(x)=\frac{1}{\vp(N)^2}\sum_{n\in\N}\big(\Delta_M(n)\Pi_M(n+x)+\Pi_M(n)\Delta_M(n+x)\big)
  \eta\left(\frac n N\right)\eta\left(\frac{n+x}{N}\right),
\end{align*}
\begin{align*}
  I_7(x)=\frac{1}{\vp(N)^2}\sum_{n\in\N}\Pi_M(n)\Pi_M(n+x)\eta\left(\frac n N\right)\eta\left(\frac{n+x}{N}\right),
\end{align*}
\begin{align*}
  I_8(x)=\frac{1}{\vp(N)^2}\sum_{n\in\N}\Theta(n, x)\eta\left(\frac n N\right)\eta\left(\frac{n+x}{N}\right).
\end{align*}
Recall that $29/30<\g<1$, and let $M=N^{1+2\chi+\e}\vp(N)^{-1}$ for  $\chi=1-\g>0$ and some $0<\e<\chi/10$ and notice that
 \begin{align}\label{kappa}
   29/30<\g\ \Longleftrightarrow\ 10(1-\g)+20\chi<1.
 \end{align}
  The proof will be completed if we show that $|I_1(x)|\lesssim N^{-1}$ and $|I_1(x+h)-I_1(x)|\lesssim N^{-2}|h|$ for $x, h\in\Z$ and for every $2\le j\le8$ we have $|I_j(x)|\lesssim N^{-1-\chi}$ where $x>\vp(N)$. \\
  \noindent \textsf{\textbf{\underline{Estimates for $I_1(x)$.}}} Observe that
  \begin{align*}
  |I_1(x)|\lesssim\frac{1}{\vp(N)^2}\frac{\vp(N)^2N}{N^2}=\frac{1}{N},
\end{align*}
  and
  \begin{align*}
    |I_1(x+h)-I_1(x)|\lesssim\frac{1}{\vp(N)^2}\sum_{n\in\N}\vp'(n)\eta\left(\frac n N\right)\bigg|\int_{n+x}^{n+x+h}\vp''(t)\eta\left(\frac{t}{N}\right)
    +\vp'(t)\frac{1}{N}\eta'\left(\frac{t}{N}\right)dt\bigg|\\
    \lesssim \frac{|h|}{\vp(N)^2}\frac{\vp(N)^2N}{N^3}\lesssim \frac{|h|}{N^2},
  \end{align*}
  as claimed.\\
\noindent \textsf{\textbf{\underline{Estimates for $I_2(x), I_3(x), I_5(x)$.}}} Applying estimates \eqref{vdccor1} with $F_m^x(n)=\Psi_2(m, n, x)$ or $F_m^x(n)=\Psi_3(m, n, x)$ to the inner sum in $I_2(x)$
and $I_3(x)$ respectively we obtain that
\begin{align}\label{lem:1kerest1}
  |I_2(x)|+|I_3(x)|\lesssim  \frac{1}{\vp(N)^2}\sum_{0<m\le M}\frac{1}{m}\cdot\frac{m\vp(N)^2}{N^2}\cdot
  m^{1/2}N\big(\vp(N)\s(N)\big)^{-1/2}\\
 \nonumber \lesssim\frac{M^{3/2}}{N\vp(N)^{1/2}\s(N)^{1/2}}\lesssim \frac{1}{N^{1+\chi}}\frac{N^{3/2+4\chi+2\e}}{N^{2\g}}\lesssim \frac{1}{N^{1+\chi}},
\end{align}
since $|F_m^x(t)|\lesssim \frac{m\vp(N)^2}{N^2}$, $|\frac{d}{dt}F_m^x(t)|\lesssim \frac{m\vp(N)^2}{N^3}$, $\s(x)^{-1}\lesssim x^{\e_1}$ and $x^{\g-\e_1}\lesssim \vp(x)$ for any $\e_1>0$.
The last inequality in \eqref{lem:1kerest1} holds since by \eqref{kappa} we have
\begin{align*}
  3+8\chi+4\e-4\g=4(1-\g)+8\chi+4\e-1<10(1-\g)+20(1-\g)-1<0.
\end{align*}
Arguing in a similar way as above and applying \eqref{vdccor2} with $\kappa=1$ and $F_{m_1, m_2}^x(n)=\Psi_5(m_1, m_2, n, x)$
to the inner sum in $I_5(x)$ we obtain that
\begin{align}\label{lem:1kerest2}
  |I_5(x)|\lesssim  \frac{1}{\vp(N)^2}\sum_{0<m_1\le M}\sum_{0<m_2\le M}\frac{1}{m_1m_2}\cdot\frac{m_1m_2\vp(N)^2}{N^2}\cdot
  \frac{\max\{m_1, m_2\}^{2/3}N^{4/3}}{\vp(N)^{2/3}}\\
 \nonumber \lesssim\frac{M^{8/3}}{N^{2/3}\vp(N)^{2/3}}\lesssim
 \frac{1}{N^{1+\chi}}\frac{N^{3+19\chi/3+3\e}}{N^{10\g/3}}\lesssim \frac{1}{N^{1+\chi}},
\end{align}
since $|F_{m_1, m_2}^x(x)|\lesssim \frac{m_1m_2\vp(N)^2}{N^2}$, $|\frac{d}{dt}F_{m_1, m_2}^x(x)|\lesssim \frac{m_1m_2\vp(N)^2}{N^3}$. The last inequality in \eqref{lem:1kerest2} holds since by \eqref{kappa} we have
\begin{align*}
  9+19\chi+9\e-10\g<10(1-\g)+20\chi-1<0.
\end{align*}
\noindent \textsf{\textbf{\underline{Estimates for $I_4(x), I_6(x), I_7(x)$.}}} According to \eqref{four31}, \eqref{four32} and Lemma \ref{lem:3} we have
\begin{multline}\label{lem:1kerest3}
 |I_4(x)|+|I_6(x)|+|I_7(x)|\\
 \lesssim\frac{\log M}{\vp(N)^2}\sum_{p, q\in\{0,1\}}\sum_{n\in\N}\min\left\{1, \frac{1}{M\|\vp(n+px+q)\|}\right\}\eta\left(\frac {n}{N}\right)\eta\left(\frac{n+x}{N}\right)\\
 \lesssim \frac{N\log^2M}{M\vp(N)^2}+\frac{NM^{1/2}\log^2M}{\s(N)^{1/2}\vp(N)^{5/2}}\lesssim
 \frac{\log^2N}{N^{2\chi+\e}\vp(N)}+\frac{N^{3/2+\chi+\e/2}\log^2N}{\s(N)^{1/2}\vp(N)^{3}}\\
 \lesssim \frac{1}{N^{\chi+1-\g+\g}}+\frac{1}{N^{1+\chi}}\frac{N^{5/2+2\chi+2\e}}{N^{3\g}}\lesssim \frac{1}{N^{1+\chi}},
\end{multline}
since by \eqref{kappa} we have
\begin{align*}
  5+4\chi+4\e-6\g=6(1-\g)+4\chi+4\e-1<10(1-\g)+20\chi-1<0.
\end{align*}
\noindent \textsf{\textbf{\underline{Estimates for $I_8(x)$.}}} In view of definition \eqref{defth}  we get
\begin{align*}
  |I_8(x)|\lesssim\frac{1}{\vp(N)^2}\sum_{n\in\N}\frac{\vp(N)}{N^2}\eta\left(\frac n N\right)\eta\left(\frac{n+x}{N}\right)\lesssim\frac{1}{N\vp(N)}\lesssim\frac{1}{N^{1+\g-\e}}\lesssim\frac{1}{N^{1+\chi}},
  \end{align*}
  since $29/30<\g< 1$ and $0<\e<\chi/10=(1-\g)/10<2\g-1$ which in turn gives $\g-\e>\g+1-2\g=\chi$. The proof of Lemma \ref{lem:1ker} is completed.
\end{proof}

\section{Proof of Theorem \ref{thm:1max}}\label{sectmax}
The maximal functions which will occur in this section will be initially defined for any nonnegative finitely supported function $f\ge0$ and unless otherwise stated $f$ is always such a function.  Recall that
\begin{align*}
  \mathcal{M}_hf(x)=\sup_{N\in\mathcal{D}}|K_{h, N}*f(x)|,
\end{align*}
where $\mathcal{D}=\{2^n: n\in\N\}$ for $K_{h, N}$ defined in \eqref{thm:1max2} with normalizing factor $\vp(N)$ instead of $|\mathbf{N}_h\cap[1, N]|$, but this is not important here, since $|\mathbf{N}_h\cap[1, N]|\sim\vp(N)$.

Theorem \ref{thm:1max} will follow from Theorem \ref{thm:3CZ} which is stated in a more abstract way. The idea of proof of Theorem \ref{thm:3CZ} was pioneered by Fefferman \cite{F} and after that was applied to maximal functions in continuous settings in \cite{C}. Recently, it turned out that the method is flexible enough and was applied to study discrete maximal functions, see \cite{UZ}, \cite{LaV} and \cite{C1}.

The crucial role in the proof of Theorem \ref{thm:3CZ} will be played by Lemma \ref{lem:4} stated at the end of this section. Its proof will strongly exploit the nature of the kernel $K_n*\widetilde{K}_n(x)$, i.e.
\eqref{thm:3CZ2}, \eqref{thm:3CZ3} and \eqref{thm:3CZ4}. In our case these proporties will follow from Lemma \ref{lem:1ker0} and Lemma \ref{lem:1ker}.
\begin{thm}\label{thm:3CZ}
Let $\mathcal{M}f(x)=\sup_{n\in\N}|K_n*f(x)|$ be a maximal function corresponding with a family of nonnegative kernels $(K_n)_{n\in\N}\subseteq\ell^1(\Z)$ such that $\|\mathcal{M}f\|_{\ell^{\8}(\Z)}\lesssim\|f\|_{\ell^{\8}(\Z)}$ for all $f\in\ell^{\8}(\Z)$ and let $(F_n)_{n\in\N}$ be a family of nonnegative functions. Assume that there are sequences $(d_n)_{n\in\N}, (D_n)_{n\in\N}\subseteq[1, \8)$ such that $|\supp\ K_n|=d_n$, $\supp\ K_n\subseteq[0, D_n]$, $\supp\ F_n\subseteq[-D_n, D_n]$, $d_n\le  D_n^{\e_0}$ for some $\e_0\in(0, 1)$ and there is a finite constant $M>1$ such that $Md_n\le d_{n+1}, MD_n\le D_{n+1}$ for all $n\in\N$. Moreover, there exists $\e_1>0$ such that for every $n\in\N$ and $x\in\Z$ we have
\begin{align}\label{thm:3CZ2}
  |K_n*\widetilde{K}_n(x)-F_n(x)|\lesssim D_n^{-1-\e_1},
\end{align}
where $\widetilde{K}_n(x)=K_n(-x)$, and
\begin{align}\label{thm:3CZ3}
  F_n(0)\lesssim d_n^{-1}, \ \ \mbox{and}\ \ \ |F_n(x)|\lesssim D_n^{-1} \ \ \mbox{for every $x\not=0$}.
\end{align}
Finally, for some $\e_2\in(0, 1]$ we have
\begin{align}\label{thm:3CZ4}
  |F_n(x+y)-F_n(x)|\lesssim D_n^{-2}|y|, \ \ \mbox{whenever \ $|x|, |x+y|\gtrsim d_n^{\e_2}.$}
\end{align}
Then
\begin{align}\label{thm:3CZ5}
  \|\mathcal{M}f\|_{\ell^{1, \8}(\Z)}\lesssim \|f\|_{\ell^1(\Z)}.
\end{align}
\end{thm}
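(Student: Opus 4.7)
My plan is to follow the Fefferman--Christ method used in \cite{F}, \cite{C}, and \cite{UZ}: combine a Calder\'on--Zygmund decomposition at height $\lambda$ with an $\ell^2$-estimate on a ``recalcitrant'' tail obtained via the $TT^*$ identity. Fix $\lambda>0$ and split $f=g+b$ where $b=\sum_Q b_Q$ is summed over the maximal dyadic intervals of the CZ selection at height $\lambda$: each $b_Q$ is supported on $Q$, has mean zero on $Q$, and satisfies $\|b_Q\|_1\lesssim\lambda|Q|$, while $\|g\|_\infty\lesssim\lambda$ and $\|g\|_1\lesssim\|f\|_1$; the exceptional set $\Omega=\bigcup_Q Q$ has $|\Omega|\lesssim\|f\|_1/\lambda$. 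The good part is immediate: the hypothesis $\|\mathcal{M}f\|_{\ell^\infty}\lesssim\|f\|_{\ell^\infty}$ gives $\|\mathcal{M}g\|_{\ell^\infty}\lesssim\lambda$, so $\{\mathcal{M}g>C\lambda\}=\emptyset$ for large $C$, and the whole problem reduces to the bad part.

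Next I would assign each $Q$ a scale $n(Q)$ with $D_{n(Q)}\simeq|Q|$ and enlarge $\Omega$ to $E=\bigcup_Q\bigl(Q+[-CD_{n(Q)},CD_{n(Q)}]\bigr)$; the lacunarity $MD_n\le D_{n+1}$ keeps $|E|\lesssim\|f\|_1/\lambda$. For $x\notin E$ and $n\le n(Q)$, the support conditions on $K_n$ and $b_Q$ force $K_n*b_Q(x)=0$, so outside $E$ only the recalcitrant tail
\[
\mathcal{R}b(x)=\sup_n\Big|\sum_{Q:\,n(Q)<n}K_n*b_Q(x)\Big|
\]
is nontrivial, and by Chebyshev it remains to prove the $\ell^2$-bound $\|\mathcal{R}b\|_{\ell^2}^2\lesssim\lambda\|f\|_1$.

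This $\ell^2$-estimate is the content of Lemma~\ref{lem:4}. I would linearize the supremum by a measurable $N\colon\mathbb{Z}\to\mathbb{N}$ so that $\mathcal{R}b(x)\lesssim|K_{N(x)}*\tilde b(x)|$, where $\tilde b$ is the recalcitrant part of $b$ relative to $N(x)$, and then use the $TT^*$ identity $\|K_n*u\|_{\ell^2}^2=\langle K_n*\widetilde K_n*u,u\rangle$ in linearized form to reduce matters to the structure of $K_n*\widetilde K_n$. Substituting \eqref{thm:3CZ2} gives $K_n*\widetilde K_n=F_n+O(D_n^{-1-\varepsilon_1})$; the error contributes $\sum_n D_n^{-\varepsilon_1}\|b\|_1^2$, which is negligible by lacunarity of $\{D_n\}$. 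The main term $F_n$ splits by \eqref{thm:3CZ3}--\eqref{thm:3CZ4} into a delta-like piece of mass $\lesssim d_n^{-1}$ at the origin and a slowly varying piece of size $D_n^{-1}$ with Lipschitz constant $D_n^{-2}$: the delta produces the diagonal contribution $\sum_Q d_{n(Q)}^{-1}\|b_Q\|_2^2\lesssim\lambda\|b\|_1$, while the slowly varying piece is absorbed using the mean-zero of each $b_Q$ and the Lipschitz bound.

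The hardest step is this final $\ell^2$-estimate. The off-diagonal pairs $Q\ne Q'$ couple bad functions living on very different dyadic scales through the kernel $F_n$; to beat these couplings one must simultaneously exploit the mean-zero of $b_Q,b_{Q'}$, the Lipschitz property \eqref{thm:3CZ4}, and the scale separation $n>\max(n(Q),n(Q'))$ forced by the recalcitrant selection. The assumption $d_n\le D_n^{\varepsilon_0}$ is precisely what makes the delta-type diagonal term $d_{n(Q)}^{-1}\|b_Q\|_2^2$ comparable to $\lambda|Q|$ rather than something larger, which is the last ingredient needed to close the estimate. Once Lemma~\ref{lem:4} delivers the $\ell^2$-bound, the weak-type inequality \eqref{thm:3CZ5} follows from Chebyshev and the decomposition.
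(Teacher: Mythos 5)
Your overall scheme (Calder\'on--Zygmund at height $\lambda$, support considerations for the small scales, and a $TT^*$/$\ell^2$ argument on the recalcitrant tail using \eqref{thm:3CZ2}--\eqref{thm:3CZ4}) is the right one, but there is a genuine gap at the diagonal term, and it is exactly the point where this theorem differs from the classical Fefferman--Christ setting. You claim that the delta piece of $K_n*\widetilde K_n$ produces $\sum_Q d_{n(Q)}^{-1}\|b_Q\|_{\ell^2}^2\lesssim\lambda\|b\|_{\ell^1}$. This is false for Calder\'on--Zygmund bad functions on $\ell^1(\Z)$, because the decomposition gives no height control on $b_Q$: take $f$ a single point mass of size $K\gg\lambda$, so that the selected cube $Q$ has $|Q|\simeq K/\lambda\simeq D_{n(Q)}$ and $\|b_Q\|_{\ell^2}^2\simeq K^2$; then $d_{n(Q)}^{-1}\|b_Q\|_{\ell^2}^2\simeq K^2 D_{n(Q)}^{-\e_0}\gg\lambda K\simeq\lambda\|b_Q\|_{\ell^1}$, since $d_n\le D_n^{\e_0}$ with $\e_0<1$. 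Scattered delta masses are precisely the enemy for weak $(1,1)$ bounds for sparse averages, and your argument does not survive them. The paper's proof handles this by an additional, $n$-dependent splitting of the bad part: $b_s=b_s^n+B_s^n+g_s^n$, where $b_s^n=b_s\mathbf{1}_{\{|b_s|>\lambda d_n\}}$ is removed \emph{before} the $\ell^2$ step and estimated by a pure support-counting argument (the set $S_2$), using $|\supp K_n|=d_n$ and the lacunarity $Md_n\le d_{n+1}$ so that $\sum_{n\le k}d_n\lesssim d_k$; only the truncated, mean-zero part $B_s^n$ (built from $h_s^n=b_s\mathbf{1}_{\{|b_s|\le\lambda d_n\}}$) enters the $TT^*$ argument, and then the diagonal is summed pointwise via $\sum_{n:\,d_n\ge\lambda^{-1}|b_s(x)|}d_n^{-1}|b_s(x)|^2\lesssim\lambda|b_s(x)|$. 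Without some version of this truncation your $\ell^2$ estimate cannot close, and the hypothesis $d_n\le D_n^{\e_0}$ alone does not rescue it.

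A secondary inaccuracy: your treatment of the error term $E_n=K_n*\widetilde K_n-F_n$ as contributing $\sum_n D_n^{-\e_1}\|b\|_{\ell^1}^2$ is not sufficient, since $\|b\|_{\ell^1}^2\lesssim\|f\|_{\ell^1}^2$ is in general much larger than $\lambda\|f\|_{\ell^1}$. In the paper this is handled by localizing one of the two bad functions to blocks of length $\simeq D_n$, so that its local $\ell^1$ norm is $\lesssim\lambda D_n$ (inequality \eqref{lem:4p2}), which converts $D_n^{-1-\e_1}$ into a summable factor $2^{-\e_1 s_1}\lambda\|B^n_{s(n)-1-s_2}\|_{\ell^1(\Z)}$; the same localization is also needed when you exploit the Lipschitz bound \eqref{thm:3CZ4} for the slowly varying piece. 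These localizations, together with the height truncation above, are the ingredients your sketch is missing.
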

Before we prove Theorem \ref{thm:3CZ} we show how it implies Theorem \ref{thm:1max}. Indeed, it suffices to take
$K_n(x)=K_{h, 2^n}(x)$, $d_n\simeq\vp(2^n)$, $D_n\simeq2^n$ and
\begin{align*}
  F_n(x)=\left\{ \begin{array} {ll}
K_{h, 2^n}*\widetilde{K}_{h, 2^n}(x), & \mbox{if $0\le|x|\le\vp(2^n)$,}\\
\frac{1}{\vp(2^n)^2}\sum_{k\in\N}\vp'(k)\vp'(k+|x|)\eta\left(\frac {k}{2^n}\right)\eta\left(\frac{k+|x|}{2^n}\right),& \mbox{if $|x|>\vp(2^n)$.}
\end{array}
\right.
\end{align*}
It is easy to see that $F_n(x)$ has desired properties by Lemma \ref{lem:1ker0} and \ref{lem:1ker}.
\begin{proof}
Let $f\in\ell^1(\Z)$ and $\la>0$. We now perform a Calder\'{o}n--Zygmund decomposition at height $\la>0$. Then there exist a finite constant $C>0$, a set of indexes $\mathcal{B}\subseteq\N\cup\{0\}\times\Z$ and functions $g$ and $(b_{s, j})_{(s, j)\in\mathcal{B}}$ such that
\begin{align*}
  f=g+\sum_{(s, j)\in\mathcal{B}}b_{s, j}=g+\sum_{s\ge0}b_s,
\end{align*}
and
\begin{itemize}
  \item $\|g\|_{\ell^{\8}(\Z)}\le\la$,
  \item $b_{s, j}$ is supported on the dyadic cube $Q_{s, j}=[j2^s, (j+1)2^s)\cap\Z$,
  \item for any fixed $s\ge0$ $$b_s=\sum_{j\in\Z:\ (s, j)\in\mathcal{B}}b_{s, j},$$
  \item $\{Q_{s, j}: (s, j)\in\mathcal{B}\}$ is a disjoint collection,
  \item $\|b_{s, j}\|_{\ell^1(\Z)}\le\la|Q_{s, j}|=\la2^s$,
  \item The constant $C>0$ is independent of $\la>0$ and $f$. Moreover,  $$\sum_{(s, j)\in\mathcal{B}}|Q_{s, j}|\le\frac{C\|f\|_{\ell^1(\Z)}}{\la}.$$
\end{itemize}
Note that we have not assumed a cancellation condition for $b_{s, j}$. However, instead of that
we make further modifications of $b_{s, j}$. Namely,  we split $b_s$ as follows
\begin{align*}
  b_{s}=b_{s}^n+B_{s}^n+g_{s}^n,
\end{align*}
where (in the sequel we will use the following convenient notational convention $[f]_Q=\frac{1}{|Q|}\int_{Q}f$),
\begin{itemize}
  \item $b_{s}^n(x)=b_s(x)\mathbf{1}_{\{x\in\Z:\ |b_s(x)|>\la d_n\}}(x)$,
  \item $h_s^n(x)=b_s(x)-b_s^n(x)=b_s(x)\mathbf{1}_{\{x\in\Z:\ |b_s(x)|\le\la d_n\}}(x)$,
  \item $B_s^n(x)=h_s^n(x)-g_s^n(x)$, where
  \item for any fixed $s\ge0$  $$g_s^n(x)=\sum_{j\in\Z:\ (s, j)\in\mathcal{B}}[h_s^n]_{Q_{s, j}}\mathbf{1}_{Q_{s, j}}(x).$$
\end{itemize}
The task now is to show that $\mathcal{M}f(x)$ is of weak type $(1, 1)$. Since
\begin{align*}
  f=g+\sum_{s\ge0}g_{s}^n+\sum_{s\ge0}b_{s}^n+\sum_{s\ge0}B_{s}^n,
\end{align*}
we observe that
\begin{align*}
  \{x\in\Z:\ |\mathcal{M}f(x)|>4C\la\}&\subseteq\{x\in\Z:\ \sup_{n\in\N}\Big|K_n*\Big(g+\sum_{s\ge0}g_s^n\Big)(x)\Big|>C\la\}\\
  &\cup\{x\in\Z:\ \sup_{n\in\N}\Big|K_n*\Big(\sum_{s\ge0}b_s^n\Big)(x)\Big|>C\la\}\\
  &\cup\{x\in\Z:\ \sup_{n\in\N}\Big|K_n*\Big(\sum_{s=0}^{s(n)-1}B_s^n\Big)(x)\Big|>C\la\}\\
  &\cup\{x\in\Z:\ \sup_{n\in\N}\Big|K_n*\Big(\sum_{s=s(n)}^{\8}B_s^n\Big)(x)\Big|>C\la\}=S_1\cup S_2\cup S_3\cup S_4,
\end{align*}
 where $s(n)=\min\{s\in\N: 2^s\ge D_n\}$. We shall deal with each set separately.
\subsection{Step 1. Estimates for $|S_1|$.} If $C>0$ is sufficiently large then
\begin{align*}
  |S_1|\le|\{x\in\Z:\ \sup_{n\in\N}|K_n*g(x)|+\sup_{n\in\N}\Big|K_n*\Big(\sum_{s\ge0}g_s^n\Big)(x)\Big|>C\la\}|=0,
\end{align*}
since $\|g\|_{\ell^{\8}(\Z)}\le\la$ and $|[h_s^n]_{Q_{s, j}}|\le[|h_s^n|]_{Q_{s, j}}\le[|b_s^n|]_{Q_{s, j}}
\le [|b_s|]_{Q_{s, j}}\le|Q_{s, j}|^{-1}\|b_{s, j}\|_{\ell^1(\Z)}\le\la$, which in turn implies that
\begin{align*}
  \Big|\sum_{s\ge0}g_s^n(x)\Big|\le\sum_{(s, j)\in\mathcal{B}}|[h_s^n]_{Q_{s, j}}|\mathbf{1}_{Q_{s, j}}(x)
  \le\la\sum_{(s, j)\in\mathcal{B}}\mathbf{1}_{Q_{s, j}}(x)\le\la.
\end{align*}
\subsection{Step 2. Estimates for $|S_2|$.}
\begin{align*}
  |S_2|&\le|\{x\in\Z:\ \sup_{n\in\N}\Big|K_n*\Big(\sum_{s\ge0}b_s^n\Big)(x)\Big|>C\la\}|
  \le\sum_{n\in\N}\sum_{s\ge0}|\{x\in\Z:\ K_n*|b_s^n(x)|>0\}|\\
  &\le\sum_{n\in\N}\sum_{s\ge0}|\supp\ K_n|\cdot|\{x\in\Z:\ |b_s^n(x)|>0\}|\lesssim
  \sum_{n\in\N}\sum_{s\ge0}d_n\cdot|\{x\in\Z:\ |b_s(x)|>\la d_n\}|\\
  &\le\sum_{s\ge0}\sum_{n\in\N}d_n\sum_{k\ge n}|\{x\in\Z:\ \la d_k<|b_s(x)|\le\la d_{k+1}\}|\\
  &=\sum_{s\ge0}\sum_{k\in\N}|\{x\in\Z:\ \la d_k<|b_s(x)|\le\la d_{k+1}\}|
  \cdot\Big(\sum_{n=1}^kd_n\Big)\\
  &\lesssim\frac{1}{\la}\sum_{s\ge0}\sum_{k\in\N}\la d_k|\{x\in\Z:\ \la d_k<|b_s(x)|\le\la d_{k+1}\}|
  \lesssim \frac{1}{\la}\sum_{s\ge0}\|b_s\|_{\ell^1(\Z)}\lesssim \frac{\|f\|_{\ell^1(\Z)}}{\la},
\end{align*}
as desired.
\subsection{Step 3. Estimates for $|S_4|$.} It remains to show that
\begin{align*}
  |S_4|\le\frac{C\|f\|_{\ell^1(\Z)}}{\la}\ \ \mbox{for every $s\ge s(n),$}
\end{align*}
which will follow from the definition of $s(n)=\min\{s\in\N: 2^s\ge D_n\}$. Indeed, $\supp\ K_n\subseteq [0, D_n]\subseteq[0, 2^s]$ since  $s\ge s(n).$ Thus
\begin{align*}
  \supp\ K_n*B_s^n\subseteq\supp\ K_n+\supp\ B_s^n\subseteq[0, 2^s]+\bigcup_{k\in\Z}Q_{s, k}\subseteq\bigcup_{k\in\Z}3Q_{s, k},
\end{align*}
where $3Q$ denotes the unique cube with the same center as $Q$ and side length equal to $3$ times of the side length of $Q$. Therefore,
\begin{align*}
  S_4\subseteq\bigcup_{n\in\N}\bigcup_{s\ge s(n)}\{x\in\Z:\ |K_n*B_s^n(x)|>0\}\subseteq\bigcup_{n\in\N}\bigcup_{s\ge s(n)}\bigcup_{k\in\Z}3Q_{s, k}
  \subseteq\bigcup_{(s, k)\in\mathcal{B}}3Q_{s, k},
\end{align*}
and consequently
\begin{align*}
  |S_4|\le\sum_{(s, k)\in\mathcal{B}}|3Q_{s, k}|\lesssim\sum_{(s, k)\in\mathcal{B}}|Q_{s, k}|
  \le\frac{C\|f\|_{\ell^1(\Z)}}{\la}.
\end{align*}
\subsection{Step 4. Estimates for $|S_3|$.} What is left is to estimate $S_3$. For this purpose we will proceed as follows. Notice that by Lemma \ref{lem:4} we obtain
\begin{multline*}
  \la^2|\{x\in\Z:\ \sup_{n\in\N}\Big|K_n*\bigg(\sum_{s=0}^{s(n)-1}B_s^n\bigg)(x)\Big|>C\la\}|\\
  =\la^2|\{x\in\Z:\ \sup_{n\in\N}\Big|\sum_{s=0}^{s(n)-1}K_n*B_{s(n)-1-s}^n(x)\Big|>C\la\}|\\
  \lesssim\sum_{x\in\Z}\sup_{n\in\N}\Big|\sum_{s=0}^{s(n)-1}K_n*B_{s(n)-1-s}^n(x)\Big|^2
  \lesssim \sum_{n\in\N}\Big\|\sum_{s=0}^{s(n)-1}K_n*B_{s(n)-1-s}^n\Big\|_{\ell^2(\Z)}^2\\
  \le\sum_{n\in\N}\sum_{s=0}^{s(n)-1}\big\|K_n*B_{s(n)-1-s}^n\big\|_{\ell^2(\Z)}^2
  +2\sum_{n\in\N}\ \sum_{0\le s_2<s_1\le s(n)-1}\big|\big\langle K_n*B_{s(n)-1-s_1}^n, K_n*B_{s(n)-1-s_2}^n\big\rangle_{\ell^2(\Z)}\big|\\
  \lesssim\sum_{n\in\N}\sum_{s=0}^{s(n)-1}2^{-\d s}\la\|B_{s(n)-1-s}^n\|_{\ell^1(\Z)}
  +\sum_{n\in\N}\sum_{s=0}^{s(n)-1}d_n^{-1}\|B_{s(n)-1-s}^n\|_{\ell^2(\Z)}^2\\
  +\sum_{n\in\N}\ \sum_{0\le s_2<s_1\le s(n)-1}2^{-\d s_1}\la\|B_{s(n)-1-s_2}^n\|_{\ell^1(\Z)}.
\end{multline*}
 Then we can easily see that
  \begin{multline*}
    \sum_{n\in\N}\ \sum_{0\le s_2<s_1\le s(n)-1}2^{-\d s_1}\la\|B_{s(n)-1-s_2}^n\|_{\ell^1(\Z)}
  \lesssim\sum_{n\in\N}\ \sum_{s_2=0}^{s(n)-1}\ \sum_{s_1=s_2+1}^{s(n)-1}2^{-\d s_1}\la\|B_{s(n)-1-s_2}^n\|_{\ell^1(\Z)}\\
  \lesssim
  \sum_{n\in\N}\ \sum_{s_2=0}^{s(n)-1}2^{-\d s_2}\la\|B_{s(n)-1-s_2}^n\|_{\ell^1(\Z)}
  \lesssim \sum_{s_2\ge0}2^{-\d s_2}\la\ \sum_{s\in\N}\ \sum_{j\in\Z}\|b_{s, j}\|_{\ell^1(\Z)}\\
  \lesssim \la^2\sum_{(s, j)\in\mathcal{B}}|Q_{s, j}|\lesssim \la \|f\|_{\ell^1(\Z)}.
  \end{multline*}
  The proof will be completed if we show that
  \begin{align*}
  \sum_{n\in\N}\sum_{s=0}^{s(n)-1}d_n^{-1}\|B_{s}^n\|_{\ell^2(\Z)}^2\lesssim \la\|f\|_{\ell^1(\Z)}.
  \end{align*}
  For this purpose take $x\in Q_{s_0, j_0}$ and observe, since $B_s^n$'s have disjoint supports and $|[h_s^n]_{Q_{s, j}}|\le \la$, that
  \begin{multline*}
  \sum_{n\in\N}\sum_{s=0}^{s(n)-1}d_n^{-1}B_{s}^n(x)^2\lesssim
  \sum_{n\in\N}d_n^{-1}|b_{s_0}(x)|^2\mathbf{1}_{\{y\in\Z:\ |b_{s_0}(y)|\le\la d_n\}}(x)
  +\sum_{n\in\N}d_n^{-1}\la ^2\mathbf{1}_{\{\mbox{\tiny supp}\ b_{s_0}\}}(x)\\
  \lesssim |b_{s_0}(x)|^2\sum_{\genfrac{}{}{0pt}{}{n\in\N}{d_n\ge\la^{-1}|b_{s_0}(x)|}}d_n^{-1}+
  \la ^2\mathbf{1}_{\{\mbox{\tiny supp}\ b_{s_0}\}}(x)\lesssim \sum_{s\ge0}\la |b_{s}(x)|
  +\la ^2\mathbf{1}_{\{\mbox{\tiny supp}\ b_{s}\}}(x).
  \end{multline*}
  Therefore,
  \begin{multline*}
    \la^{-2} \sum_{n\in\N}\sum_{s=0}^{s(n)-1}d_n^{-1}\|B_{s}^n\|_{\ell^2(\Z)}^2
    \lesssim\frac{1}{\la}\sum_{s\ge0}\|b_s\|_{\ell^1(\Z)}+\sum_{x\in\Z}\mathbf{1}_{\{\mbox{\tiny supp}\ b_{s}\}}(x)\lesssim\sum_{(s, j)\in\mathcal{B}}|Q_{s, j}|\lesssim \frac{1}{\la}\|f\|_{\ell^1(\Z)},
  \end{multline*}
  as claimed, and the proof of Theorem \eqref{thm:3CZ} is finished.
\end{proof}

\begin{lem}\label{lem:4}
Under the assumptions of Theorem \ref{thm:3CZ} there exists $\d>0$ such that for every $0\le s_2<s_1\le s(n)-1$
\begin{align}\label{lem:4est1}
  \big|\big\langle K_n*B_{s(n)-1-s_1}^n, K_n*B_{s(n)-1-s_2}^n\big\rangle_{\ell^2(\Z)}\big|\lesssim
  2^{-\d s_1}\la\|B_{s(n)-1-s_2}^n\|_{\ell^1(\Z)},
\end{align}
and for every $0\le s\le s(n)-1$
\begin{align}\label{lem:4est2}
  \| K_n*B_{s(n)-1-s}^n\|_{\ell^2(\Z)}^2\lesssim
  2^{-\d s}\la\|B_{s(n)-1-s}^n\|_{\ell^1(\Z)}+d_n^{-1}\|B_{s(n)-1-s}^n\|_{\ell^2(\Z)}^2.
\end{align}
\end{lem}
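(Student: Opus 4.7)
The plan is to apply the standard adjoint manipulation
\[
\bigl\langle K_n*B_1,\,K_n*B_2\bigr\rangle_{\ell^2(\Z)} = \bigl\langle (K_n*\widetilde K_n)*B_1,\,B_2\bigr\rangle_{\ell^2(\Z)},
\]
where I abbreviate $B_i = B_{s(n)-1-s_i}^n$ (and $B=B_{s(n)-1-s}^n$ for \eqref{lem:4est2}), and then to substitute the pointwise decomposition \eqref{thm:3CZ2} to replace $K_n*\widetilde K_n$ by $F_n$ modulo an $\ell^\infty$ error of size $D_n^{-1-\e_1}$ supported in $[-D_n,D_n]$. The error contribution is handled by $\ell^1$--$\ell^\infty$ duality using $\|B_s^n\|_{\ell^\infty}\lesssim\la d_n$ (which follows from $|h_s^n|\le\la d_n$ and $|g_s^n|\le\la$) together with $d_n\le D_n^{\e_0}$ and $\e_0<1$; this is comfortably absorbed into the right-hand sides of \eqref{lem:4est1} and \eqref{lem:4est2}.

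The main term $\langle F_n*B_1,B_2\rangle$ will be treated by exploiting two structural facts: each $B_s^n$ has vanishing mean on every dyadic cube $Q_{s,j}$ in its support (built into the construction through $g_s^n$), with $\|B_s^n\mathbf{1}_{Q_{s,j}}\|_{\ell^1}\lesssim\la|Q_{s,j}|$; and, by \eqref{thm:3CZ3}--\eqref{thm:3CZ4}, $F_n$ is Lipschitz with constant $\lesssim D_n^{-2}$ and bounded by $D_n^{-1}$ outside $[-Cd_n^{\e_2},Cd_n^{\e_2}]$, whereas inside that window only the weaker pointwise bound $d_n^{-1}$ is available. I decompose $F_n = F_n^{\mathrm{far}} + F_n^{\mathrm{near}}$ accordingly, and set $L_1 = 2^{s(n)-1-s_1}\lesssim D_n\cdot 2^{-s_1}$. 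For each cube $Q = Q_{s(n)-1-s_1,j}$ with centre $y_Q$ and each $x$ outside a slight enlargement of $Q$, the mean-zero property yields
\[
\Bigl|\sum_{y\in Q}F_n^{\mathrm{far}}(x-y)B_1(y)\Bigr| = \Bigl|\sum_{y\in Q}\bigl[F_n^{\mathrm{far}}(x-y)-F_n^{\mathrm{far}}(x-y_Q)\bigr]B_1(y)\Bigr|\lesssim D_n^{-2}L_1\cdot\la L_1.
\]
Summing over the $O(D_n/L_1)$ cubes meeting $[x-D_n,x+D_n]$ produces the pointwise bound $|F_n^{\mathrm{far}}*B_1(x)|\lesssim\la L_1/D_n\lesssim\la\cdot 2^{-s_1}$, and pairing against $B_2$ yields the desired $2^{-s_1}\la\|B_2\|_{\ell^1}$; the bounded number of exceptional cubes close to $x$ contribute terms of comparable size using only $|F_n^{\mathrm{far}}|\lesssim D_n^{-1}$.

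For the near piece, $\|F_n^{\mathrm{near}}\|_{\ell^1}\lesssim d_n^{-1}$ (after checking that $d_n^{\e_2}/D_n\lesssim d_n^{-1}$ under the assumption $d_n\le D_n^{\e_0}$ with $\e_0$ small), and Young's inequality then gives $\|F_n^{\mathrm{near}}*B\|_{\ell^2}\lesssim d_n^{-1}\|B\|_{\ell^2}$, which via Cauchy--Schwarz produces the diagonal contribution $d_n^{-1}\|B\|_{\ell^2}^2$ appearing in \eqref{lem:4est2}. The main obstacle I anticipate is extracting honest $2^{-\d s_1}$ decay from $F_n^{\mathrm{near}}$ for the off-diagonal estimate \eqref{lem:4est1}: since $F_n^{\mathrm{near}}$ is not slowly varying, the mean-zero argument cannot be applied to $F_n^{\mathrm{near}}*B_1$ directly, so one must instead exploit the strict hierarchy of scales $L_1 < L_2$ coming from $s_1>s_2$---on which $B_2$ is essentially constant on intervals of length $L_1$---and set up a telescoping or Schur-type argument that pairs the mean-zero of $B_1$ at scale $L_1$ against the averaged $B_2$. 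This last step is the technically delicate part of the proof.
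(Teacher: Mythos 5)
The decisive gap is in your treatment of the ``near'' region, and hence of the off--diagonal bound \eqref{lem:4est1}, and it stems from a misreading of \eqref{thm:3CZ3}: the size bound $|F_n(x)|\lesssim D_n^{-1}$ holds for \emph{every} $x\neq0$, not only for $|x|\gtrsim d_n^{\e_2}$; only the smoothness bound \eqref{thm:3CZ4} is restricted to that range, and the weaker value $d_n^{-1}$ occurs solely at the single point $x=0$. The correct splitting is therefore $F_n=F_n(0)\d_0+G_n$: the delta mass pairs $B^n_{s(n)-1-s_1}$ directly against $B^n_{s(n)-1-s_2}$, which vanishes for $s_1\neq s_2$ (disjoint supports) and produces exactly the $d_n^{-1}\|B\|_{\ell^2(\Z)}^2$ term on the diagonal; for $G_n$ one runs your far--cube mean--zero/Lipschitz argument, while the cubes within distance $\lesssim d_n^{\e_2}+2^{s(n)-1-s_1}$ of $x$ are handled simply by counting them (at most $\max\{1,\,d_n^{\e_2}2^{-(s(n)-1-s_1)}\}$ of them) and using $|G_n|\lesssim D_n^{-1}$ together with the per--cube mass $\lesssim\la 2^{s(n)-1-s_1}$, which yields $\lesssim\la\big(2^{-s_1}+d_n/D_n\big)\lesssim\la\big(2^{-s_1}+D_n^{\e_0-1}\big)\lesssim 2^{-\d s_1}\la$. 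Your alternative --- Young's inequality with $\|F_n^{\mathrm{near}}\|_{\ell^1}\lesssim d_n^{-1}$ --- needs $d_n^{\e_2}/D_n\lesssim d_n^{-1}$, i.e. $\e_0(1+\e_2)\le1$, which is not among the hypotheses of Theorem \ref{thm:3CZ} and fails in the intended application (there $d_n\simeq\vp(2^n)$, $D_n\simeq2^n$, $\e_2=1$ and $\e_0$ is close to $1$); and even granting it, it only covers the diagonal case, while for $s_1>s_2$ you explicitly leave the near--piece estimate as an unproved ``telescoping or Schur--type'' step. That step is precisely where \eqref{lem:4est1} would have to be proved, so the proposal is incomplete there --- and with the correct reading of \eqref{thm:3CZ3} the step is also unnecessary.

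A second, smaller but genuine, flaw is the error term: bounding the contribution of $E_n=K_n*\widetilde K_n-F_n$ through $\|E_n\|_{\ell^1}\|B^n_{s(n)-1-s_1}\|_{\ell^\8}\lesssim D_n^{-\e_1}\la d_n$ loses a factor $d_n$; since no relation between $\e_0$ and $\e_1$ is assumed (and in the application $\e_1=\chi$ is tiny while $d_n\approx D_n^{\g}$ with $\g$ close to $1$), this is not $\lesssim2^{-\d s_1}\la$. The fix is localization in space rather than the $\ell^{\8}$ bound on $B$: since $\supp E_n\subseteq[-D_n,D_n]$ and the cubes are disjoint with $\|b_{s,j}\|_{\ell^1(\Z)}\le\la|Q_{s,j}|$, one has $\sup_j\|B^n_{s(n)-1-s_1}\mathbf{1}_{[(j-1)2^{r_n},(j+2)2^{r_n})}\|_{\ell^1(\Z)}\lesssim\la D_n$, whence $\|E_n*B^n_{s(n)-1-s_1}\|_{\ell^{\8}(\Z)}\lesssim D_n^{-1-\e_1}\la D_n\lesssim\la2^{-\e_1 s_1}$, which is the bound the paper uses. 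Your far--piece computation coincides with the paper's argument and is fine, except that when $2^{s(n)-1-s_1}<d_n^{\e_2}$ the number of exceptional cubes near $x$ is of order $d_n^{\e_2}2^{-(s(n)-1-s_1)}$ rather than bounded; this is exactly where the factor $D_n^{\e_0-1}\lesssim2^{-(1-\e_0)s_1}$, and thus the hypothesis $d_n\le D_n^{\e_0}$ with $\e_0<1$, enters.
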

\begin{proof}
According to \eqref{thm:3CZ2} we have
\begin{align*}
  K_n*\widetilde{K}_n(x)=F_n(0)\d_0(x)+G_n(x)+E_n(x),
\end{align*}
where $E_n(x)=K_n*\widetilde{K}_n(x)-F_n(x)$ and $G_n(x)=F_n(x)-F_n(0)\d_0(x)$. Moreover, $\supp\ G_n\subseteq[-D_n, D_n]\subseteq[-2^{r_n}, 2^{r_n}]$
and $\supp\ E_n\subseteq[-D_n, D_n]\subseteq[-2^{r_n}, 2^{r_n}]$, where $r_n=\log_2(\lfloor D_n\rfloor+1)$. Therefore,  taking $Z_{j, n}=[j2^{r_n}, (j+1)2^{r_n})$, in view of \eqref{thm:3CZ2} and \eqref{thm:3CZ3}, we obtain for every $0\le s_2\le s_1\le s(n)-1$ that
\begin{multline}\label{lem:4p1}
  \big|\big\langle K_n*B_{s(n)-1-s_1}^n, K_n*B_{s(n)-1-s_2}^n\big\rangle_{\ell^2(\Z)}\big|
  =\big|\big\langle K_n*\widetilde{K}_n*B_{s(n)-1-s_1}^n, B_{s(n)-1-s_2}^n\big\rangle_{\ell^2(\Z)}\big|\\
  \lesssim
  F_n(0)\big|\big\langle \d_0*B_{s(n)-1-s_1}^n, B_{s(n)-1-s_2}^n\big\rangle_{\ell^2(\Z)}\big|
  +\big|\big\langle (G_n+E_n)*B_{s(n)-1-s_1}^n, B_{s(n)-1-s_2}^n\big\rangle_{\ell^2(\Z)}\big|\\
  \lesssim d_n^{-1}\big|\big\langle B_{s(n)-1-s_1}^n, B_{s(n)-1-s_2}^n\big\rangle_{\ell^2(\Z)}\big|\\
  +\sum_{j\in\Z}\sum_{x\in Z_{j, n}}\Big|\sum_{y\in\Z}G_n(y)B_{s(n)-1-s_1}^n(x-y)\mathbf{1}_{[(j-1)2^{r_n}, (j+2)2^{r_n})}(x-y)\Big|
  |B_{s(n)-1-s_2}^n(x)|\\
  +\sum_{j\in\Z}\sum_{x\in Z_{j, n}}\sum_{y\in\Z}|E_n(y)B_{s(n)-1-s_1}^n(x-y)\mathbf{1}_{[(j-1)2^{r_n}, (j+2)2^{r_n})}(x-y)||B_{s(n)-1-s_2}^n(x)|\\
  \lesssim d_n^{-1}\big|\big\langle B_{s(n)-1-s_1}^n, B_{s(n)-1-s_2}^n\big\rangle_{\ell^2(\Z)}\big|\\
  +\sup_{j\in\Z}\sup_{x\in Z_{j, n}}\big|G_n*\big(B_{s(n)-1-s_1}^n\mathbf{1}_{[(j-1)2^{r_n}, (j+2)2^{r_n})}\big)(x)\big|\|B_{s(n)-1-s_2}^n\|_{\ell^1(\Z)}\\
  +D_n^{-1-\e_1}\sup_{j\in\Z}\|B_{s(n)-1-s_1}^n\mathbf{1}_{[(j-1)2^{r_n}, (j+2)2^{r_n})}\|_{\ell^1(\Z)}\|B_{s(n)-1-s_2}^n\|_{\ell^1(\Z)}=I_1+I_2+I_3.
\end{multline}
Now it is easy to see that
\begin{align*}
  \big\langle B_{s(n)-1-s_1}^n, B_{s(n)-1-s_2}^n\big\rangle_{\ell^2(\Z)}=\left\{ \begin{array} {ll}
\|B_{s(n)-1-s}^n\|_{\ell^2(\Z)}^2, & \mbox{if $s=s_1=s_2$,}\\
\ \ \ \ \ \ \ \  0,& \mbox{if $s_1\not=s_2$,}
\end{array}
\right.
\end{align*}
since the supports of $B_{s(n)-1-s_1}^n, B_{s(n)-1-s_2}$ are disjoint for $s_1\not=s_2$. Therefore, it remains to estimate the last two summands $I_2, I_3$ in \eqref{lem:4p1}. In order to find an  upper bound for $I_2$ we will use the fact that $G_n(x)$ is slowly varying away from $0$ (see \eqref{thm:3CZ3} and \eqref{thm:3CZ4}). An upper bound for $I_3$ follows from the definition of $E_n(x)$ and \eqref{thm:3CZ2}.  Define $B_{s}^{n, k}=B_{s}^{n}\mathbf{1}_{Q_{s, k}}$ for every $k\in\Z$ and observe that $\sum_{x\in\Z}B_{s}^{n, k}(x)=0$ and
\begin{align}\label{lem:4p0}
  \|B_{s}^{n, k}\|_{\ell^1(\Z)}\le\|h_s^n\mathbf{1}_{Q_{s, k}}\|_{\ell^1(\Z)}
  +\|[h_s^n]_{Q_{s, k}}\mathbf{1}_{Q_{s, k}}\|_{\ell^1(\Z)}
  \lesssim \|b_{s, k}\|_{\ell^1(\Z)}\le\la|Q_{s, k}|.
\end{align}
This in turn implies that for every $\widetilde{C}>0$ and $0<a_1<a_2$ we have
\begin{multline}\label{lem:4p2}
  \|B_{s}^{n}\mathbf{1}_{[(\widetilde{C}k-a_1)2^{r_n}, (\widetilde{C}k+a_2)2^{r_n})}\|_{\ell^1(\Z)}\le
  \sum_{k\in\Z:\ Q_{s, k}\cap[(\widetilde{C}k-a_1)2^{r_n}, (\widetilde{C}k+a_2)2^{r_n})\not=\emptyset}\|B_{s}^{n, k}\|_{\ell^1(\Z)}\\
  \lesssim |\{k\in\Z:\ Q_{s, k}\cap[(\widetilde{C}k-a_1)2^{r_n}, (\widetilde{C}k+a_2)2^{r_n})\not=\emptyset\}|\cdot\la2^s
  \lesssim (a_2-a_1)2^{r_n}2^{-s}\la2^s\lesssim \la D_n.
\end{multline}
This  gives immediately an upper bound for $I_3$
\begin{align*}
  I_3&=D_n^{-1-\e_1}\sup_{j\in\Z}\|B_{s(n)-1-s_1}^n\mathbf{1}_{[(j-1)2^{r_n}, (j+2)2^{r_n})}\|_{\ell^1(\Z)}\|B_{s(n)-1-s_2}^n\|_{\ell^1(\Z)}\\
  &\lesssim
  D_n^{-1-\e_1}\la D_n\|B_{s(n)-1-s_2}^n\|_{\ell^1(\Z)}
  \lesssim 2^{-\e_1 s_1}\la\|B_{s(n)-1-s_2}^n\|_{\ell^1(\Z)},
\end{align*}
since $D_n^{-1}<2^{-s_1}$ due to the inequality $s_1\le s(n)-1$. The proof will be completed if we find an upper bound for $I_2$. First of all notice that
\begin{multline}\label{lem:4p3}
  \sup_{j\in\Z}\sup_{x\in Z_{j, n}}\big|G_n*\big(B_{s(n)-1-s_1}^n\mathbf{1}_{[(j-1)2^{r_n}, (j+2)2^{r_n})}\big)(x)\big|\\
  \le\sup_{j\in\Z}\sup_{x\in Z_{j, n}}\sum_{k\in\Z}\big|G_n*\big(B_{s(n)-1-s_1}^{n, k}\mathbf{1}_{[(j-1)2^{r_n}, (j+2)2^{r_n})}\big)(x)\big|.
\end{multline}
Furthermore,
\begin{align}\label{lem:4p4}
\sum_{x\in\Z}B_{s(n)-1-s_1}^{n, k}(x)\mathbf{1}_{[(j-1)2^{r_n}, (j+2)2^{r_n})}(x)=0.
\end{align}
This is trivial if $\supp\ B_{s(n)-1-s_1}^{n, k}\cap[(j-1)2^{r_n}, (j+2)2^{r_n})=\emptyset$. Consider, now the case when $Q_{s(n)-1-s_1, k}\cap[(j-1)2^{r_n}, (j+2)2^{r_n})\not=\emptyset$, then $Q_{s(n)-1-s_1, k}$ is contained in $[(j-1)2^{r_n}, (j+2)2^{r_n})$, since $s(n)-1-s_1\le r_n$ and the last interval is the sum of three dyadic sets of length $2^{r_n}$. Thus
\begin{multline*}
 \sum_{x\in\Z}B_{s(n)-1-s_1}^{n, k}(x)\mathbf{1}_{[(j-1)2^{r_n}, (j+1)2^{r_n})}(x)\\
 =\sum_{x\in\Z}h^n_{s(n)-1-s_1}\mathbf{1}_{Q_{s(n)-1-s_1, k}}(x)
 -[h_{s(n)-1-s_1}^n]_{Q_{s(n)-1-s_1, k}}\mathbf{1}_{Q_{s(n)-1-s_1, k}}(x)=0.
\end{multline*}
Fix $k, j\in\Z$ and let $x_{s(n)-1-s_1, k}$ be the center of the cube $Q_{s(n)-1-s_1, k}$ and take any
$x\in Z_{j, n}$ such that $|x-x_{s(n)-1-s_1, k}|\ge Cd_n^{\e_2}+C2^{s(n)-1-s_1}$ then using \eqref{lem:4p4} and \eqref{thm:3CZ4} we see
\begin{multline}\label{lem:4p5}
 \big|G_n*\big(B_{s(n)-1-s_1}^{n, k}\mathbf{1}_{[(j-1)2^{r_n}, (j+2)2^{r_n})}\big)(x)\big|\\
 =\Big|\sum_{y\in\Z}\big(G_n(x-y)-G_n(x-x_{s(n)-1-s_1,k})\big)B_{s(n)-1-s_1}^{n, k}(y)\mathbf{1}_{[(j-1)2^{r_n}, (j+2)2^{r_n})}(y)\Big|\\
 \lesssim\sum_{y\in\Z}\frac{|x_{s(n)-1-s_1, k}-y|}{D_n^2}|B_{s(n)-1-s_1}^{n, k}(y)\mathbf{1}_{[(j-1)2^{r_n}, (j+2)2^{r_n})}(y)|\\
 \lesssim
 \frac{2^{s(n)-1-s_1}}{D_n^2}\|B_{s(n)-1-s_1}^{n, k}\mathbf{1}_{[(j-1)2^{r_n}, (j+2)2^{r_n})}\|_{\ell^1(\Z)},
\end{multline}
since $|x_{s(n)-1-s_1, k}-y|\le 2^{s(n)-1-s_1}$ and
\begin{align*}
  |x-y|\ge |x-x_{s(n)-1-s_1, k}|-|x_{s(n)-1-s_1, k}-y|\ge Cd_n^{\e_2}+C2^{s(n)-1-s_1}-2^{s(n)-1-s_1}\gtrsim d_n^{\e_2}.
\end{align*}
On the other hand in view of \eqref{thm:3CZ3} we have for all $x\in\Z\setminus\{0\}$
\begin{multline}\label{lem:4p6}
 \big|G_n*\big(B_{s(n)-1-s_1}^{n, k}\mathbf{1}_{[(j-1)2^{r_n}, (j+2)2^{r_n})}\big)(x)\big|
 \lesssim D_n^{-1}\|B_{s(n)-1-s_1}^{n, k}\mathbf{1}_{[(j-1)2^{r_n}, (j+2)2^{r_n})}\|_{\ell^1(\Z)}.
\end{multline}
Now we can continue with estimating \eqref{lem:4p3}. Indeed, by \eqref{lem:4p5} and \eqref{lem:4p6}
\begin{multline}\label{lem:4p7}
  \sup_{j\in\Z}\sup_{x\in Z_{j, n}}\sum_{k\in\Z}\big|G_n*\big(B_{s(n)-1-s_1}^{n, k}\mathbf{1}_{[(j-1)2^{r_n}, (j+2)2^{r_n})}\big)(x)\big|\\
  \lesssim\sup_{j\in\Z}\sup_{x\in Z_{j, n}}\sum_{k\in\Z:\ |x-x_{s(n)-1-s_1, k}|< Cd_n^{\e_2}+C2^{s(n)-1-s_1}}
  D_n^{-1}\|B_{s(n)-1-s_1}^{n, k}\mathbf{1}_{[(j-1)2^{r_n}, (j+2)2^{r_n})}\|_{\ell^1(\Z)}\\
  +\sup_{j\in\Z}\sup_{x\in Z_{j, n}}\sum_{k\in\Z:\ |x-x_{s(n)-1-s_1, k}|\ge Cd_n^{\e_2}+C2^{s(n)-1-s_1}}
  \frac{2^{s(n)-1-s_1}}{D_n^2}\|B_{s(n)-1-s_1}^{n, k}\mathbf{1}_{[(j-1)2^{r_n}, (j+2)2^{r_n})}\|_{\ell^1(\Z)}.
\end{multline}
In order to estimate the first sum we need to consider two cases. Firstly, assume that $2^{s(n)-1-s_1}\le d_n^{\e_2}$, then any ball with radius $\lesssim d_n^{\e_2}$ contains at most $d_n^{\e_2}2^{-(s(n)-1-s_1)}$
cubes of the form $Q_{s(n)-1-s_1, k}$. Thus by \eqref{lem:4p0} we obtain
\begin{align*}
  \sup_{j\in\Z}\sup_{x\in Z_{j, n}}\sum_{k\in\Z:\ |x-x_{s(n)-1-s_1, k}|< Cd_n^{\e_2}+C2^{s(n)-1-s_1}}
  D_n^{-1}\|B_{s(n)-1-s_1}^{n, k}\mathbf{1}_{[(j-1)2^{r_n}, (j+2)2^{r_n})}\|_{\ell^1(\Z)}\\
  \lesssim \sup_{j\in\Z}\sup_{x\in Z_{j, n}}|\{k\in\Z:\ |x-x_{s(n)-1-s_1, k}|< Cd_n^{\e_2}\}|
  D_n^{-1}\la 2^{s(n)-1-s_1}\\
  \lesssim d_n^{\e_2}2^{-(s(n)-1-s_1)}D_n^{-1}\la 2^{s(n)-1-s_1}\le
  \frac{\la d_n}{D_n^{\e_0}D_n^{1-\e_0}}\lesssim\frac{\la}{2^{(1-\e_0)s_1}}.
\end{align*}
Secondly, assume that $d_n^{\e_2}\le 2^{s(n)-1-s_1}$, then any ball with radius $\lesssim 2^{s(n)-1-s_1}$ contains at most $C2^{s(n)-1-s_1}2^{-(s(n)-1-s_1)}=C$
cubes of the form $Q_{s(n)-1-s_1, k}$. Thus by \eqref{lem:4p0} we obtain
\begin{align*}
  \sup_{j\in\Z}\sup_{x\in Z_{j, n}}\sum_{k\in\Z:\ |x-x_{s(n)-1-s_1, k}|< Cd_n^{\e_2}+C2^{s(n)-1-s_1}}
  D_n^{-1}\|B_{s(n)-1-s_1}^{n, k}\mathbf{1}_{[(j-1)2^{r_n}, (j+2)2^{r_n})}\|_{\ell^1(\Z)}\\
  \lesssim \sup_{j\in\Z}\sup_{x\in Z_{j, n}}|\{k\in\Z:\ |x-x_{s(n)-1-s_1, k}|< C2^{s(n)-1-s_1}\}|
  D_n^{-1}\la 2^{s(n)-1-s_1}\\
  \lesssim D_n^{-1}\la 2^{s(n)-1-s_1}\lesssim\la 2^{-s_1}.
\end{align*}
Using \eqref{lem:4p2} we can easily estimate the second sum in \eqref{lem:4p7}. Namely,
\begin{align*}
 \sup_{j\in\Z}\sup_{x\in Z_{j, n}}\sum_{k\in\Z:\ |x-x_{s(n)-1-s_1, k}|\ge Cd_n^{\e_2}+C2^{s(n)-1-s_1}}
  \frac{2^{s(n)-1-s_1}}{D_n^2}\|B_{s(n)-1-s_1}^{n, k}\mathbf{1}_{[(j-1)2^{r_n}, (j+2)2^{r_n})}\|_{\ell^1(\Z)}\\
  \lesssim \frac{2^{-s_1}}{D_n}\sup_{j\in\Z}\sum_{k\in\Z}
  \|B_{s(n)-1-s_1}^{n}\mathbf{1}_{Q_{s(n)-1-s_1,k}}\mathbf{1}_{[(j-1)2^{r_n}, (j+2)2^{r_n})}\|_{\ell^1(\Z)}\\
  \lesssim \frac{2^{-s_1}}{D_n}\sup_{j\in\Z}
  \|B_{s(n)-1-s_1}^{n}\mathbf{1}_{[(j-1)2^{r_n}, (j+2)2^{r_n})}\|_{\ell^1(\Z)}
  \lesssim\frac{2^{-s_1}}{D_n}\la D_n=2^{-s_1}\la.
\end{align*}
Finally, we obtain the upper bound for $I_2$
\begin{align*}
  I_2=\sup_{j\in\Z}\sup_{x\in Z_{j, n}}\big|G_n*\big(B_{s(n)-1-s_1}^n\mathbf{1}_{[(j-1)2^{r_n}, (j+2)2^{r_n})}\big)(x)\big|\|B_{s(n)-1-s_2}^n\|_{\ell^1(\Z)}\\
  \lesssim2^{-\d s_1}\la\|B_{s(n)-1-s_2}^n\|_{\ell^1(\Z)},
\end{align*}
for some $\d>0$ and the proof of Lemma \ref{lem:4} is completed.
\end{proof}

\end{document}